\DeclareMathOperator{\im}{Im}
\DeclareMathOperator{\res}{res}
\DeclareMathOperator{\diam}{diam}
\def \St{\text{St}}
\def\R{\mathbbm{R}}
\def\P{\mathbbm{P}}
\def \X{\mathbb{X}}
\def \Y{\mathbb{Y}}
\def \F{\mathbb{F}}
\def\G{\mathbb{G}}
\def \V{\mathbb{V}}
\def \E{\mathbb{E}}
\def \sR{\mathscr{R}}
\def \sC{\mathscr{C}}
\def \sG{\mathscr{G}}
\def \sD{\mathscr{D}}
\def \sF{\mathscr{F}}
\def \cC{{\cal C}}
\def\cR{{\cal R}}
\def \cD{{\cal D}}
\def \cS{{\cal S}}
\def \cI{{\cal I}}
\def \cT{{\cal T}}
\def\cM{{\cal M}}
\def\cN{{\cal N}}
\def\cV{{\cal V}}
\def \cU{{\cal U}}
\def \cW{{\cal W}}
\def\limninf{\lim_{n\to\infty}}
\newcommand{\prob}[1]{\P\left(#1\right)}
\def \Set {{\mathbf{Set}}}
\def \Int {{\mathbf{Int}}}
\def \Csh{{\mathbf{Csh}}}
\def \Cshc{{\mathbf{Csh^c}}}
\def \Rgraph{{\R\text{-}\mathbf{graph}}}
\def \Rspace{{\R\text{-}\mathbf{space}}}
\def \Rspacec{{\R\text{-}\mathbf{space^c}}}
\def \inv{^{-1}}
\renewcommand{\epsilon}{\varepsilon}
\newcommand{\e}{\varepsilon}
\renewcommand{\phi}{\varphi}
\newtheorem{theorem}{Theorem}
\newtheorem*{theorem*}{Theorem}
\numberwithin{theorem}{section}
\newtheorem{corollary}[theorem]{Corollary}
\newtheorem{lemma}[theorem]{Lemma}
\newtheorem{definition}[theorem]{Definition}
\newtheorem{example}[theorem]{Example}
\newtheorem{remark}[theorem]{Remark}
\newtheorem{proposition}[theorem]{Proposition}
\newtheorem*{acknowledgements}{Acknowledgements}
\newcommand {\mm}[1] {\ifmmode{#1}\else{\mbox{\(#1\)}}\fi}
\newcommand{\para}[1]        {\vspace{2mm}\noindent{\textbf{#1}}}
\newcommand{\denselist}{\vspace{-5pt} \itemsep -2pt\parsep=-1pt\partopsep -2pt}
\newcommand{\myblue}[1]{{\textcolor{black}{#1}}}
\begin{document}

\title{Probabilistic Convergence and Stability of Random Mapper Graphs}

\author{Adam Brown\thanks{E-mail: adam.brown@ist.ac.at} \\ IST Austria
\and Omer Bobrowski \thanks{E-mail: omer@ee.technion.ac.il}\\ Technion - Israel Institute of Technology
\and Elizabeth Munch \thanks{E-mail: muncheli@msu.edu}\\ Michigan State University
\and Bei Wang\thanks{E-mail: beiwang@sci.utah.edu} \\ University of Utah}
\date{}

\maketitle

\begin{abstract}

We study the probabilistic convergence between the mapper graph and the Reeb graph of a topological space $\X$ equipped with a continuous function $f: \X \to \R$. 
We first give a categorification of the mapper graph and the Reeb graph  by interpreting them in terms of cosheaves and stratified covers of the real line $\R$. We then introduce a variant of the classic mapper graph of Singh et al.~(2007), referred to as the enhanced mapper graph, and demonstrate that such a construction approximates the Reeb graph of $(\X, f)$ when it is applied to points randomly sampled from a probability density function concentrated on $(\X, f)$. 

Our techniques are based on the interleaving distance of constructible cosheaves and topological estimation via kernel density estimates.
Following Munch and Wang (2018), we first show that the mapper graph of $(\X, f)$, a constructible $\R$-space (with a fixed open cover),  approximates the Reeb graph of the same space.
We then construct an isomorphism between the mapper of $(\X,f)$ to the mapper of a super-level set of a probability density function concentrated on $(\X, f)$.
Finally, building on the approach of Bobrowski et al.~(2017), we show that, with high probability, we can recover the mapper of the super-level set given a sufficiently large sample.
Our work is the first to consider the mapper construction using the theory of cosheaves in a probabilistic setting. It is part of an ongoing effort to combine sheaf theory, probability, and statistics, to support topological data analysis with random data.

\end{abstract}

\section{Introduction}

In recent years, topological data analysis has been gaining momentum in  aiding knowledge discovery of large and complex data.
A great deal of work has been focused on data modeled as scalar fields.
For instance, scientific simulations and imaging tools produce data in the form of point cloud samples equipped with scalar values, such as temperature, pressure and grayscale intensity.
One way to understand and characterize the structure of a scalar field $f: \X \to \R$ is through various forms of topological descriptors, which provide meaningful and compact abstraction of the data.
Popular topological descriptors can be classified into vector-based ones such as persistence diagrams~\citep{EdelsbrunnerLetscherZomorodian2002} and barcodes~\citep{Ghrist2008, CarlssonZomorodianCollins2004}, graph-based ones such as Reeb graphs~\citep{Reeb1946} and their variants merge trees~\citep{BeketayevYeliussizovMorozov2014} and contour trees~\citep{CarrSnoeyinkAxen2003}, and complex-based ones such as Morse complexes, Morse-Smale complexes~\citep{GerberPotter2012,EdelsbrunnerHarerZomorodian2003,EdelsbrunnerHarerNatarajan2003}, and the mapper construction~\citep{SinghMemoliCarlsson2007}.

For a topological space $\X$ equipped with a function $f: \X \to \R$, the \emph{Reeb graph}, denoted as $\cR(\X,f)$, encodes the connected components of the level sets $f^{-1}(a)$ for $a$ ranging over $\R$.
It summarizes the structure of the data, represented as a pair $(\X, f)$, by capturing the evolution of the topology of its level sets.
Research surrounding Reeb graphs and their variants has been very active in recent years, from theoretical, computational and applications aspects, see~\cite{BiasottiGiorgiSpagnuolo2008} for a survey.
In the multivariate setting, Reeb spaces~\citep{EdelsbrunnerHarerPatel2008} generalize Reeb graphs and serve as topological descriptors of multivariate functions $f:\X \to \R^d$.
The Reeb graph is then a special case of a Reeb space for $d = 1$.

One issue with Reeb spaces are their limited applicability to point cloud data.
To facilitate their practical usage, a closely related construction called \emph{mapper}~\citep{SinghMemoliCarlsson2007} was introduced to capture the topological structure of a pair $(\X, f)$ (where $f:\X \to \R^d$).
Given a topological space $\X$ equipped with a $\R^d$-valued function $f$, for the classic mapper construction, we work with a finite good cover $\mathcal{U} = \{U_\alpha\}_{\alpha \in A}$ of $f(\X)$ for some indexing set $A$, such that $f(\X) \subseteq \bigcup{U_{\alpha}}$. 
Let $f^*(\mathcal{U})$ denote the cover of $\X$ obtained by considering the path-connected components of $f^{-1}(U_\alpha)$ for each $\alpha$. 
The mapper construction of $(\X, f)$ is defined to be the nerve of $f^*(\mathcal{U})$, denoted as $\mathcal{N}_{f^*(\mathcal{U})}$, see Figure~\ref{fig:enhanced-mapper}(h) for an example. 
By definition, the \emph{mapper} is an abstract simplicial complex; and its 1-dimensional skeleton is referred to as the classic \emph{mapper graph} in this paper.

As a computable alternative to the Reeb space, the mapper has enjoyed tremendous success in data science, including cancer research~\citep{NicolauLevineCarlsson2011} and sports analytics~\citep{Alagappan2012}; it is also a cornerstone of several  data analytics companies such as Ayasdi and Alpine Data Labs.
Many variants have been studied in recent years.
The \emph{$\alpha$-Reeb graph}~\citep{ChazalSun2014} redefines the equivalence relation between points using open intervals of length at most $\alpha$.
The \emph{multiscale mapper}~\citep{DeyMemoliWang2016} studies a sequence of mapper constructions by varying the granularity of the cover.
The \emph{multinerve mapper}~\citep{CarriereOudot2018} computes the multinerve~\citep{VerdiereGinotGoaoc2012} of the connected cover.
The \emph{Joint Contour Net} (JCN)~\citep{CarrDuke2013, CarrDuke2014} introduces quantizations to the cover elements by rounding the function values.
The \emph{extended Reeb graph}~\citep{BarralBiasotti2014} uses cover elements from a partition of the domain without overlaps.

Although the mapper construction has been widely appreciated by the practitioners, our understanding of its theoretical properties remains fragmentary.
Some questions important in theory and in practice center around its structure and its relation to the Reeb graph.
\begin{itemize}
\item [Q1.] \textbf{Information content:} What information is encoded by the mapper? How much information can we recover about the original data from the mapper by solving an inverse problem?
\item [Q2.] \textbf{Stability:} What is the structural stability of the mapper with respect to perturbations of its function, domain and cover?
\item [Q3.] \textbf{Convergence:} What is an appropriate metric under which the mapper converges to the Reeb graph as the number of sampled points goes to infinity and the granularity of the cover goes to zero?
\end{itemize}
To the best of our knowledge, our work is the first to address convergence in \textbf{a probabilistic setting}.
Given a mapper construction applied to points randomly sampled from a probability density function, we prove an asymptotic result: as the number of points $n \to \infty$, the mapper graph construction approximates that of the Reeb graph up to the granularity of the cover with high probability.

\para{Information, stability and convergence.}
We discuss our work in the context of related literature in topological data analysis.
As many topological descriptors, the mapper summarizes the information from the original data through a \emph{lossy} process.
To quantify its information content, Dey et al.~\citep{DeyMemoliWang2017} studied the topological information encoded by Reeb spaces, mappers and multi-scale mappers, where 1-dimensional homology of the mapper was shown to be no richer than the domain $\X$ itself.
Carri\'{e}re and Oudot~\citep{CarriereOudot2018} characterized the information encoded in the mapper using the extended persistence diagram of its corresponding Reeb graph.
Gasparovic et. al.~\citep{GasparovicGommelPurvine2018} provided full descriptions of persistent homology information of a metric graph via its intrinsic \v{C}ech complex, a special type of nerve complex.
In this paper, we study the information content of the mapper via a (co)sheaf-theoretic approach; in particular, through the notion of \emph{display locale}, we introduce an intermediate object called the \emph{enhanced mapper graph}, that is, a CW complex with weighted 0-cells.
We show that the enhanced mapper graph reduces the information loss during  summarization and may be of independent interest.

In terms of stability, Carri\'{e}re and Oudot~\citep{CarriereOudot2018} derived stability for the mapper graph using the stability of extended persistence diagrams equipped with the bottleneck distance under Hausdorff or Wasserstein perturbations of the data~\citep{CohenSteinerEdelsbrunnerHarer2009}.
Our work is similar to~\citep{CarriereOudot2018} in a sense that we study the stability of the enhanced mapper graph with respect to perturbation of the data $(\X, f)$, where the local stability depends on how the cover $\cU$ is positioned in relation to the critical values of $f$. However, we formalize the structural stability of the enhanced mapper graph using a categorification of the mapper algorithm and the interleaving distance of constructible cosheaves.

When $f$ is a scalar field and the connected cover of its domain $\R$ consists of a collection of open intervals, the mapper construction is conjectured to recover  the Reeb graph precisely as the granularity of the cover goes to zero~\citep{SinghMemoliCarlsson2007}.
Babu~\citep{Babu2013} studied the above convergence using levelset zigzag persistence modules and showed that the mapper converges to the Reeb graph in the bottleneck distance.
Munch and Wang~\citep{MunchWang2016} characterized the mapper using constructible cosheaves and proved the convergence between the (classic) mapper and the Reeb space (for $d \geq 1$) in interleaving distance. The enhanced mapper graph defined in this paper is similar to the geometric mapper graph introduced in \citep{MunchWang2016}. The differences between the enhanced mapper graph and geometric mapper consist of technical changes in the geometric realization of each space as a quotient of a disjoint union of closed intervals. Proposition \ref{prop:displaylocale} implies that the enhanced mapper graph is isomorphic to the display locale of the mapper cosheaf, giving theoretic significance to the geometrically realizable enhanced mapper graph. 

\cite{DeyMemoliWang2017} established a convergence result between the mapper and the domain under a Gromov-Hausdorff metric.
Carri\'{e}re and Oudot~\citep{CarriereOudot2018} showed convergence between the (multinerve) mapper and the Reeb graph using the functional distortion distance~\citep{BauerGeWang2014}. The enhanced mapper graph we define plays a role roughly analogous to the multinerve mapper in~\citep{CarriereOudot2018}, although with several important distinctions. Most significantly is the fact that the enhanced mapper graph is an $\R$-space, and as such is not a purely combinatorial object, in contrast to the multinerve mapper, which is a simplicial poset. 
Carri\'{e}re et al.~\citep{CarriereMichelOudot2018} proved convergence and provided a confidence set for the mapper using a bottleneck distance on certain extended persistence diagrams. They showed that the mapper is an optimal estimator of the Reeb graph and provided a statistical method for automatic parameter tuning using the rate of convergence.
 Like~\cite{CarriereMichelOudot2018}, this paper studies a notion of consistency (detailed below) for the mapper algorithm. In contrast to~\cite{CarriereMichelOudot2018}, the results provided here use the Reeb distance on constructible $\R$-graphs (defined in Section \ref{sec:Background}) rather than bottleneck distances on extended persistence diagrams, and are applicable to more general topological spaces (i.e.,~we do not require $\X$ to be a smooth manifold).

\para{Probabilistic mapper inference.}
This work is part of an effort to harness the theory of probability and statistics  to support and analyze the use of topological methods with random data.
To date, most of this effort has been put into problems related to the homology and persistent homology of random point clouds. The problem of \emph{homological inference} relates to the ability to recover the homology (or persistent homology) of an unknown space or function given random observations. In a noiseless setup this problem was studied in \cite{NiyogiSmaleWeinberger2008, Bobrowski2019,ChazalGlisseLabruere2015,KergorlayTillmannVipond2019,WangWang2018}.
The noisy setup was studied in \cite{NiyogiSmaleWeinberger2011,BobrowskiMukherjeeTaylor2017,ChazalFasyLecci2017,FasyLecciRinaldo2014}.
Briefly, these works provide methods to recover the homology, together with assumptions that guarantee  correct recovery with high probability.
In many of these, the results are asymptotic, taking the number of points $n\to \infty$.
The main reason for taking limits, is that the mathematics become more tractable, and provide simpler and more intuitive statements.
Such asymptotic results can be considered as proofs of \emph{consistency} for such homology estimation procedures. \myblue{In Section \ref{sec:Model}, we apply results of \cite{BobrowskiMukherjeeTaylor2017} to study consistency of the enhanced mapper construction introduced in Section \ref{sec:Background}. The statistical techniques we use are similar to those developed in \cite{ChazalGuibasOudotSkraba2011}. For further discussion of the differences between the techniques used in Section \ref{sec:Model} and the results of \cite{ChazalGuibasOudotSkraba2011}, see \cite{BobrowskiMukherjeeTaylor2017}.}

In a way, the work here uses similar ideas to perform ``mapper inference", a type of  \emph{structural inference}, and proves consistency.
Other probabilistic studies related to applied topology  mainly include     limiting theorems (laws of large numbers, and central limit theorems), and extreme value analysis for the homology and persistent homology of random data (see e.g. \cite{YogeshwaranSubagAdler2016,HiraokaShiraiTrinh2018,OwadaAdler2017,BobrowskiKahleSkraba2017,KahleMeckes2013}). However, these are much more detailed quantitative statements than what we are looking for when working with the mapper construction.

\para{Contributions.}
We highlight four contributions of this paper. 
\begin{itemize}
\item First, in Section \ref{sec:reeb}, we introduce and construct an \emph{enhanced mapper graph}. This graph retains more geometric information about the underlying space than the combinatorially defined classic mapper graph, multinerve mapper graph, and geometric mapper graph (defined in~\cite{MunchWang2016}). Moreover, we show that the enhanced mapper graph construction provides a concrete realization of the display locale of a constructible cosheaf. 
\item Second, in Section \ref{sec:CategorifiedMapper}, we give a categorical interpretation of the mapper construction. This categorification allows us to view mapper construction as a functor from the category of cosheaves to the category of constructible cosheaves. We can recover a geometric realization of the mapper construction from the categorical realization by taking enhanced mapper graphs, i.e., the display locales, of the corresponding constructible cosheaves. 
\item Third, we prove convergence (Theorem \ref{thm:interleave}) and stability (Theorem \ref{thm:stability}) for the mapper cosheaf in the interleaving distance. 
\item Finally, we obtain results on the approximation quality of random mapper graphs obtained from noisy data on spaces which are not assumed to be manifolds (Theorem \ref{thm:main}). 
\end{itemize}
Moreover, using the results of~\cite{deSilvaMunchPatel2016}, each of our theorems are reinterpreted in terms of the geometrically-defined enhanced mapper graph and Reeb distance on $\R$-graphs. This reinterpretation allows us to state our main result below without referring to the machinery of cosheaf theory.

\noindent\textbf{Theorem} (Corollary \ref{cor:main}) \textit{
 Let $\cR(\X,f)$ be the Reeb graph of a constructible $\R$-space $(\X,f)$, $\hat{\mathfrak{D}}^\pi_n$ be the enhanced mapper graph associated to the cosheaf $\hat{\sD}^\pi_n$ defined in Section \ref{sec:MainResults}, and $d_R(\cdot,\cdot)$ be the Reeb distance defined in Section \ref{sec:Background}. Using the notation defined in Section \ref{sec:Model}, if there exists $\epsilon<\delta_\cU$ such that $p$ is $\epsilon$-concentrated on $\X$, then
  \[  \limninf\prob{d_R\big(\hat{\mathfrak{D}}^\pi_n,\cR(\X,f)\big)\le\text{res}_f\cU} =1.\]}
Intuitively speaking, the above theorem states that we can recover (a variant of) the mapper graph using the theory of cosheaves in a probabilistic setting. 
In particular,  with high probability, the distance between an enhanced mapper graph and the Reeb graph is upper bounded by the resolution of the cover (denoted as $\text{res}_f\cU$, see Definition \ref{def:resolution}) as the number of samples goes to infinity. The proof of the theorem relies on two preliminary results. First, in Theorem \ref{thm:interleave}, we construct an interleaving between the Reeb cosheaf and mapper cosheaf. Proposition \ref{prop:level2-box} is the second key ingredient of the proof, giving a probabilistic recovery of the mapper cosheaf from random points. By interpreting the enhanced mapper graph in terms of cosheaf theory, we are able to simplify many of the proofs for convergence and stability. Generally, this paper illustrates  the utility of combining sheaf theory with statistics in order to study  robust topological and geometric properties of data. 

\para{Pictorial overview.} 
To better illustrate our key constructions, we give an example of an enhanced mapper graph. 
As illustrated in Figure~\ref{fig:enhanced-mapper}, given a topological space equipped with a height function $(\X, f)$, we are interested in studying how well its classic mapper graph (h) (with a fixed cover) approximates its  Reeb graph (b). 
In order to study this problem, we construct a categorification of the mapper graph, through the theory of constructible cosheaves (d). The display locale functor is used to recover a geometric object from these category-theoretic constructible cosheaves. The geometric realization of the display locale of the mapper cosheaf is referred to as the enhanced mapper graph (g). We outline an explicit geometric realization of the enhanced mapper graph as a quotient of a disjoint union of closed intervals (f).  

The main result of the paper, Theorem \ref{thm:main}, gives (with high probability) a bound on the interleaving distance between the Reeb cosheaf and the enhanced mapper cosheaf.
In order to interpret this result in terms of probabilistic convergence (Corollary \ref{cor:main}), we apply the display locale functor to obtain the Reeb graph and the enhanced mapper graph from their cosheaf-theoretic analogues. This procedure results  (with high probability) in a bound on the Reeb distance between an enhanced mapper graph and the Reeb graph of a constructible $\R$-space with random data.  

\begin{figure}[!ht]
\begin{center}
\includegraphics[width=0.85\textwidth]{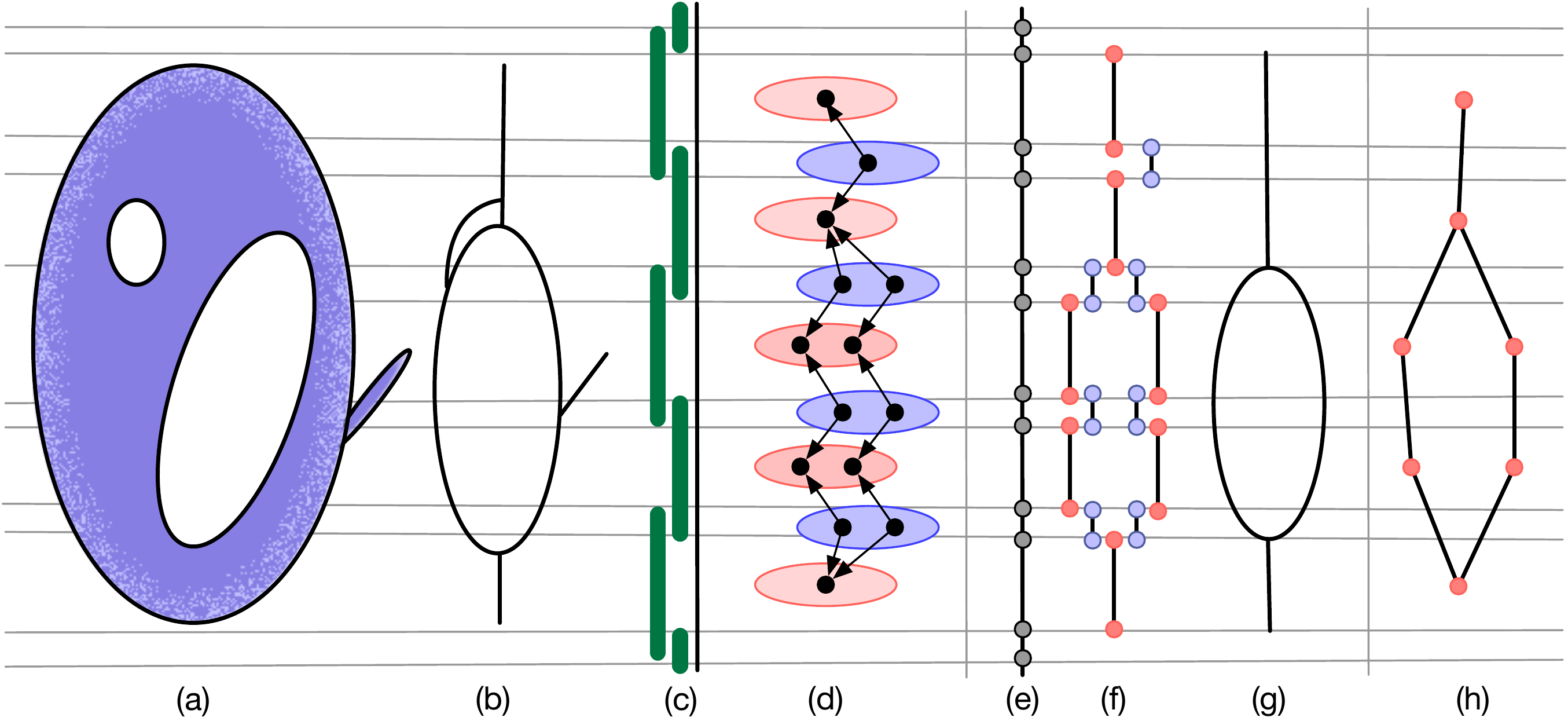} 
\caption{An example of an enhanced mapper graph. 
(a) An $\R$-space $(\X, f)$ given by a topological space $\X$ (in blue) equipped with a height function $f: \X \to \R$. 
(b) Reeb graph of $(\X, f)$. (c) Nice cover of $\R$ with open intervals. (d) Visualization of the mapper cosheaf. (e) Stratification of $\R$. (f) Disjoint union of closed intervals ($\widetilde{\mathfrak{D}}$, in the notation of Section \ref{sec:reeb}), with quotient isomorphic to the enhanced mapper graph. (g) Enhanced mapper graph ($\mathfrak{D}$, in the notation of Section \ref{sec:reeb}). (h) Classic mapper graph of $(\X, f)$.}
\label{fig:enhanced-mapper}
\end{center}
\end{figure}


\section{Background}
\label{sec:Background}

In this section, we review the results of~\cite{deSilvaMunchPatel2016} together with~\cite{MunchWang2016}, showing that the interleaving distance between the mapper of the constructible $\R$-space $(\X,f)$ relative to the open cover $\mathcal{U}$ of $\R$ and the Reeb graph of $(\X,f)$ is bounded by the resolution of the open cover. Motivated by the categorification of Reeb graphs in~\cite{deSilvaMunchPatel2016}, we introduce a categorified mapper algorithm, and restate the main results of~\cite{MunchWang2016} in this framework. 

Categorification, in this context, means that we are interested in using the theory of constructible cosheaves to study Reeb graphs and mapper graphs.
We can accomplish this by defining a cosheaf (the Reeb cosheaf) whose display locale is isomorphic to a given Reeb graph.
One goal (completed in \citep{deSilvaMunchPatel2016}) of this approach is to use cosheaf theory to define an extended metric on the category of Reeb graphs.
A natural candidate from the perspective of cosheaf theory is the interleaving distance.
Suppose we want to use the interleaving distance of cosheaves to determine if two Reeb graphs are homeomorphic.
We can first think of each Reeb graph as the display locale of a cosheaf, $\sF$ and $\sG$, respectively.
This allows us to rephrase our problem as that of determining if the cosheaves, $\sF$ and $ \sG$, are isomorphic.
In general, interleaving distances cannot answer this question, since the interleaving distance is an extended \emph{pseudo}-metric on the category of all cosheaves.
In other words, having interleaving distance equal to 0 is not enough to guarantee that $\sF$ and $\sG$ are isomorphic as cosheaves.
This seems to suggest that the interleaving distance is insufficient for the study of Reeb graphs.
However (due to results of \citep{deSilvaMunchPatel2016}), if we restrict our study to the category of constructible cosheaves (over $\R$), we can avoid this subtlety. The interleaving distance is in fact an extended \emph{metric} on the category of constructible cosheaves. If two constructible cosheaves have interleaving distance equal to 0, then they are isomorphic as cosheaves. Therefore, the display locales of constructible cosheaves (over $\R$) are homeomorphic if the interleaving distance between the cosheaves is equal to 0. In other words, if we want to know if two Reeb graphs are homeomorphic, it is sufficient to consider the interleaving distance between constructible cosheaves $\sF$ and $ \sG$, provided that the display locales of the constructible cosheaves recover the Reeb graphs. Therefore, in the remainder of this section, we  define a mapper cosheaf, and show that the Reeb cosheaf of a constructible $\R$-space is a constructible cosheaf, and that the mapper cosheaves are constructible. This allows us to use the commutativity of diagrams and the interleaving distance to prove convergence of the corresponding display locales, that is, the Reeb graphs and the enhanced mapper graphs. 
We use the example in Figure~\ref{fig:enhanced-mapper} as a reference for various notions.

\subsection{Constructible $\R$-spaces}
\label{sec:R_Spaces}
We begin by defining constructible $\R$-spaces, which we consider to be the underlying spaces for estimating the Reeb graphs, see Figure~\ref{fig:enhanced-mapper}. 
Constructible $\R$-spaces can be considered as a class of topological spaces which provide a natural setting for generalizing aspects of classical Morse theory to the study of singular spaces. Like smooth manifolds equipped with a Morse function, constructible $\R$-spaces are topological spaces equipped with a real valued function $f$, whose fibers, $f^{-1}(x)$, satisfy certain regularity conditions. Specifically, the topological structure of the fibers of the real valued function are required to only change at a finite set of function values. The function values which mark changes in the topological structure of fibers are referred to as critical values.  

\begin{definition}[\cite{deSilvaMunchPatel2016}]
An \emph{$\R$-space} is a pair $(\X,f)$, where $\X$ is a topological space and $f:\X\rightarrow \R$ is a continuous map.
A \emph{constructible $\R$-space} is an $\R$-space $(\X,f)$ satisfying the following conditions:
\begin{enumerate}\denselist
    \item There exists a finite increasing sequence of points $S=\{a_0,\cdots,a_n\}\subset\R $, two finite sets of locally path-connected spaces $\{\V_0,\cdots,\V_n\}$ and $\{\E_0,\cdots, \E_{n-1}\}$, and two sets of continuous maps $\{\ell_i:\E_i\rightarrow \V_i\}$ and $\{r_i:\E_i\rightarrow \V_{i+1}\}$, such that $\X$ is the quotient space of the disjoint union
    \[
    \coprod_{i=0}^n \V_i\times \{a_i\}\sqcup \coprod_{i=0}^{n-1}\E_i\times [a_i,a_{i+1}]
    \]
    by the relations
    \[
    (\ell_i(x),a_i)\sim (x,a_i)\text{  and  } (r_i(x),a_{i+1})\sim (x,a_{i+1})
    \]
    for all $i$ and $x\in \E_i$.

    \item The continuous function $f:\X\rightarrow \R$ is given by projection onto the second factor of $\X$.
\end{enumerate}

These are the objects of categories $\Rspace$ and $\Rspacec$, consisting of $\R$-spaces and constructible $\R$-spaces, respectively.
Morphisms in these categories are function-preserving maps; that is,~$\phi:(\X,f) \to (\Y,g)$ is given by a continuous map $\phi:\X \to \Y$ such that $g \circ \phi(x) = f(x)$.
\end{definition}

\begin{example}
A smooth compact manifold $\X$ with a Morse function $f$ constitutes a constructible $\R$-space. 
For instance, Figure~\ref{fig:enhanced-mapper}(a) illustrates a topological space $\X$ equipped with a height function $f$; the pair $(\X, f)$ is an $\R$-space. 
Similarly, a height function $f$ on a torus $\X$ gives rise to an $\R$-space $(\X, f)$ in Figure~\ref{fig:torus-enhanced-mapper}(a).
\end{example}
In fact, $\X$ is not required to be a manifold for $(\X, f)$ to be an $\R$-space. 
Throughout the remainder of this paper, we assume that $(\X, f)$ is a constructible $\R$-space.
\begin{definition}[\cite{deSilvaMunchPatel2016}]
An $\emph{$\R$-graph}$ is a constructible $\R$-space such that the sets $\V_i$ and $\E_i$ are finite sets (with the discrete topology) for all $i$.
\end{definition}
\begin{example}
The Reeb graph of a constructible $\R$-space is an $\R$-graph. For instance, the Reeb graph of $(\X, f)$ in Figure~\ref{fig:enhanced-mapper}(b) is an $\R$-graph. 
Similarly, the Reeb graph of a Morse function on a torus is an $\R$-graph, see Figure~\ref{fig:torus-enhanced-mapper}(b).
\end{example}

\subsection{Constructible cosheaves}
\label{ssec:ConstructibleCosheaves}

Sheaves and cosheaves are category-theoretic structures, called functors, which provide a framework for associating data to open sets in a topological space. These associations are required to preserve certain properties inherent to the topology of the space. In this way, one can study the topological structure of the space by studying the data associated to each open set by a given sheaf or cosheaf. In the following sections, we will use cosheaves to encode information about a constructible $\R$-space by associating open intervals in the real line to sets of (path-)connected components of fibers of the real valued function corresponding to the constructible $\R$-space. 

Let $\Int$ be the category of connected open sets in $\R$ with inclusions which we refer to as intervals, and $\Set$ the category of abelian groups with group homomorphism maps. We first define a cosheaf over $\R$, which we propose to be the natural objects for categorifying the mapper algorithm. 

\begin{definition}
A \emph{pre-cosheaf} $\sF$ on $\R$ is a covariant functor $\sF: \Int \to \Set$.
The category of precosheaves on $\R$ is denoted $\Set^\Int$ with morphisms given by natural transformations.

A pre-cosheaf $\sF$ is a \emph{cosheaf} if
\[
\varinjlim_{V\in\cV}\sF(V) = \sF(U)
\]
for each open interval $U \in \Int$ and each open interval cover $\cV\subset \Int$ of $U$, which is closed under finite intersections.
The full subcategory of $\Set^\Int$ consisting of cosheaves is denoted $\Csh$.
\end{definition}

\begin{remark}
\myblue{
We note that usually, cosheaves are defined over the category of arbitrary open sets rather than the category of connected open sets. 
However, the category of cosheaves defined over connected open sets is equivalent to the category of cosheaves defined over arbitrary open sets, by the colimit property of cosheaves. 
When we define smoothing operations on cosheaves in Section \ref{ssec-interleavings}, there are important distinctions that will make clear the need for the definition with respect to $\Int$, as set-thickening operations do not preserve the cosheaf property otherwise.
}
\end{remark}

Since we are interested in working with cosheaves which can be described with a finite amount of data, we will restrict our attention to a well-behaved subcategory of $\Csh$, consisting of constructible cosheaves (defined below). Constructibility can be thought of as a type of ``tameness" assumption for sheaves and cosheaves. 

\begin{definition}
A cosheaf $\sF$ is \emph{constructible} if there exists a finite set $S\subset \R$ of \emph{critical values} such that $\sF[U\subset V]$ is an isomorphism whenever $S\cap U = S\cap V$.
The full subcategory of $\Csh$ consisting of constructible cosheaves is denoted $\Cshc$.
\end{definition}

\subsection{The Reeb cosheaf and display locale functors}
\label{sec:reeb}
We introduce the Reeb cosheaf and display locale functors. These functors relate the category of constructible cosheaves to the category of $\R$-graphs, and provide a natural categorification of the Reeb graph~\citep{deSilvaMunchPatel2016}. In other words, via both Reeb cosheaf functor and display locale functors, one could consider the translation between the data and their corresponding categorical interpretations.

Let $\sR_f$ be the \emph{Reeb cosheaf} of $(\X,f)$ on $\R$, defined by
$$
\sR_f(U)=\pi_0(\X^U),
$$
where $\X^U :=  f^{-1}(U)$ and $\pi_0(\X^U)$ denotes the set of path components of $\X^U$.  
\begin{definition}
The \emph{Reeb cosheaf functor} $\cC$ from the category of constructible $\R$-spaces to the category of constructible cosheaves
\begin{equation*}
  \begin{tikzcd}
\Rspacec \ar[r, "\cC"]& \Cshc
  \end{tikzcd}
\end{equation*}
is defined by $\cC((\X,f))=\sR_f$.
For a function-preserving map $\phi:(\X,f) \to (\Y,g)$, the resulting morphism $\cC[\phi]$
is given by $\cC[\phi]: \sR_f(U) = \pi_0 \circ f\inv(U) \to \pi_0 \circ g\inv(U)= \sR_g(U)$ induced by $\phi \circ f\inv(U) \subseteq g\inv(U)$.

\end{definition}

\begin{definition}\label{defn:costalk}
The \emph{costalk} of a (pre-)cosheaf $\sF$ at $x\in \R$ is\myblue{
$$
\sF_x=\varprojlim_{I\ni x}\sF(I).
$$
For each costalk $\sF_x$, there is a natural map $\sF_x\rightarrow \sF(I)$ (given by the universal property of limits)} for each open interval $I$ containing $x$.
\end{definition}
In order to related the Reeb and mapper cosheaves to geometric objects, we  make use of the notion of \emph{display locale}, introduced in~\citep{Funk1995}.
\begin{definition}
The \emph{display locale} of a cosheaf $\sF$ (as a set) is defined as
$$
\mathcal{D}(\sF)=\coprod_{x\in \R}\sF_x.
$$
A topology on $\cD(\sF)$ is generated by open sets of the form
$$
U_{I,a}=\{s\in\sF_x:x\in I \text{ and }s\mapsto a\in \sF(I)\},
$$
for each open interval $I\in \Int$ and each section $a\in\sF(I)$.
\end{definition}
The display locale gives a functor from the category of cosheaves to the category of $\R$-graphs,
\begin{equation*}
  \begin{tikzcd}
  \Cshc \ar[r,"\cD"]  & \Rgraph.
  \end{tikzcd}
\end{equation*}

We proceed by giving an explicit geometric realization of the display locale of a constructible cosheaf. Let $\sF$ be a constructible cosheaf with set of critical values $\R_0 \subset \R$. Let $\R_1 = \R \setminus \R_0$ be the complement of $\R_0$, so that we form a stratification
\[\R=\R_0\sqcup \R_1,\] 
See Figure~\ref{fig:enhanced-mapper}(e) for an example (black points are in $\R_0$, their complements are in $\R_1$). 
Let $S_1$ be the set of connected components of $\R_1$, i.e., the 1-dimensional stratum pieces. For $x\in\R_0$, let $I_x$ denote the largest open interval containing $x$ such that $I_{x}\cap \R_0= \{x\}$. Let
\[\tilde{\mathfrak{D}}(\sF):=\coprod_{V\in S_1}\overline{V}\times \sF(V)\sqcup\coprod_{x\in \R_0}\{x\}\times \sF(I_x) ,\]
where $\overline{V}$ is the closure of $V$ and the product $C\times\emptyset$ of a set $C$ with the empty set is understood to be empty. Geometrically, $\tilde{\mathfrak{D}}(\sF)$ is a disjoint union of connected closed subsets of $\R$; if the support of $\sF$ is compact, then $\tilde{\mathfrak{D}}(\sF)$ is a disjoint union of closed intervals and points. Let $\pi$ denote the projection map
\begin{eqnarray*}
\pi:\tilde{\mathfrak{D}}(\sF)&\rightarrow& \R\\
(x,a)&\mapsto & x.
\end{eqnarray*}
Suppose $(x,a)\in\overline{V}\times \sF(V)\subset \tilde{\mathfrak{D}}(\sF)$ and $x\in\R_0$. We have that $V\cap \R_0=\emptyset$ and $I_{x}\cap V\neq \emptyset$ (because $x$ lies on the boundary of $V$). By maximality of $I_{x}$, we have the inclusion $V\subset I_{x}$. Let $\varphi_{(x,a)}$ be the map
\begin{eqnarray*}
\varphi_{(x,a)}:\sF(V)&\rightarrow& \sF(I_{x})
\end{eqnarray*}
induced by the inclusion $V\subset I_{x}$. We can extend this map to the fiber of $\pi$ over $x$,
\begin{eqnarray*}
\psi_{x}:\pi^{-1}(x)&\rightarrow& \sF(I_{x}),
\end{eqnarray*}
where $\psi_x((x,a)):=\phi_{(x,a)}(a)$ if $(x,a)\in \overline{V}\times \sF(V)$ and $\psi_x((x,a)):= a$ if $(x,a)\in \{x\}\times \sF(I_x)$. Finally, we define an equivalence relation of points in $\tilde{\mathfrak{D}}(\sF)$. Suppose $(x,a),(y,b)\in\tilde{\mathfrak{D}}(\sF)$. Then $(x,a)\sim(y,b)$ if
\begin{enumerate}\denselist
    \item $x=y\in\R_0$, and
    \item $\psi_x(a)=\psi_x(b)\in \sF(I_x)$.
\end{enumerate}
Finally, let
\[
\mathfrak{D}(\sF): = \tilde{\mathfrak{D}}(\sF)/\sim
\]
be the quotient of $\tilde{\mathfrak{D}}(\sF)$ by the equivalence relation. The projection $\pi$ factors through the quotient, giving a map $\bar{\pi}:\mathfrak{D}(\sF)\rightarrow\R$.
\begin{proposition}\label{prop:displaylocale}
If $\sF$ is a constructible cosheaf with set of critical values $S$, then $\mathfrak{D}(\sF)$ is a 1-dimensional CW-complex which is isomorphic (as an $\R$-space) to the display locale, $\cD(\sF)$, of $\sF$.
\end{proposition}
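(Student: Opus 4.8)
The plan is to establish the isomorphism in two stages: first, verify that $\mathfrak{D}(\sF)$ is a 1-dimensional CW-complex, and second, construct an explicit homeomorphism $\mathfrak{D}(\sF) \to \cD(\sF)$ that commutes with the projections to $\R$. For the CW structure, I would take the $0$-cells to be the images in the quotient of the points $\{x\}\times \sF(I_x)$ for $x \in \R_0$ (finitely many, since $S$ is finite and $\sF$ is constructible so each $\sF(I_x)$ is a finite set away from the support boundary — here one uses that $\sF$ is the kind of cosheaf whose sections are sets, and constructibility bounds the combinatorics), together with the images of the endpoints of the closed intervals $\overline{V}$ for the unbounded components if the support is not compact. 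The $1$-cells are the images of $V \times \{s\}$ for $V \in S_1$ and $s \in \sF(V)$; each such image is an arc whose two endpoints are glued (via $\psi_x$) to $0$-cells lying over the two boundary points of $V$. The attaching maps are exactly $\varphi_{(x,a)}$ composed with the quotient, and since on the interior of each $\overline{V}\times\{s\}$ the equivalence relation is trivial (condition (1) forces $x \in \R_0$), each $1$-cell is embedded on its interior. This gives a bona fide CW structure; constructibility guarantees local finiteness.

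For the homeomorphism, I would define $\Theta: \mathfrak{D}(\sF) \to \cD(\sF) = \coprod_{x}\sF_x$ by sending a point over $x \in \R_1$, represented by $(x,s)$ with $s \in \sF(V)$ and $x \in V$, to the image of $s$ under the natural map $\sF(V) \to \sF_x = \varprojlim_{I \ni x}\sF(I)$; note that for $x$ in the open stratum, $V$ is the unique connected component of $\R_1$ containing $x$ and the costalk is canonically identified with $\sF(V)$ itself (any smaller interval around $x$ avoiding $\R_0$ maps isomorphically, by constructibility). For $x \in \R_0$, a point of the fiber $\bar\pi^{-1}(x)$ is, by the definition of $\sim$, determined precisely by its image $\psi_x(a) \in \sF(I_x)$, and I send it to that element viewed in $\sF_x = \varprojlim_{I \ni x}\sF(I)$; one checks the limit over intervals containing $x$ is computed by $I_x$ up to the left/right maps, so $\sF(I_x)$ surjects onto $\sF_x$ and the relation $\sim$ is exactly the kernel pair of this surjection. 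Thus $\Theta$ is a well-defined bijection, and it is a map over $\R$ by construction.

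The main obstacle I anticipate is checking that $\Theta$ is a homeomorphism, i.e., matching the quotient topology on $\mathfrak{D}(\sF)$ with the topology on $\cD(\sF)$ generated by the basic opens $U_{I,a} = \{s \in \sF_x : x \in I,\ s \mapsto a \in \sF(I)\}$. I would argue continuity of $\Theta$ by pulling back $U_{I,a}$: its preimage should be the union, over components $V \subseteq I$ and elements $s \in \sF(V)$ mapping to $a \in \sF(I)$, of the images of $V \times \{s\}$ (plus the relevant glued endpoints over $I \cap \R_0$), which is open in the quotient because its preimage in $\tilde{\mathfrak{D}}(\sF)$ is a union of relatively open subsets of the closed intervals $\overline V$. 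For the inverse direction, I would show $\Theta$ is open (equivalently closed, using that on compact support $\mathfrak{D}(\sF)$ is compact and $\cD(\sF)$ Hausdorff — the Hausdorff property of display locales is standard, or can be checked directly), by verifying that the image of a basic open of the quotient is a union of $U_{I,a}$'s. The bookkeeping at the critical points, where several $1$-cells and the $0$-cell $\sF(I_x)$ meet and the maps $\ell_i, r_i$ (here incarnated as $\varphi_{(x,a)}$ from the left and right components of $\R_1$ adjacent to $x$) must be reconciled with the limit defining $\sF_x$, is where the argument requires care; everything else is routine diagram-chasing with colimits and limits of finite sets.
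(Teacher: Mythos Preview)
Your approach is essentially the paper's: build a fiberwise bijection by identifying each costalk $\sF_x$ with $\sF(V)$ (for $x\in\R_1$) or with $\sF(I_x)$ (for $x\in\R_0$) via constructibility, and then check continuity against the basic opens $U_{I,a}$. One minor slip worth fixing: the costalk $\sF_x=\varprojlim_{I\ni x}\sF(I)$ is an \emph{inverse} limit, so the natural maps go $\sF_x\to\sF(I)$, not the other way --- your ``natural map $\sF(V)\to\sF_x$'' and ``$\sF(I_x)$ surjects onto $\sF_x$'' have the arrows reversed, though this is harmless here since constructibility forces these to be isomorphisms (in particular $\sF_x\cong\sF(I_x)$ outright, with no nontrivial kernel pair; the relation $\sim$ collapses $\pi^{-1}(x)$ onto $\sF(I_x)$ via $\psi_x$, not via a quotient of $\sF(I_x)$). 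Your treatment of the inverse homeomorphism through compactness and Hausdorffness is in fact more explicit than the paper's argument, which checks only that $\gamma$ is continuous before asserting it is a homeomorphism.
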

\begin{proof}
We will construct a homeomorphism $\gamma:\mathfrak{D}(\sF)\rightarrow \cD(\sF)$ which preserves the natural quotient maps $\bar{f}:\cD(\sF)\rightarrow \R$ and $\bar{\pi}:\mathfrak{D}(\sF)\rightarrow \R$. Given $x\in\R_1$, we have that $\bar{\pi}^{-1}(x)= \{x\}\times\sF(V)$, where $V$ is the connected component of $\R_1$ which contains $x$. Since $\sF$ is constructible with respect to the chosen stratification, we have that $\sF(V)\cong\sF_x$. This gives a bijection from $\bar{\pi}^{-1}(x)$ to $\bar{f}^{-1}(x)$. For $x\in\R_0$, the fiber $\bar{\pi}^{-1}(x)$ is by construction in bijection with $ \sF(I_x)$. Again, since $\sF$ is constructible and $I_x\cap \R_0 = B(x)\cap \R_0$ for each sufficiently small neighborhood $B(x)$ of $x$, we have that $\sF(I_x)\cong \sF_x$. These bijections define a map $\gamma:\mathfrak{D}(\sF)\rightarrow \cD(\sF)$, which preserves the quotient maps by construction. All that remains is to show that $\gamma$ is continuous. 

Suppose $x\in\R_1$, and let $V$ be the connected component of $\R_1$ which contains $x$, and $B(x)$ be an open neighborhood of $x$ such that $B(x)\subset V$. Then $\sF_y\cong \sF(V)$ for each $y\in B(x)$, and $\sF(B(x))\cong \sF(V)$. Recall the definition of the basic open sets $U_{I,a}$ in the definition of display locale (with notation adjusted to better align with the current proof),
$$
U_{I,a}=\left\{s\in\sF_y\subset \coprod_{x\in \R}\sF_x:y\in I \text{ and }s\mapsto a\in \sF(I)\right\}.
$$
Using the above isomorphisms to simplify the definition according to the current set-up, we get 
\begin{eqnarray*}
U_{B(x),a}&\cong&\left\{a\in\coprod_{y\in B(x)} \sF(V)\right\}.
\end{eqnarray*}
Therefore, $\gamma^{-1}(U_{B(x),a})=B(x)\times \{a\}$, which is open in the quotient topology on $\mathfrak{D}(\sF)$. 

Suppose $x\in \R_0$, and let $B(x)$ be a neighborhood of $x$ such that $B(x)\subset I_x$. Let $V_1$ and $V_2$ denote the two connected components of $\R_1$ which are contained in $I_x$. If $y\in B(x)$, then $\sF_y$ is isomorphic to either $\sF(V_1)$, $\sF(V_2)$, or $\sF(I_x)$. Moreover, since $\sF$ is constructible, we have that $\sF(B(x))\cong \sF(I_x)$. Let $a'\in \sF(I_x)$ correspond to $a\in\sF(B(x))$ under the isomorphism $\sF(I_x)\cong \sF(B(x))$. Following the definitions, we have that 
\begin{align*}
& \pi^{-1}\left(\gamma^{-1}(U_{B(x),a})\right)  = \\
& \left(\overline{V_1}\cap B(x)\right)\times \sF[V_1\subset I_x]^{-1}(a') \sqcup \left( \overline{V_2}\cap B(x)\right)\times \sF[V_2\subset I_x]^{-1}(a') \sqcup \{x\}\times \{a'\},
\end{align*}
where $\sF[V_i\subset I_x]^{-1}(a')$ is understood to be a (possibly empty) subset of $\sF(V_i)$. It follows that $\gamma^{-1}(U_{B(x),a})$ is open in the quotient topology on $\mathfrak{D}(\sF)$. Therefore, $\gamma^{-1}$ maps open sets to open sets, and we have shown that $\gamma$ is a homeomorphism which preserves the quotient maps $\bar{f}$ and $\bar{\pi}$, i.e., $\bar{f}(\gamma((x,a)))=\bar{\pi}((x,a))=x$. 
\qed
\end{proof}
It follows from the proposition that $\mathfrak{D}(\sF)$ is independent (up to isomorphism) of choice of critical values $\R_0$. Additionally, we now note that we can freely use the notation $\mathfrak{D}(\sF)$ or $\cD(\sF)$ to refer to the display locale of a constructible cosheaf over $\R$. 
We will continue to use both symbols, reserving $\cD$ for the display locale of an arbitrary cosheaf, and using $\mathfrak{D}$ when we want to emphasize the above equivalence for constructible cosheaves.

In \citep{deSilvaMunchPatel2016}, it is shown that the Reeb graph $\cR(\X,f)$ of $(\X,f)$ is naturally isomorphic to the display locale of $\sR_f$. Moreover, the display locale functor $\mathcal{D}$ and the Reeb functor $\cC$ are inverse functors and define an equivalence of categories between the category of Reeb graphs and the category of constructible cosheaves on $\R$. This equivalence is closely connected to the more general relationships between constructible cosheaves and stratified coverings studied in~\citep{Woolf2009}. The result allows us to define a distance between Reeb graphs by taking the interleaving distance between the associated constructible cosheaves as shown in the following section.

\subsection{Interleavings}
\label{ssec-interleavings}

We start by defining the interleavings on the categorical objects. Interleaving is a typical tool in topological data analysis for quantifying proximity between objects such as persistence modules and cosheaves.
For $U \subseteq \R$, let $U \mapsto U_\e := \{ y \in \R \mid \|y-U\| \leq \e\}$.
If $U = (a,b) \in \Int$, then $U_\e = (a-\e, b+\e)$.
\begin{definition}
Let $\sF$ and $\sG$ be two cosheaves on $\R$.
An \emph{$\varepsilon$-interleaving} between $\sF$ and $\sG$ is given by two families of maps
\[
\varphi_U:\sF(U)\rightarrow\sG(U_\varepsilon),\quad \psi_U:\sG(U)\rightarrow \sF(U_\varepsilon)
\]
which are natural with respect to the inclusion $U\subset U_\varepsilon$, and such that
\[
\psi_{U_\varepsilon}\circ\varphi_U = \sF[U\subset U_{2\varepsilon}],\quad \varphi_{U_\varepsilon}\circ\psi_U=\sG[U\subset U_{2\varepsilon}]
\]
for all open intervals $U\subset \R$.
Equivalently, we require that the diagram
\begin{equation*}
  \begin{tikzcd}
  \sF(U)
    \ar[r]
    \ar[dr, "\phi_U", very near start, outer sep = -2pt]
  & \sF(U_\e)
    \ar[r]
    \ar[dr, "\phi_{U_\e}", very near start, outer sep = -2pt]
  & \sF(U_{2\e})
  \\
  \sG(U)
    \ar[r]
    \ar[ur, crossing over, "\psi_U"', very near start, outer sep = -2pt]
  & \sG(U_\e)
    \ar[r]
    \ar[ur, crossing over, "\psi_{U_\e}"', very near start, outer sep = -2pt]
  & \sG(U_{2\e})
  \end{tikzcd}
\end{equation*}
commutes, where the horizontal arrows are induced by $U \subseteq U_\e \subseteq U_{2\e}$.

The \emph{interleaving distance} between two cosheaves $\sF$ and $\sG$ is given by
\[
d_I(\sF,\sG):=\inf\{\varepsilon\mid \text{ there exists an $\e$-interleaving between $\sF$ and $\sG$}\}.
\]
\end{definition}


Now that we have an interleaving for elements of $\Cshc$ along with an equivalence of categories between $\Cshc$ and $\Rgraph$, we can develop this into an interleaving distance for the Reeb graphs themselves. The interleaving distance for Reeb graphs will be defined using a smoothing functor, which we construct below. 
\begin{definition}
Let $(\X,f)$ be a constructible $\R$-space.
For $\varepsilon\ge 0 $, define the \emph{thickening functor} $\cT_\varepsilon$ to be
\[
\cT_\varepsilon(\X,f)=(\X\times [-\varepsilon,\varepsilon],f_\varepsilon),
\]
where $f_\varepsilon(x,t)=f(x)+t$. 
Given a morphism $\alpha:\X\rightarrow \Y$,
\begin{eqnarray*}
\cT_\varepsilon(\alpha):\X\times [-\varepsilon,\varepsilon]&\rightarrow & \Y\times [-\varepsilon,\varepsilon]\\
(x,t)&\mapsto& (\alpha(x),t).
\end{eqnarray*}
The \emph{zero section map} is the morphism $(\X,f)\rightarrow \cT_\varepsilon(\X,f)$ induced by
\begin{eqnarray*}
\X&\rightarrow & \X\times [-\varepsilon,\varepsilon]\\
x&\mapsto & (x,0).
\end{eqnarray*}
\end{definition}

\begin{proposition}[{\cite[Proposition~4.23]{deSilvaMunchPatel2016}}]
 The thickening functor $\cT_\e$ maps $\R$-graphs to constructible $\R$-spaces, i.e., if $(\G,g)\in \R \text{-\bf{{graphs}}}$ then $\cT_\varepsilon(\G,g)\in\R \text{-\bf{{spaces}}}^{\text{\bf{{c}}}}$.
\end{proposition}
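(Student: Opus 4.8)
The plan is to exhibit a concrete constructible structure on $\cT_\varepsilon(\G,g) = (\G\times[-\varepsilon,\varepsilon],\, g_\varepsilon)$, where $g_\varepsilon(x,t)=g(x)+t$. Suppose $(\G,g)$ is realized, as in the definition of constructible $\R$-space, from a finite increasing sequence $S=\{a_0,\dots,a_n\}$, finite discrete sets $\V_0,\dots,\V_n$, $\E_0,\dots,\E_{n-1}$, and attaching maps $\ell_i,r_i$. I would take as candidate critical set the sorted union $S' = \{\, a_i-\varepsilon,\ a_i+\varepsilon : 0\le i\le n\,\}$, which is finite; write $S'=\{b_0<b_1<\dots<b_m\}$. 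The starting observation is that the fibre $g_\varepsilon^{-1}(c)$ is homeomorphic, via $(x,t)\mapsto x$, to $g^{-1}([c-\varepsilon,c+\varepsilon])$; since $g$ is affine (in fact monotone) on each closed cell of $\G$ and $\G$ is a compact finite graph, this preimage is a finite $1$-dimensional CW complex, hence a compact, locally path-connected space. So the fibres of $g_\varepsilon$ already have the regularity required by the definition, and the real content is to organize them into the quotient description and to verify that the homeomorphism type of the fibre is constant on each component of $\R\setminus S'$.

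For the data of the constructible structure I would set $\V'_j := g_\varepsilon^{-1}(b_j)$ for $0\le j\le m$, choose a point $c_j\in(b_j,b_{j+1})$ for each $j$ and set $\E'_j := g_\varepsilon^{-1}(c_j)$, and define $\ell'_j:\E'_j\to\V'_j$ and $r'_j:\E'_j\to\V'_{j+1}$ to be the maps obtained by ``sliding the fibre,'' i.e.\ by letting $c\to b_j^+$ and $c\to b_{j+1}^-$ in the family $\{g_\varepsilon^{-1}(c)\}$. What then must be shown is that these sliding maps are well defined and continuous, and that $\G\times[-\varepsilon,\varepsilon]$ is precisely the quotient of $\coprod_j \V'_j\times\{b_j\}\ \sqcup\ \coprod_j \E'_j\times[b_j,b_{j+1}]$ by the relations induced by the $\ell'_j,r'_j$, with $g_\varepsilon$ the second-coordinate projection. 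Both reduce to the single assertion that $g_\varepsilon$ restricted to $g_\varepsilon^{-1}([b_j,b_{j+1}])$ is ``trivial over $[b_j,b_{j+1}]$'': it is isomorphic, over $[b_j,b_{j+1}]$, to the double mapping cylinder of $\ell'_j$ and $r'_j$.

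I would prove this triviality cell by cell. The space $\G\times[-\varepsilon,\varepsilon]$ is a finite $2$-dimensional CW complex: its $2$-cells are copies of the square $[a_i,a_{i+1}]\times[-\varepsilon,\varepsilon]$, indexed by $e\in\E_i$, on which $g_\varepsilon$ is $(s,t)\mapsto s+t$; its $1$-cells are copies of $\{a_i\}\times[-\varepsilon,\varepsilon]$, indexed by $v\in\V_i$, on which $g_\varepsilon$ is affine. On a square, the level set $\{s+t=c\}$ is the (possibly empty or degenerate) segment in which a line of slope $-1$ meets the rectangle; its endpoints lie on the boundary of the square, and as $c$ ranges over an interval disjoint from $S'$ — note that the $g_\varepsilon$-values of the four corners of the square are exactly $a_i\mp\varepsilon$ and $a_{i+1}\mp\varepsilon$, all of which lie in $S'$ — the pair of edges of the square containing those endpoints does not change, so the family of segments straightens to a product $(\text{segment})\times(b_j,b_{j+1})$ via an explicit piecewise-linear homeomorphism. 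The analogous (easier) statement holds on the vertical $1$-cells. Since the attaching maps of the CW structure on $\G\times[-\varepsilon,\varepsilon]$ are governed by the finite incidence data $\ell_i,r_i$ of $\G$, these cellwise trivializations can be chosen compatibly along glued faces and assembled into a trivialization of $g_\varepsilon$ over each $(b_j,b_{j+1})$; passing to closures yields the double-mapping-cylinder description over $[b_j,b_{j+1}]$, and hence the constructible structure with critical values $S'$.

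I expect the main obstacle to be precisely this last assembly step — the bookkeeping needed to make the cellwise straightenings agree on shared edges and to identify the limiting attaching maps $\ell'_j,r'_j$ — rather than anything conceptually deep. An alternative that sidesteps the explicit homeomorphisms is to observe that $\G\times[-\varepsilon,\varepsilon]$ is a compact polyhedron and $g_\varepsilon$ a PL map to $\R$, which is automatically locally trivial over the complement of a finite set by standard PL transversality; but the direct construction above is closer in spirit to the rest of the paper, and moreover it produces the attaching maps $\ell'_j,r'_j$ in a form convenient for later use.
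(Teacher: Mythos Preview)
The paper does not prove this proposition at all: it is stated with the citation \cite[Proposition~4.23]{deSilvaMunchPatel2016} and no argument is given, so there is no ``paper's own proof'' to compare your proposal against. Your sketch is therefore not redundant but supplementary.

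As to the approach itself, the strategy is sound and is essentially the one used in the cited source. The identification $g_\varepsilon^{-1}(c)\cong g^{-1}([c-\varepsilon,c+\varepsilon])$ via projection is correct and is the key computation; the proposed critical set $S'=\{a_i\pm\varepsilon\}$ is exactly right, since the combinatorial type of the interval $[c-\varepsilon,c+\varepsilon]\cap S$ changes precisely when $c$ crosses a point of $S'$. Your cell-by-cell PL analysis of the squares $[a_i,a_{i+1}]\times[-\varepsilon,\varepsilon]$ under $(s,t)\mapsto s+t$ is also the natural thing to do, and your observation that the corner values lie in $S'$ is what makes the level-set combinatorics constant on each $(b_j,b_{j+1})$. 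The assembly step you flag as the main obstacle is genuine bookkeeping but not a gap: because the attaching maps $\ell_i,r_i$ of $\G$ are maps of \emph{finite discrete sets}, the gluing of squares along vertical edges $\{a_i\}\times[-\varepsilon,\varepsilon]$ is combinatorially rigid, and the piecewise-linear straightenings on adjacent squares automatically agree on those edges once you parametrize each level segment by its (normalized) position along the boundary arc it spans. The alternative you mention via PL local triviality is also valid and is arguably the cleanest way to dispatch the statement in one line.
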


In general, the thickening functor $\cT_\varepsilon$ will output a constructible $\R$-space, and not an $\R$-graph. In order to define a `smoothing' functor for $\R$-graphs (following \cite{deSilvaMunchPatel2016}), we need to introduce a Reeb functor, which maps a constructible $\R$-space to an $\R$-graph. 
\begin{definition}
The \emph{Reeb graph functor} $\cR$ maps a constructible $\R$-space $(\X,f)$ to an $\R$-graph $(\X_f,\bar{f})$, where $\X_f$ is the Reeb graph of $(\X,f)$ and $\bar{f}$ is the function induced by $f$ on the quotient space $\X_f$. The \emph{Reeb quotient map} is the morphism $(\X,f)\rightarrow \cR(\X,f)$ induced by the quotient map $\X\rightarrow \X_f$.
\end{definition}
Now we can define a smoothing functor on the category of $\R$-graphs. 

\begin{definition}
Let $(\G,f) \in \Rgraph$.
The \emph{Reeb smoothing functor} $\cS_\e:\Rgraph \rightarrow \Rgraph$ is defined to be the Reeb graph of an $\e$-thickened $\R$-graph
\[
\cS_\e(\G,f)= \cR\left( \cT_\e(\G,f)\right).
\]
\end{definition}

The Reeb smoothing functor $\cS_\e$ defined above is used to define an interleaving distance for Reeb graphs, called the Reeb interleaving distance. The Reeb interleaving distance, defined below, can be thought of as a geometric analogue of the interleaving distance of constructible cosheaves. Let $\zeta_\F^\e$ be the map from $(\F,f)$ to $\cS_\e(\F,f)$ given by the composition of the zero section map $(\F,f)\rightarrow \cT_\e(\F,f)$ with the Reeb quotient map $\cT_\e(\F,f)\rightarrow \cR(\cT_\e(\F,f))$. To ease notation, we will denote the composition of $\zeta_\F^\e:(\F,f)\rightarrow \cS_\e(\F,f)$ with $\zeta_{\cS_\e(\F,f)}:\cS_\e(\F,f)\rightarrow \cS_\e(\cS_\e(\F,f))$ by $\zeta_\F^\e(\zeta_\F^\e(\F,f))$. 

\begin{definition}
Let $(\F,f)$ and $(\G,g)$ be $\R$-graphs.
We say that $(\F,f)$ and $(\G,g)$ are \emph{$\varepsilon$-interleaved} if there exists a pair of function-preserving  maps
\[
\alpha:(\F,f)\rightarrow \cS_\varepsilon(\G,g)\qquad\text{and}\qquad \beta:(\G,g)\rightarrow \cS_\varepsilon(\F,f)
\]
such that
\[
\cS_\e(\beta)\left(\alpha(\F,f)\right) = \zeta_\F^\e\left(\zeta_\F^\e(\F,f)\right)\quad\text{and}\quad \cS_\e(\alpha)\left(\beta(\G,g)\right) = \zeta_\G^\e\left(\zeta_\G^\e(\G,g)\right).
\]
That is, the diagram
\begin{equation*}
  \begin{tikzcd}
  (\F,f)
    \ar[r]
    \ar[dr, "\alpha", very near start, outer sep = -2pt]
  & \zeta_\F^\e(\F,f)
    \ar[r]
    \ar[dr, "\cS_\e(\alpha)", very near start, outer sep = -2pt]
  & \zeta_\F^\e\left(\zeta_\F^\e(\F,f)\right)
  \\
  (\G,g)
    \ar[r]
    \ar[ur, crossing over, "\beta"', very near start, outer sep = -2pt]
  & \zeta_\G^\e(\G,g)
    \ar[r]
    \ar[ur, crossing over, "\cS_\e(\beta)"', very near start, outer sep = -2pt]
  & \zeta_\G^\e\left(\zeta_\G^\e(\G,g)\right)
  \end{tikzcd}
\end{equation*}
commutes.

The \emph{Reeb interleaving distance}, $d_R\left((\F,f),(\G,g)\right)$, is defined to be the infimum over all $\varepsilon$ such that there exists an $\varepsilon$-interleaving of $(\F,f)$ and $(\G,g)$:
\[
d_R\left((\F,f),(\G,g)\right):=\inf\{\varepsilon:\text{ there exists an $\varepsilon$-interleaving of $(\F,f)$ and $(\G,g)$}\}.
\]
\end{definition}

\begin{remark} 
We should remark on a technical aspect of the above definition. The composition $\zeta_\F^\e\circ\zeta_\F^\e(\F,f)$ is naturally isomorphic to $\zeta_\F^{2\e}(\F,f)$. However, since the definition of the Reeb interleaving distance requires certain diagrams to commute, it is necessary to specify \emph{an} isomorphism between $\zeta_\F^\e\circ\zeta_F^\e(\F,f)$ and $\zeta_\F^{2\e}(\F,f)$ if one would like to replace $\zeta_\F^\e\circ\zeta_F^\e(\F,f)$ with $\zeta_\F^{2\e}(\F,f)$ in the commutative diagrams. Therefore, we choose to work exclusively with the composition of zero section maps, rather than working with diagrams which commute up to natural isomorphism. 
\end{remark}

The remaining proposition of this section gives a geometric realization of the interleaving distance of constructible cosheaves.
\begin{proposition}[\cite{deSilvaMunchPatel2016}]
 $\cD(\sF)$ and $\cD(\sG)$ are $\e$-interleaved as $\R$-graphs if and only if $\sF$ and $\sG$ are $\e$-interleaved as constructible cosheaves.
\end{proposition}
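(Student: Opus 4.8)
The plan is to transport the two notions of $\e$-interleaving across the equivalence of categories between $\Cshc$ and $\Rgraph$ given by the Reeb cosheaf functor $\cC$ and the display locale functor $\cD$ (recalled above from~\cite{deSilvaMunchPatel2016}), using the principle that \emph{Reeb smoothing of $\R$-graphs corresponds, under this equivalence, to set-thickening of cosheaves}. Let $\Sigma_\e\colon\Set^{\Int}\to\Set^{\Int}$ be the thickening functor $(\Sigma_\e\sF)(U)=\sF(U_\e)$ (well-defined on $\Int$ precisely because $U_\e\in\Int$ whenever $U\in\Int$), and note that the inclusions $U\subseteq U_\e$ give a canonical natural transformation $\iota_\e\colon\sF\to\Sigma_\e\sF$, with $\Sigma_\e\Sigma_\e=\Sigma_{2\e}$ and $\Sigma_\e(\iota_\e)\circ\iota_\e=\iota_{2\e}$ (this last composite is the map $\sF[\,\cdot\subseteq\cdot_{2\e}\,]$ appearing in the cosheaf-interleaving definition). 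The crux of the argument is the following compatibility lemma, which I would prove first:
\[
\cC\bigl(\cS_\e(\G,g)\bigr)\;\cong\;\Sigma_\e\,\cC(\G,g)\qquad\text{naturally in }(\G,g),
\]
and, moreover, this isomorphism carries $\cC(\zeta_\G^\e)$ to the canonical map $\iota_\e\colon\cC(\G,g)\to\Sigma_\e\cC(\G,g)$. In particular the right-hand side is a constructible cosheaf, so $\Sigma_\e$ sends $\cC$-images into $\Cshc$.

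To prove the lemma, recall $\cS_\e(\G,g)=\cR\bigl(\cT_\e(\G,g)\bigr)$. Since the Reeb quotient map $(\X,f)\to\cR(\X,f)$ induces a bijection on path components of each preimage $f^{-1}(U)$, the functor $\cC$ sends it to an isomorphism, so $\cC(\cS_\e(\G,g))\cong\cC(\cT_\e(\G,g))$. Now $\cT_\e(\G,g)=(\G\times[-\e,\e],f_\e)$ with $f_\e(x,t)=g(x)+t$, so
\[
\cC\bigl(\cT_\e(\G,g)\bigr)(U)=\pi_0\bigl(f_\e^{-1}(U)\bigr)=\pi_0\bigl(\{(x,t)\in\G\times[-\e,\e]:g(x)+t\in U\}\bigr).
\]
The projection $(x,t)\mapsto x$ maps $f_\e^{-1}(U)$ onto $g^{-1}(U_\e)$ (for $U\in\Int$ this target is an open interval and surjectivity is immediate), with fibers that are closed subintervals of $[-\e,\e]$, hence connected; therefore it induces a bijection $\pi_0(f_\e^{-1}(U))\xrightarrow{\ \sim\ }\pi_0(g^{-1}(U_\e))=(\Sigma_\e\cC(\G,g))(U)$, natural in $U$ and in $(\G,g)$. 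Tracing the zero-section map $x\mapsto(x,0)$ through this identification shows that $\cC(\zeta_\G^\e)$ becomes the map induced by the inclusion $g^{-1}(U)\hookrightarrow g^{-1}(U_\e)$, i.e.~$\iota_\e$. (This is essentially the content of~\cite[\S4]{deSilvaMunchPatel2016}.) Applying the inverse equivalence $\cD$ then yields $\cS_\e\bigl(\cD(\sF)\bigr)\cong\cD\bigl(\Sigma_\e\sF\bigr)$, compatibly with $\zeta$ and $\iota_\e$.

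Granting the lemma, both implications become a diagram chase. Suppose $\cD(\sF)$ and $\cD(\sG)$ are $\e$-interleaved, via $\alpha\colon\cD(\sF)\to\cS_\e(\cD(\sG))$ and $\beta\colon\cD(\sG)\to\cS_\e(\cD(\sF))$ making the pentagon of the $\R$-graph interleaving commute. Apply $\cC$, the lemma, and $\cC\cD\cong\mathrm{id}_{\Cshc}$ to obtain $\phi:=\cC(\alpha)\colon\sF\to\Sigma_\e\sG$ and $\psi:=\cC(\beta)\colon\sG\to\Sigma_\e\sF$; naturality with respect to $U\subseteq U_\e$ is preserved because $\cC$ is a functor. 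The identity $\cS_\e(\beta)\circ\alpha=\zeta_\F^\e\circ\zeta_\F^\e$ becomes, after applying $\cC$ and the lemma, $\Sigma_\e(\psi)\circ\phi=\Sigma_\e(\iota_\e)\circ\iota_\e=\iota_{2\e}=\sF[\,\cdot\subseteq\cdot_{2\e}\,]$, which is exactly $\psi_{U_\e}\circ\varphi_U=\sF[U\subseteq U_{2\e}]$; the other identity is symmetric. Hence $(\phi,\psi)$ is an $\e$-interleaving of $\sF$ and $\sG$. Conversely, starting from a cosheaf $\e$-interleaving $(\phi,\psi)$ and applying $\cD$, the inverse of the lemma, and $\cD\cC\cong\mathrm{id}_{\Rgraph}$ reconstructs $\alpha,\beta$ and their pentagon. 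The main obstacle, as I see it, is the compatibility lemma: verifying that smoothing is intertwined with thickening \emph{together with} the naturality of the identification and its action on the zero-section/canonical maps, and attending to the bookkeeping around the iterated composite $\zeta^\e\circ\zeta^\e$ versus $\zeta^{2\e}$ flagged in the remark above. Once that is in place, the equivalence of the two interleaving notions is formal.
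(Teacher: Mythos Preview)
The paper does not give its own proof of this proposition; it simply cites it from \cite{deSilvaMunchPatel2016}. Your plan is correct and is essentially the argument of that reference: establish the compatibility $\cC\circ\cS_\e\cong\Sigma_\e\circ\cC$ (carrying $\zeta^\e$ to $\iota_\e$), then transport $\e$-interleavings through the equivalence $\cC\dashv\cD$.

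One small technical remark on the compatibility lemma: the inference ``fibers are connected intervals, therefore the projection induces a bijection on $\pi_0$'' is correct but a touch fast as stated. Surjectivity on $\pi_0$ is immediate from surjectivity of the projection; for injectivity you must lift a path $\gamma$ in $g^{-1}(U_\e)$ to $f_\e^{-1}(U)$. This works because, writing $U=(a,b)$, the fiber over $\gamma(s)$ is the interval $[-\e,\e]\cap(a-g(\gamma(s)),\,b-g(\gamma(s)))$, whose endpoints vary continuously in $s$, so a continuous section (e.g.\ the midpoint) exists, and one connects to the given endpoints within the (interval) fibers. With that detail filled in, your argument goes through and matches the original.
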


\begin{figure}
    \centering
    \includegraphics[width = .5\textwidth]{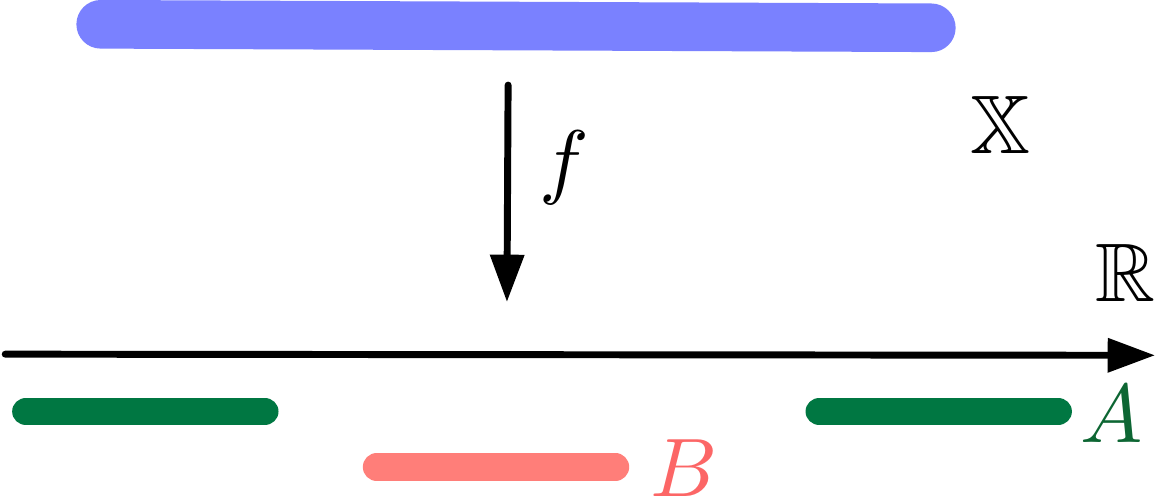}
    \caption{A counter example showing why we use $\mathbf{Int}$ rathar than  $\mathbf{Open}(\mathbb{R})$ for the definition of cosheaves in our context. See  Remark \ref{rem:counterexample}.}
    \label{fig:counterExample}
\end{figure}

\begin{remark}
\label{rem:counterexample}
\myblue{
Cosheaves are usually defined as functors on the category of open sets instead of functors on the connected open sets. 
We choose to use $\mathbf{Int}$ instead of $\mathbf{Open}(\mathbb{R})$ due to technical issues that arise when we begin smoothing the functors. 
Basically, smoothing the functor does not produce a cosheaf when the intervals are replaced by arbitrary open sets in $\mathbb{R}$.
Consider the example of Fig.~\ref{fig:counterExample}, where  $\mathbb{X}$ is a line with map $f$ projection onto $\mathbb{R}$. Say $U^\e$ is the thickening of a set, $U^\e = \{x \in \mathbb{R} \mid |x-U| < \e\}$. 
Then we can pick an $\e$ so that $A^\e$ is two disjoint intervals, and $(A \cup B)^\e$ is one interval. 
Let $F$ be the functor $U \mapsto \pi_0 f^{-1}(U)$ which is a cosheaf representing the Reeb graph. 
Then the functor $F \circ (\cdot)^\e$ is not a cosheaf since by the diagram,}
\begin{equation*}
    \begin{tikzcd}
     \emptyset = F ( (A\cap B)^\e ) \ar[r] \ar[d] 
     & F(A^\e) = \{ \bullet  \bullet \} \ar[d, dashed] \\
     \{ \bullet  \} = F(B^\e) \ar[r, dashed] & \mathrm{colim} = \{ \bullet \bullet \bullet\}
    \end{tikzcd}
\end{equation*}
\myblue{$F(A\cup B)^\e = \{ \bullet\}$ is not the colimit of $F(A^\e)$ and $F(B^\e)$.\footnote{We thank Vin de Silva for this counterexample.}} 
\end{remark}

\subsection{Categorified mapper}
\label{sec:CategorifiedMapper}

In this section, we interpret classic mapper (for scalar functions), a topological descriptor, as a category theoretic object. This interpretation, in terms of cosheaves and category theory, simplifies many of the arguments used to prove convergence results in Section \ref{sec:MainResults}.
We first review the classic mapper  and then discuss the categorified mapper. 
The main ingredient needed to define the mapper construction is a choice of cover.
We say a cover of $\R$ is \emph{good} if all intersections are contractible.
A cover $\cU$ is \emph{locally finite} if for every $x \in \R$, $\cU_x=\{V\in \cU:x\in V\}$ is a finite set. 
In particular, locally finiteness implies that the cover restricted to a compact set is finite.
For the remainder of the paper, we work with \emph{nice covers} which are  good,  locally finite, and consist only of connected intervals, see Figure~\ref{fig:enhanced-mapper}(c) for an example. 

We will now introduce a categorification of mapper. Let $\mathcal{U}$ be a nice cover of $\R$. 
 Let $\mathcal{N}_\mathcal{U}$ be the nerve of $\cU$, endowed with the Alexandroff topology. Consider the continuous map
\begin{eqnarray*}
\eta:\R&\rightarrow& \mathcal{N}_\mathcal{U}\\
x&\mapsto& \bigcap_{V\in \cU_x} V,
\end{eqnarray*}
where the intersection $\bigcap_{V\in \cU_x} V$ is viewed as an open simplex of $\cN_\cU$. The \emph{mapper functor} $\cM_\cU:\Set^\Int\rightarrow \Set^\Int$ can be defined as
\[
\cM_\cU(\sC)= \eta^\ast(\eta_\ast(\sC)),
\]
where $\eta^\ast$ and $\eta_\ast$ are the (pre)-cosheaf-theoretic pull-back and push-forward operations respectively. However, rather than defining $\eta^\ast$ and $\eta_\ast$ in generality, we choose to work with an explicit description of $\cM_\cU(\sC)$ given below. For notational convenience, define 
\begin{eqnarray*}
\cI_\cU:\Int&\rightarrow&\Int\\
U&\mapsto& \eta^{-1}(\St( \eta(U))),
\end{eqnarray*}
where $\St (\eta(U))$ denotes the minimal open set in $\cN_\cU$ containing \myblue{$\eta(U):=\cup_{x\in U}\eta(x)$} (the open star of $\eta(U)$ in $\cN_\cU$). It is often convenient to identify $\cI_\cU(U)$ with a union of open intervals in $\R$.
\begin{lemma}
Using the notation defined above, we have the equality
\[\cI_\cU(U)=\bigcup_{x\in U}\bigcap_{V\in\cU_x}V,\]
where $\bigcap_{V\in\cU_x}V$ is viewed as a subset of $\R$ (not as a simplex of $\cN_\cU$).
\end{lemma}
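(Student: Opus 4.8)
The plan is to unwind every definition in sight and reduce the claimed equality to a single defining property of $\eta$ together with two elementary facts about the Alexandroff topology on the nerve; the result is then a short definition chase. Throughout I will model $\cN_\cU$ so that its open sets are the up-sets of the face poset (where $\sigma\le\tau$ means $\sigma$ is a face of $\tau$); this is the convention under which $\eta$ is continuous, and under which $\St(\eta(U))$ really is the minimal open set containing $\eta(U)$.

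First I would pin down the behaviour of $\eta$ on vertex stars. For $x\in\R$ the simplex $\eta(x)$ is, by definition, the one spanned by the vertices $\{v_V : V\in\cU_x\}$; this is well defined because $\cU$ covers $\R$, so $\cU_x\ne\emptyset$, and is locally finite, so $\cU_x$ is finite. Hence, writing $\St(v_V)$ for the open star of the vertex $v_V$ corresponding to $V\in\cU$, the vertex $v_V$ is a face of $\eta(x)$ exactly when $V\in\cU_x$, i.e.\ exactly when $x\in V$; equivalently $\eta^{-1}(\St(v_V))=V$ for every $V\in\cU$. This is the only place the cover structure enters. Next I would record two facts about the Alexandroff topology on $\cN_\cU$: (i) for a simplex $\sigma$, the open star $\St(\sigma)$ equals the intersection of the open stars of the vertices of $\sigma$ (both sides are the set of simplices having $\sigma$ as a face); and (ii) the minimal open set containing a subset $A\subseteq\cN_\cU$ is $\bigcup_{\sigma\in A}\St(\sigma)$, since a union of up-sets is an up-set and any open set containing $A$ must contain $\St(\sigma)$ for each $\sigma\in A$. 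Applying (ii) to $A=\eta(U)=\{\eta(x):x\in U\}$ gives $\St(\eta(U))=\bigcup_{x\in U}\St(\eta(x))$, and applying (i) to each $\eta(x)$ gives $\St(\eta(x))=\bigcap_{V\in\cU_x}\St(v_V)$.

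Finally I would assemble these, using that $\eta^{-1}$ commutes with arbitrary unions and intersections:
\begin{align*}
\cI_\cU(U)=\eta^{-1}\big(\St(\eta(U))\big) &= \bigcup_{x\in U}\eta^{-1}\Big(\bigcap_{V\in\cU_x}\St(v_V)\Big)\\
&= \bigcup_{x\in U}\bigcap_{V\in\cU_x}\eta^{-1}\big(\St(v_V)\big) = \bigcup_{x\in U}\bigcap_{V\in\cU_x}V.
\end{align*}
I do not anticipate a genuine obstacle here, since the argument is purely formal. The one point that needs care is the bookkeeping around the Alexandroff topology — in particular being consistent about whether $\eta(x)$ is read as a simplex, as a point of its open simplex, or as an element of the face poset. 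In every reading the computation is identical, because all that is used is that the minimal open neighbourhood of $\eta(x)$ is $\St(\eta(x))$ and that this open star is the intersection of the stars of the vertices $\{v_V:V\in\cU_x\}$; I would therefore fix one such reading at the start of the proof and note the equivalence in a sentence.
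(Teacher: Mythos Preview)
Your proof is correct. It takes a somewhat different route from the paper's, so a brief comparison is in order.

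The paper argues by a direct double inclusion at the level of points: for $y\in\bigcup_{x\in U}\bigcap_{V\in\cU_x}V$ it picks a witness $x\in U$ with $\cU_x\subseteq\cU_y$, reads this as $\eta(y)\ge\eta(x)$ in the face poset, and concludes $\eta(y)\in\St(\eta(U))$; the reverse inclusion is the same observation run backwards. In contrast, you first isolate the single structural identity $\eta^{-1}(\St(v_V))=V$, then decompose $\St(\eta(U))$ as a union over $x\in U$ of intersections of vertex stars, and pull back through $\eta^{-1}$ formally. Your approach makes explicit \emph{why} the formula has the shape it does (cover elements are exactly the $\eta$-preimages of vertex stars), at the cost of a little more setup about the Alexandroff topology; the paper's approach is shorter and needs only the order relation $\eta(y)\ge\eta(x)\Leftrightarrow\cU_x\subseteq\cU_y$, but leaves that structural picture implicit. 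Both are entirely valid, and your caveat about fixing a reading of $\eta(x)$ (simplex vs.\ poset element) is well placed.
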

\begin{proof}
If $y\in \bigcup_{x\in U}\bigcap_{V\in\cU_x}V$, then there exists an $x\in U$ such that $y\in V$ for all $V\in \cU_x$. In other words, $\cU_x\subseteq \cU_y$. Therefore, $\eta(y)\ge \eta(x)$ in the partial order of $\cN_\cU$. Therefore, $\eta(y)\in \St (\eta(U))$. This implies that $\bigcup_{x\in U}\bigcap_{V\in\cU_x}V \subseteq \cI_\cU(U)$. For the reverse inclusion, assume that $u\in \cI_\cU(U)$, i.e., $\eta(u)\in\St(\eta(U))$. This implies that there exists $v\in U$ such that $\eta(u)\ge \eta(v)$. In other words, $\cU_v\subseteq \cU_u$. Therefore $u\in \cap_{V\in \cU_v}V$, and $u\in \bigcup_{v\in U}\bigcap_{V\in\cU_v}V$.
\qed
\end{proof}
 Under this identification, it is clear that $\cI_\cU(U)$ is an open set in $\R$ (since the open cover $\cU$ is locally finite), and if $U\subset V$ then $\cI_\cU(U)\subset \cI_\cU(V)$. Moreover, since $\bigcap_{V\in\cU_x}V$ is an interval open neighborhood of $x$ and $U$ is an open interval, then $\cI_\cU(U)$ is an open interval. Therefore, $\cI_\cU$ can be viewed as a functor from $\Int$ to $\Int$. 

Finally, we can give $\cM_\cU(\sC)$ an explicit description in terms of the functor $\cI_\cU$. 
\begin{definition}
The \emph{mapper functor} $\cM_\cU:\Set^\Int\rightarrow \Set^\Int$ is defined by
\[
\cM_\cU(\sC)(U):= \sC(\cI_\cU(U)),
\]
for each open interval $U\in\Int$. 
\end{definition}

Since $\cI_\cU$ is a functor from $\Int$ to $\Int$, it follows that $\cM_\cU$ is a functor from $\Set^\Int$ to $\Set^\Int$. Hence, $\cM_\cU(\sC)$ is a functor from the category of pre-cosheaves to the category of pre-cosheaves. In the following proposition, we show that if $\sC$ is a cosheaf, then $\cM_\cU(\sC)$ is in fact a constructible cosheaf. 

\begin{proposition}
  \label{prop:mapper-functor}
Let $\cU$ be a finite nice open cover of $\R$. The mapper functor $\cM_\cU$ is a functor from the category of cosheaves on $\R$ to the category of constructible cosheaves on $\R$:
 \[
 \cM_\cU:\text{\bf{CSh}}\rightarrow \text{\bf{CSh}}^\text{\bf{c}}.
 \]
 Moreover, the set of critical points of $\cM_\cU(\sF)$ is a subset of the set of boundary points of open sets in $\cU$.
\end{proposition}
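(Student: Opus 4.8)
The plan is to verify the two required properties in turn: first that $\cM_\cU(\sC)$ is a cosheaf whenever $\sC$ is, and second that this cosheaf is constructible with the claimed critical set. The key structural observation, already established in the excerpt, is that $\cI_\cU$ is a well-defined functor $\Int \to \Int$ that is monotone with respect to inclusions and that $\cM_\cU(\sC) = \sC \circ \cI_\cU$. So the cosheaf axiom for $\cM_\cU(\sC)$ must be checked by transporting covers of an interval $U$ along $\cI_\cU$ to covers of $\cI_\cU(U)$ and invoking the cosheaf axiom for $\sC$. Concretely, given $U \in \Int$ and an open interval cover $\cV \subset \Int$ of $U$ closed under finite intersections, I would show that $\{\cI_\cU(V) : V \in \cV\}$ is a cover of $\cI_\cU(U)$: this uses the explicit formula $\cI_\cU(U) = \bigcup_{x \in U} \bigcap_{W \in \cU_x} W$ from the lemma, since any point of $\cI_\cU(U)$ lies in $\bigcap_{W\in\cU_x}W$ for some $x\in U$, and that $x$ lies in some $V \in \cV$, whence the point lies in $\cI_\cU(V)$. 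The subtle point — and the reason the paper insists on working over $\Int$ rather than $\mathbf{Open}(\R)$ (cf.\ Remark~\ref{rem:counterexample}) — is that $\cI_\cU$ need not send intersections to intersections, so $\{\cI_\cU(V)\}$ need not be closed under finite intersection. I would handle this by passing to the cover of $\cI_\cU(U)$ generated by the $\cI_\cU(V)$ together with all their finite intersections, using that a colimit over a cofinal subdiagram agrees with the full colimit, and that the relevant diagram maps compatibly to the $\cV$-indexed diagram for $\sC$ via $\cI_\cU$. Because $\cI_\cU$ preserves inclusions, the colimit $\varinjlim_{V \in \cV} \sC(\cI_\cU(V))$ receives a map from (and, after the cofinality argument, is computed by) $\varinjlim_{V' } \sC(V')$ over the generated cover $\{V'\}$ of $\cI_\cU(U)$, which equals $\sC(\cI_\cU(U)) = \cM_\cU(\sC)(U)$ by the cosheaf property of $\sC$.

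For constructibility, let $B$ denote the finite set of boundary points of the cover elements in $\cU$ (finite because $\cU$ is finite). I would show that if $U \subset V$ are open intervals with $B \cap U = B \cap V$, then $\cI_\cU(U) = \cI_\cU(V)$, which immediately forces $\cM_\cU(\sC)[U \subset V] = \sC[\cI_\cU(U) \subset \cI_\cU(V)]$ to be the identity, hence an isomorphism. The point is that $\cI_\cU(U)$ depends only on the combinatorial data $\{\cU_x : x \in U\}$ — which cover elements contain which points of $U$ — and this data is locally constant away from $B$: as $x$ moves within a connected component of $\R \setminus B$, the set $\cU_x$ is constant. So $\cI_\cU(U) = \bigcup_{x\in U} \bigcap_{W \in \cU_x} W$ is determined by which components of $\R\setminus B$, and which points of $B$, the interval $U$ meets; if $U$ and $V$ meet exactly the same points of $B$ and $U \subset V$, then (since $V \setminus U$ can only extend into components of $\R\setminus B$ already touched by $U$) they meet the same set of components, giving $\cI_\cU(U) = \cI_\cU(V)$. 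This shows the critical set of $\cM_\cU(\sC)$ is contained in $B$.

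I expect the main obstacle to be the cosheaf-axiom step, specifically the failure of $\cI_\cU$ to commute with finite intersections and the consequent need for a careful cofinality argument when comparing colimits over $\cV$ with colimits over an actual intersection-closed cover of $\cI_\cU(U)$. The monotonicity and the explicit union formula for $\cI_\cU$ do the heavy lifting, but one must be attentive to exactly which diagram of open subsets of $\cI_\cU(U)$ one takes the colimit over, and verify it is cofinal among intersection-closed covers; this is precisely the subtlety that the counterexample in Figure~\ref{fig:counterExample} shows can go wrong over $\mathbf{Open}(\R)$ but is repaired by restricting to connected open sets, where $\cI_\cU$ lands in $\Int$ and the relevant diagrams behave. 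The constructibility step, by contrast, is essentially bookkeeping once the local-constancy of $x \mapsto \cU_x$ away from $B$ is noted.
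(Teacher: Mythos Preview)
Your outline follows the paper's proof closely: both parts---the cosheaf axiom via transporting covers along $\cI_\cU$, and constructibility via the local constancy of $x\mapsto\cU_x$ away from the boundary set $B$---match the paper's argument, and your treatment of constructibility is essentially identical.

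The one substantive omission is in the cosheaf step. You correctly identify that $\{\cI_\cU(V):V\in\cV\}$ covers $\cI_\cU(U)$ but is not closed under intersections, and you propose to repair this by a ``cofinality argument'' comparing the $\cV$-indexed colimit with the colimit over the intersection closure $\cI_\cU(\cV)'$. But cofinality here is not automatic: you must show that whenever $I_1,I_2\in\cV$ satisfy $I_1\cap I_2=\emptyset$ yet $\cI_\cU(I_1)\cap\cI_\cU(I_2)\neq\emptyset$, this ``spurious'' intersection is already contained in some $\cI_\cU(V)$ with $V\in\cV$. The paper supplies exactly this missing geometric ingredient: it proves that $\cI_\cU(I_1)\cap\cI_\cU(I_2)$ is a single element $W\in\cU'$, and then, using that $U$ is connected so that $I_1$ and $I_2$ are joined by a chain $I_1=J_1,\ldots,J_n=I_2$ in $\cV$ with $J_j\cap J_{j+1}\neq\emptyset$, shows $W\subset\cI_\cU(J_j)$ for every $j$. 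This chain argument is what makes the comparison of colimits go through (the paper then cites \cite[Proposition~4.17]{deSilvaMunchPatel2016}), and it is precisely the place where connectedness of the intervals is used---the feature that, as you note, fails over $\mathbf{Open}(\R)$. Without this observation your cofinality claim is unjustified.
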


\begin{proof}
We will first show that if $\sC$ is a cosheaf on $\R$, then $\cM_\cU(\sC)$ is a cosheaf on $\R$. We have already shown that $\cM_\cU(\sC)$ is a pre-cosheaf. So all that remains is to prove the colimit property of cosheaves. Let $U\in\Int$ and $\cV\subset \Int$ be a cover of $U$ by open intervals which is closed under intersections. By definition of $\cM_\cU(\sC)$, we have
\[
\varinjlim_{V\in\cV}\cM_\cU(\sC)(V) = \varinjlim_{V\in\cV} \sC(\cI_\cU(V)).
\]
Notice that $\cI_\cU(\cV):=\{\cI_\cU(V):V\in\cV\}$ forms an open cover of $\cI_\cU(U)$. However, in general this cover is no longer closed under intersections. We will proceed by showing that passing from $\cV$ to $\cI_\cU(\cV)':=\{\bigcap_{i\in I} W_i:\{W_i\}_{i\in I}\subset \cI_\cU(\cV)\}$ does not change the colimit
\[
\varinjlim_{V\in\cV} \sC(\cI_\cU(V)).
\]
Suppose $I_1$ and $I_2$ are two open intervals in $\cV$ such that $I_1\cap I_2=\emptyset$ and $\cI_\cU(I_1)\cap \cI_\cU(I_2)\neq \emptyset$. Recall that $\cU'$ is the union of $\cU$ with all intersections of cover elements in $\cU$, i.e., the closure of $\cU$ under intersections. By the identification
\[\cI_\cU(I_i)=\bigcup_{x\in I_i}\bigcap_{V\in\cU_x}V,\]
there exists a subset $\{W_j\}_{j\in J}\subset \cU'$ such that \[\cI_\cU(I_1)\cap\cI_\cU(I_2)=\bigcup_{j\in J}W_j.\]
Suppose there exist $V_1,V_2\in\cU'$ such that $V_i\subsetneq V_1 \cup V_2$ (i.e., one set is not a subset of the other), and $V_1\cup V_2\subset \cI_\cU(I_1)\cap\cI_\cU(I_2)$. In other words, suppose that the cardinality of $J$, for any suitable choice of indexing set, is strictly greater than 1. Then there exists $x_1,x_2\in I_1$ such that $x_1\in V_1\setminus V_2$ and $x_2\in V_2\setminus V_1$. Let $w$ either be a point contained in $V_1\cap V_2$ (if $V_1\cap V_2\neq \emptyset$) or a point which lies between $V_1$ and $V_2$. Since $I_1$ is connected, we have that $w\in I_1$. A similar argument shows that $w\in I_2$, which implies the contradiction $I_1\cap I_2\neq\emptyset$. Therefore, \[\cI_\cU(I_1)\cap\cI_\cU(I_2) = W,\] for some $W\in\cU'$.
Suppose $W=\bigcap_{k\in K}W_k$ for some $\{W_k\}_{k\in K}\subset\cU$, and let $I_1=J_1, J_2, \cdots, J_n=I_2$ be a chain of open intervals in $\cV$, such that $J_j\cap J_{j+1}\neq \emptyset$. We have that
\[
I_1\cup \bigcup_{k\in K}W_k\cup I_2
\]
is connected, because $I_1$, $I_2$, and $\bigcup_{k\in K}W_k$ are intervals with $\bigcup_{k\in K}W_k\cap I_1$ and $\bigcup_{k\in K}W_k\cap I_2$ nonempty. Therefore, for each $j$, $J_j\cap W_k\neq \emptyset $ for some $k$, i.e., $W\subset \cI_\cU(J_j)$. In conclusion, we have shown that
\[
\cI_\cU(I_1)\cap\cI_\cU(I_2)\subseteq \cI_\cU(J_j)\text{ for each $j$.}
\]
Following the arguments in the proof of Proposition 4.17 of \citep{deSilvaMunchPatel2016}, it can be shown that
\[
\varinjlim_{V\in\cV} \sC(\cI_\cU(V)) =\varinjlim_{U\in\cI_\cU(\cV)} \sC(U)= \varinjlim_{U\in\cI_\cU(\cV)'} \sC(U).
\]
Since $\sC$ is a cosheaf, we can use the colimit property of cosheaves to get
\[
\varinjlim_{V\in\cV}\cM_\cU(\sC)(V) = \sC(\cI_\cU(U)).
\]
Therefore $\cM_\cU(\sC)$ is cosheaf. We will proceed to show that $\cM_\cU(\sC)$ is constructible.

Let $S$ be the set of boundary points for open sets in $\cU$. Since $\cU$ is a finite, good cover of $\R$, $S$ is a finite set. If $U\subset V$ are two open sets in $\R$ such that $U\cap S = V\cap S$, then $\cI_\cU(U)=\cI_\cU(V)$. Therefore $\cM_\cU(\sF)(U)\rightarrow\cM_\cU(\sF)(V)$ is an isomorphism.
\qed
\end{proof}


We use the mapper functor to relate Reeb graphs (the display locale of the Reeb cosheaf $\sR_f$) to the enhanced mapper graph (the display locale of $\cM_\cU(\sR_f)$).
In particular, the error is controlled by the resolution of the cover, as defined below.
\begin{definition}\label{def:cover}
Let $\cU$ be a nice cover of $\R$ and $\sF$ a cosheaf on $\R$. The \emph{resolution} of $\cU$ relative to $\sF$, denoted $\res_\sF\cU$, is defined to be the maximum of the set of diameters of $\cU_\sF:=\{V\in\cU:\sF(V)\neq\emptyset\}$:
\[
\res_\sF\cU:=\max\{\diam(V):V\in\cU_\sF\}.
\]
\end{definition}
Here we understand the diameter of open sets of the form $(a,+\infty)$ or $(-\infty,b)$ to be infinite. Therefore, the resolution $\res_\sF\cU$ can take values in the extended non-negative numbers $\R_{\ge0}\sqcup \{+\infty\}$. 
\myblue{
\begin{remark}
If $\sR_f$ is a Reeb cosheaf of a constructible $\R$-space $(\X,f)$, then $\sR_f(V)\neq\emptyset $ if and only if $V\cap f(\X)\neq \emptyset$. 
\end{remark}
\begin{definition}\label{def:resolution} Define $\res_f\cU$ by 
\[
\res_{f}\cU:=\max\{\diam(V):V\in\cU_f\},
\]
where $\cU_f:=\{V\in\cU: V\cap f(\X)\neq \emptyset\}$.
\end{definition}
The following theorem is analogous to \cite[Theorem 1]{MunchWang2016}, adapted to the current setting. Specifically, our definition of the mapper functor $\cM_\cU$ differs from the functor $\mathcal{P}_K$ of \citep{MunchWang2016}, and the convergence result of \citep{MunchWang2016} is proved for multiparameter mapper (whereas the following result is only proved for the one-dimensional case). } 
\begin{theorem}[{cf. \cite[Theorem 1]{MunchWang2016}}]
  \label{thm:interleave}
  Let $\cU$ be a nice cover of $\R$, and $\sF$ a cosheaf on $\R$. Then
$$
d_I(\sF,\cM_\cU(\sF))\le \res_\sF\cU.
$$
\end{theorem}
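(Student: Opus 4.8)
The plan is to construct an explicit $\varepsilon$-interleaving between $\sF$ and $\cM_\cU(\sF)$ with $\varepsilon = \res_\sF\cU$, and then conclude by the definition of $d_I$. The key geometric fact I would establish first is a sandwiching of the functor $\cI_\cU$ between identity and $\varepsilon$-thickening: for every open interval $U \in \Int$, one has inclusions
\[
U \subseteq \cI_\cU(U) \subseteq U_\varepsilon,
\]
where $\varepsilon = \res_\sF\cU$. The left inclusion is immediate (since $x \in \bigcap_{V \in \cU_x} V$, so $U \subseteq \bigcup_{x \in U} \bigcap_{V \in \cU_x} V = \cI_\cU(U)$ by the lemma). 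For the right inclusion, I would argue that if $y \in \cI_\cU(U)$, then $y \in \bigcap_{V \in \cU_x} V$ for some $x \in U$, so $y$ and $x$ lie in a common cover element $V \in \cU$; if $\sF(V) \neq \emptyset$ this element has diameter at most $\res_\sF\cU$ so $\|y - x\| \le \varepsilon$; and if $\sF(V) = \emptyset$ one needs a small additional argument (either this case is handled by the convention that such cover elements can be discarded without changing $\cM_\cU(\sF)$, or one observes that $\sF$ vanishes on the relevant region so the interleaving maps on those sets are between zero objects and commutativity is automatic). This is the step I expect to require the most care — reconciling the "resolution relative to $\sF$" (which ignores cover elements on which $\sF$ vanishes) with the genuinely set-theoretic behavior of $\cI_\cU$.

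Given the sandwich $U \subseteq \cI_\cU(U) \subseteq U_\varepsilon$, the interleaving maps are then canonical. Define $\varphi_U : \sF(U) \to \cM_\cU(\sF)(U_\varepsilon) = \sF(\cI_\cU(U_\varepsilon))$ using the inclusion $U \subseteq \cI_\cU(U_\varepsilon)$ (which holds since $U \subseteq U_\varepsilon \subseteq \cI_\cU(U_\varepsilon)$), and $\psi_U : \cM_\cU(\sF)(U) = \sF(\cI_\cU(U)) \to \sF(U_\varepsilon)$ using the inclusion $\cI_\cU(U) \subseteq U_\varepsilon$, in both cases applying $\sF$ to the inclusion of intervals. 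Naturality with respect to $U \subseteq U_\varepsilon$ follows because $\cI_\cU$ is a functor on $\Int$ and $\sF$ is a functor, so every square in sight is built from compositions of the structure maps of $\sF$ along a commuting diagram of interval inclusions — hence all the relevant diagrams commute by functoriality.

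Finally, for the two triangle identities: the composite $\psi_{U_\varepsilon} \circ \varphi_U : \sF(U) \to \sF(U_{2\varepsilon})$ is $\sF$ applied to the chain of inclusions $U \subseteq \cI_\cU(U_\varepsilon) \subseteq (U_\varepsilon)_\varepsilon = U_{2\varepsilon}$, which is exactly $\sF[U \subseteq U_{2\varepsilon}]$; and $\varphi_{U_\varepsilon} \circ \psi_U : \cM_\cU(\sF)(U) \to \cM_\cU(\sF)(U_{2\varepsilon})$ is $\sF$ applied to $\cI_\cU(U) \subseteq U_{2\varepsilon} \subseteq \cI_\cU(U_{2\varepsilon})$, which is $\cM_\cU(\sF)[U \subseteq U_{2\varepsilon}]$. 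Both hold by functoriality of $\sF$ once the underlying interval inclusions are verified, which reduces to the sandwich inequality already established. Since an $\varepsilon$-interleaving exists with $\varepsilon = \res_\sF\cU$, we conclude $d_I(\sF, \cM_\cU(\sF)) \le \res_\sF\cU$. (One should also note at the outset, via Proposition~\ref{prop:mapper-functor}, that $\cM_\cU(\sF)$ is a constructible cosheaf, so the interleaving distance is being applied to legitimate objects; strictly the definition of $d_I$ is for cosheaves, and $\sF$ is assumed to be one, so this is automatic.)
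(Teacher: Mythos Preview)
Your overall strategy---build an explicit $\varepsilon$-interleaving from interval inclusions---matches the paper's, and your map $\varphi_U:\sF(U)\to\sF(\cI_\cU(U_\varepsilon))$ coming from $U\subseteq\cI_\cU(U_\varepsilon)$ is exactly what the paper uses (with the names of $\varphi$ and $\psi$ swapped). The gap is in the other direction.

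The inclusion $\cI_\cU(U)\subseteq U_\varepsilon$ that you want for $\psi_U$ is \emph{false} in general, and the paper says so explicitly before giving a counterexample: take $\cU=\{(-\infty,-1),(-2,2),(1,+\infty)\}$ and $\sF$ the constant cosheaf supported at $0$; then $\res_\sF\cU=4$, but for $I=(0,3)$ one gets $\cI_\cU(I)=(-2,+\infty)$, which is unbounded. Neither of your proposed patches resolves this. Option~(b) does not make sense because $\psi_U$ is a single map $\sF(\cI_\cU(U))\to\sF(U_\varepsilon)$, not a family of maps ``on regions''; if the underlying interval inclusion fails, there is simply no morphism $\sF[\cI_\cU(U)\subseteq U_\varepsilon]$ to write down. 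Option~(a), discarding cover elements $V$ with $\sF(V)=\emptyset$, breaks the hypothesis that $\cU$ covers $\R$ (in the counterexample you would be left with the single set $(-2,2)$), and in any case you have not argued that $\cM_\cU(\sF)$ is unchanged by such a deletion.

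The paper's remedy is the step you are missing: rather than shrinking the cover, it shrinks $\cI_\cU(U)$ itself \emph{after} applying $\sF$, using the cosheaf (colimit) axiom. Writing $W_x=\bigcap_{V\in\cU_x}V$ and $E=\{e\in U:\sF(W_e)=\emptyset\}$, the paper shows
\[
\cM_\cU(\sF)(U)=\sF(\cI_\cU(U))=\varinjlim_{x\in U}\sF(W_x)=\varinjlim_{x\in U\setminus E}\sF(W_x)=\sF\Bigl(\bigcup_{x\in U\setminus E}W_x\Bigr),
\]
the middle equality holding because the discarded terms are initial objects in $\Set$. For $x\in U\setminus E$ one has $\sF(W_x)\neq\emptyset$, hence $\diam(W_x)\le\res_\sF\cU$ (since $W_x\subseteq V$ for some $V\in\cU_\sF$), and $x\in W_x\cap U$, so $W_x\subseteq U_\varepsilon$. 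Now the honest inclusion $\bigcup_{x\in U\setminus E}W_x\subseteq U_\varepsilon$ yields $\psi_U$, and the composition identities follow by functoriality exactly as you argue. Your intuition that the ``empty'' part can be ignored is correct, but the mechanism is the cosheaf axiom applied to the $W_x$'s, not a modification of $\cU$.
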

\begin{proof}
If $\res_\sF\cU=+\infty$, then the inequality is automatically satisfied. Therefore, we will work with the assumption that $\res_\sF\cU<+\infty$. Let $\delta_\cU=\res_\sF\cU<+\infty$. We will prove the theorem by constructing a $\delta_\cU$-interleaving of the sheaves $\sF$ and $\cM_\cU(\sF)$. Suppose $I\in\Int$. For each $x\in I$, let $W_x=\bigcap_{V\in\cU_x}V$. Recall that 
\[\cI_\cU(I)=\bigcup_{x\in I}W_x. \]
 Ideally, we would construct an interleaving based on an inclusion of the form $\cI_\cU(I)\subset I_{\delta_\cU}$. However, this inclusion will not always hold. For example, if $\cU$ is a finite cover, then it is possible for $I$ to be a bounded open interval, and for $\cI_\cU(I)$ to be unbounded. 
 
 \myblue{We will include a simple example to illustrate this behavior. Suppose $\cU = \{(-\infty, -1),(-2,2), (1,+\infty)\}$ and let $\sF$ be the constant cosheaf supported at $0$, i.e. $\sF(U)=\emptyset$ if $0\notin U$ and $\sF(V) = \{\ast\}$ if $0\in V$. Consider the interval $I = (0,3)$. For each $x\in (0,1]\subset I$, we have that $W_x = (-2,2)$. If $x\in (1,2)\subset I$, then $W_x = (-2,2)\cap (1,+\infty)$. Finally, if $x\in [2,3)\subset I$, then $W_x = (1,+\infty)$. Therefore, $\cI_\cU(I) = (-2,+\infty)$, which is unbounded. However, we observe that $\sF((-\infty,-1))=\emptyset$, $\sF((-2,2))=\{\ast\}$, and $\sF((1,+\infty))=\emptyset$. Therefore, (in the notation of Definition \ref{def:cover}) $\cU_\sF=\{(-2,2)\}$, and $\res_\sF\cU = \diam((-2,2))=4$. }
 
 Although $\cI_\cU(I)$ may be unbounded, we can construct an interval $I'$ which is contained in $I_{\delta_\cU}$ and satisfies the equality $\sF(\cI_\cU(I))= \sF(I')$. The remainder of the proof will be dedicated to constructing such an interval. 
 
 Let $\cW:=\{U: U =\cap_{a\in A} W_a\text{ for some }A\subset I\}$ be an open cover of $\cI_\cU(I)$ which is closed under intersections and generated by the open sets $W_x$. Then the colimit property of cosheaves gives us the equality
\[\sF(\cI_\cU(I)) = \varinjlim_{U\in\cW} \sF(U). \]
Let $E:=\{e\in I:\sF(W_e) = \emptyset\} $. If $U = \cap_{a\in A }W_a$ and $A\cap E\neq\emptyset$, then $\sF(U)=\emptyset$. Let $\cW_{I\setminus E}=\{U\in \cW: U= \cap_{a\in A} W_a\text{ for some } A\subset I\setminus E\}$. We should remark on a small technical matter concerning $ I\setminus E$. In general, this set is not necessarily connected. If that is the case, we should replace $I\setminus E$ with the minimal interval which covers $I\setminus E$. Going forward, we will assume that $I\setminus E $ is connected. Altogether we have 
\[ \cM_\cU(\sF)(I)=\sF(\cI_\cU(I)) =\varinjlim_{U\in\cW} \sF(U)=\varinjlim_{U\in\cW_{I\setminus E}} \sF(U)=\sF\left(\bigcup_{x\in I\setminus E }W_x\right) . \]
If $x\in I\setminus E$, then $W_x\cap I\neq\emptyset$ and $\sF(W_x)\neq \emptyset$. Therefore, $W_x \subseteq I_{\delta_\cU}$, since $\diam(W_x)\le\delta_\cU$. 
Moreover, 
\[
 \bigcup_{x\in I\setminus E}W_x \subseteq I_{\delta_\cU} .
\]
The above inclusion induces the following map of sets
\[
\varphi_I:\cM_\cU(\sF)(I)\rightarrow \sF(I_{\delta_\cU}),
\]
which gives the first family of maps of the $\delta_\cU$-interleaving. The second family of maps
\[
\psi_I:\sF(I)\rightarrow \cM_\cU(\sF)(I_{\delta_\cU}),
\]
follows from the more obvious inclusion $I \subset \cI_\cU(I_{\delta_\cU})$. Since the interleaving maps are defined by inclusions of intervals, it is clear that the composition formulae are satisfied:
\[
\psi_{I_{\delta_\cU}}\circ\varphi_I=\sF[I\subset I_{2\delta_\cU}],\qquad \varphi_{I_{\delta_\cU}}\circ\psi_I=\cM_\cU(\sF)\left[I\subset I_{2\delta_\cU}\right].
\]
\qed
\end{proof}
\begin{remark} 
One might think that Theorem \ref{thm:interleave} can be used to obtain a convergence result for the mapper graph of a general $\R$-space. However, we should emphasize that the interleaving distance is only an extended \emph{pseudo}-metric on the category of all cosheaves. Therefore, even if the interleaving distance between $\sF$ and $\cM_\cU(\sF)$ goes to 0, this does not imply that the cosheaves are isomorphic. We only obtain a convergence result when restricting to the subcategory of constructible cosheaves, where the interleaving distance gives an extended metric. 
\end{remark}

The display locale $\mathfrak{D}(\cM_\cU(\sR_f))$ of the mapper cosheaf is a 1-dimensional CW-complex obtained by gluing the boundary points of a finite disjoint union of closed intervals, see Figure~\ref{fig:enhanced-mapper}(h). We will refer to this CW-complex as the \emph{enhanced mapper graph} of $(\X,f)$ relative to $\cU$, see Figure~\ref{fig:enhanced-mapper}(g). There is a natural surjection from $\mathfrak{D}(\cM_\cU(\sR_f))$ to the nerve of the connected cover pull-back of $\cU$, $\cN_{f^\ast(\cU)}$, i.e., from the enhanced mapper graph to the mapper graph, when the cover $\cU$ contains open sets with empty triple intersections.

Using the Reeb interleaving distance and the enhanced mapper graph, we obtain and reinterpret the main result of \citep{MunchWang2016} in the following corollary. 
\begin{corollary}[{cf. \cite[Corollary 6]{MunchWang2016}}]
  Let $\cU$ be a nice cover of $\R$, and $(\X,f) \in \Rspacec$. Then
$$
d_R(\cR(\X,f),\mathfrak{D}(\cM_\cU(\sR_f)))\le \res_f\mathcal{U}.
$$
\end{corollary}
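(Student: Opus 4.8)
The plan is to obtain the corollary as an almost immediate consequence of Theorem~\ref{thm:interleave} together with the equivalence of categories between constructible cosheaves on $\R$ and $\R$-graphs. First I would apply Theorem~\ref{thm:interleave} with $\sF=\sR_f$, the Reeb cosheaf of $(\X,f)$, which gives
\[
d_I\big(\sR_f,\cM_\cU(\sR_f)\big)\le\res_{\sR_f}\cU .
\]
By the remark preceding Definition~\ref{def:resolution}, $\sR_f(V)\neq\emptyset$ precisely when $V\cap f(\X)\neq\emptyset$, so $\cU_{\sR_f}=\cU_f$ and hence $\res_{\sR_f}\cU=\res_f\cU$; thus the right-hand side is exactly $\res_f\cU$.

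Next I would transfer this bound on the cosheaf interleaving distance to a bound on the Reeb interleaving distance of the associated display locales. Both cosheaves involved are constructible: $\sR_f$ because the Reeb cosheaf functor $\cC$ takes values in $\Cshc$ (Section~\ref{sec:reeb}), and $\cM_\cU(\sR_f)$ by Proposition~\ref{prop:mapper-functor}. Therefore the proposition of~\cite{deSilvaMunchPatel2016} recalled at the end of Section~\ref{ssec-interleavings} applies: $\cD(\sR_f)$ and $\cD(\cM_\cU(\sR_f))$ are $\e$-interleaved as $\R$-graphs if and only if $\sR_f$ and $\cM_\cU(\sR_f)$ are $\e$-interleaved as constructible cosheaves. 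Taking the infimum over all such $\e$ on both sides yields
\[
d_R\big(\cD(\sR_f),\cD(\cM_\cU(\sR_f))\big)=d_I\big(\sR_f,\cM_\cU(\sR_f)\big)\le\res_f\cU .
\]

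Finally I would identify the two display locales with the objects appearing in the statement. By~\cite{deSilvaMunchPatel2016}, the Reeb graph $\cR(\X,f)$ is naturally isomorphic, as an $\R$-graph, to $\cD(\sR_f)$; and $\mathfrak{D}(\cM_\cU(\sR_f))$ is, by Proposition~\ref{prop:displaylocale} and the definition of the enhanced mapper graph given just after Theorem~\ref{thm:interleave}, an explicit geometric realization of $\cD(\cM_\cU(\sR_f))$ as an $\R$-graph. Since an isomorphism of $\R$-graphs is in particular a $0$-interleaving and hence leaves Reeb interleaving distances unchanged, substituting these identifications into the displayed inequality gives
\[
d_R\big(\cR(\X,f),\mathfrak{D}(\cM_\cU(\sR_f))\big)\le\res_f\cU ,
\]
which is the claim.

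Since the substantive work is already carried out in Theorem~\ref{thm:interleave} and Proposition~\ref{prop:mapper-functor}, the only genuinely delicate point in this argument is the bookkeeping in the last step: one must verify that replacing a constructible cosheaf by an isomorphic one, and replacing $\cD$ by the isomorphic realization $\mathfrak{D}$ (respectively $\cR(\X,f)$), does not change the Reeb interleaving distance — equivalently, that $d_R$ descends to isomorphism classes of $\R$-graphs and that the equivalence-of-categories statement of~\cite{deSilvaMunchPatel2016} is compatible with the $\e$-interleaving relations on both sides. This is routine, but it is where essentially all of the care is required; everything else is a short chain of (in)equalities.
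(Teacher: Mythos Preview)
Your proposal is correct and follows exactly the route the paper intends: the corollary is stated in the paper without an explicit proof, as an immediate consequence of Theorem~\ref{thm:interleave} together with the equivalence between constructible cosheaves and $\R$-graphs from~\cite{deSilvaMunchPatel2016}, which is precisely what you have written out in detail.
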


Throughout this section we introduce several categories and functors which we will now summarize. Let $\Rgraph$ be the category of $\R$-graphs (i.e., Reeb graphs), $\Rspacec$ the category of constructible $\R$-spaces, $\Cshc$ be the category of constructible cosheaves on $\R$, $\cS_\e$ and $\cT_\e$ the smoothing and thickening functors, $\mathfrak{D}$ the display locale functor, and $\cM_\cU$ the mapper functor. Altogether, we have the following diagram of functors and categories, 
\begin{center}
\begin{tikzpicture}[scale=1]
\node (A) at (-2,0) {$\Rgraph$};
\node (B) at (2,0) {$\Cshc$};
\node (C) at (-2,-2) {$\Rspacec$};
\path[->]
(A) edge [bend right] node[left] {$\cT_\varepsilon$} (C)
    edge [loop left] node[left] {$\cS_\varepsilon$} (A);
\path[->] (C) edge [bend right] node[below] {$\cC$} (B)
(C) edge [bend right] node[right] {$\cR$} (A);
\path[->]
(B) edge [loop right] node[right] {$\cM_\cU$.} (B)
edge [bend right] node[below] {$\mathfrak{D}$} (A);
\end{tikzpicture}

\end{center}

\para{Enhanced mapper graph algorithm.} Finally, we briefly describe an algorithm for constructing the enhanced mapper graph, following the example in Figure \ref{fig:enhanced-mapper}. Let $(\X,f)$ be a constructible $\R$-space (see Section \ref{sec:R_Spaces}). For simplicity, suppose that the cover $\cU$ consists of open intervals, and contains no nonempty triple intersections ($U\cap V\cap W=\emptyset$ for all $U,V,W\in\cU$). Let $\R_0$ be the union of boundary points of cover elements in the open cover $\cU$. Let $\R_1$ be the complement of $\R_0$ in $\R$. The set $\R_0$ is illustrated with gray dots in Figure \ref{fig:enhanced-mapper}(e). We begin by forming the disjoint union of closed intervals,  
\[\coprod_I \overline{I}\times \pi_0(f^{-1}(U_I)),\]
where the disjoint union is taken over all connected components $I$ of $\R_1$, $\overline{I}$ denotes the closure of the open interval $I$, and $U_I$ denotes the smallest open set in $\cU\cup \{U\cap V \mid U,V\in \cU\}$ which contains $I$. In other words, $U_I$ is either the intersection of two cover elements in $\cU$ or $U_I$ is equal to a cover element in $\cU$.  The sets $\pi_0(f^{-1}(U_I))$ are illustrated in Figure \ref{fig:enhanced-mapper}(d). Notice that there is a natural projection map from the disjoint union to $\R$, given by projecting each point $(y,a)$ in the disjoint union onto the first factor, $y\in\R$. The enhanced mapper graph is a quotient of the above disjoint union by an equivalence relation on endpoints of intervals. This equivalence relation is defined as follows. Let $(y,a) \in \overline{I}\times \pi_0(f^{-1}(U_I)) $ and $(z,b) \in \overline{J}\times \pi_0(f^{-1}(U_J))$ be two elements of the above disjoint union. If $y\in\R_0$, then $y$ is contained in exactly one cover element in $\cU$, denoted by $U_y$. Moreover,if $y\in\R_0$, then there is a map $\pi_0(f^{-1}(U_I))\rightarrow \pi_0(f^{-1}(U_y))$ induced by the inclusion $U_I\subseteq U_y$. Denote this map by $\psi_{(y,I)}$. An analogous map can be constructed for $(z,b)$, if $z\in \R_0$. We say that $(y,a)\sim(z,b)$ if two conditions hold: $y=z$ is contained in $\R_0$, and $\psi_{(y,I)}(a)=\psi_{(z,J)}(b)$. The \emph{enhanced mapper graph} is the quotient of the disjoint union by the equivalence relation described above. 

For example, as illustrated in Figure~\ref{fig:enhanced-mapper}, seven cover elements of $\mathcal{U}$ in (c) give rise to a stratification of $\R$ into a set of points $\R_0$ and a set of intervals $\R_1$ in (e). 
For each interval $I$ in $\R_1$, we look at the set of connected components in $f^{-1}(U_I)$. 
We then construct disjoint unions of closed intervals based on the cardinality of $\pi_0(f^{-1}(U_I))$ for each $I \in \R_1$.
For adjacent intervals $I_1$ and $I_2$ in $\R_1$, suppose that $I_1$ is contained in the cover element $V$ and $I_2$ is equal to the intersection of cover elements $V$ and $W$ in $\cU$. We consider the mapping from $\pi_0(f^{-1}(U_{I_2}))$ to $\pi_0(f^{-1}(U_{I_1}))$ (d). Here, we have that $U_{I_2}=V\cap W$ and $U_{I_1}= V$. 
We then glue these closed intervals following the above mapping, which gives rise to the enhanced mapper graph (g). 
\myblue{Appendix~\ref{sec:pseudocode} outlines these algorithmic details in the form of pseudocode.}


\section{Model}
\label{sec:Model}

Let $\X$ be a compact locally path connected subset of $\R^d$. As stated in the introduction, study related to topological inference usually splits between noiseless and noisy settings.
In the former, we assume that a given sample is drawn from $\X$ directly, while in the latter we allow random perturbations that produce samples in $\R^d$ that need not be on $\X$, but rather in its vicinity.
In this paper, we address the noisy setting directly, using the machinery for super-level sets estimation developed in~\citep{BobrowskiMukherjeeTaylor2017}.
The basic inputs are a continuous function $f:\R^d\rightarrow \R$, and a probability density function $p:\R^{d}\rightarrow \R$.
Our $\R$-space of interest will be $(\X, f\vert_\X)$, and we will assume we are provided samples of $\X$ via $p$.
Then, given a nice cover $\cU$, we can compare the Reeb graph of $(\X, f\vert_\X)$ to the mapper graph computed from the samples. 

\subsection{Setup}
\label{ssec:Model_Basics}

In this section, we give some basic assumptions on $f$, $p$, $\cU$, and their interactions.
We start with some notation for the various sets of interest.
Let $\X_\delta=\{y\in\R^d:\inf_{x\in \X}d(x,y)\le \delta\}$ be the $\delta$-thickening of $\X$, and let $D_L=p^{-1}([L,+\infty))$ be a super-level set of $p$.
Given an open set $V\subset \R$, define $\X^V:=\X\cap f^{-1}(V)$.
Let $\X_\delta^V := \X_\delta\cap f^{-1}(V)$ be the elements of $\X_\delta$ which map to $V$, and $D^V_L:=D_L\cap f^{-1}(V)$.
See \ref{fig:AnnulusExample_Notation} for an example of this notation.
It is important to note that $\X_\delta^V$ is not necessarily equal to the $\delta$-thickening of $\X^V$.

\begin{figure}
    \centering
    \includegraphics[width = .4\textwidth]{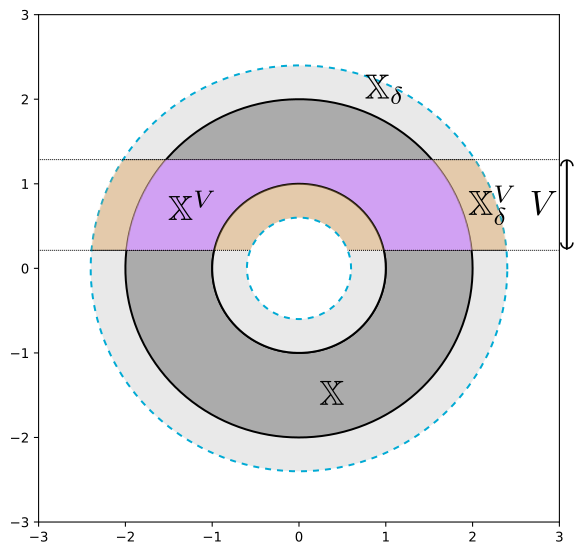}
    \caption{This figure illustrates the inverse images $\X^V$ (in purple) and $\X_\delta^V$ (union of tan and purple) for an annulus with height function and open interval $V$. Notice that in this example the $\delta$-thickening of $\X^V$ would include $\X_\delta^V$ as a proper subset, hence $(\X^V)_\delta\neq \X_\delta^V$. }
    \label{fig:AnnulusExample_Notation}
\end{figure}

With this notation, we will assume that $p$ is $\epsilon$-concentrated on $\X$ as defined next with an example given in  \ref{fig:AnnulusExample_Concetrated}.
\begin{definition}
  \label{defn:epsilon-concentrated}
A probability density function $p$ is \emph{$\epsilon$-concentrated on $\X$} if there exists open intervals $I_1,I_2$, and a real number $\delta>0$ such that
\[
\X\subset D_{L_1}\subset \X_\delta\subset D_{L_2}\subset \X_\epsilon,
\]
for any $L_1\in I_1$ and $L_2\in I_2$. 
\end{definition}
\begin{definition}
  \label{defn:concentrated}
A probability density function $p$ is \emph{concentrated on $\X$} if $p$ is $\epsilon$-concentrated on $\X$ for all $\epsilon>0$.
\end{definition}

\begin{figure}
    \centering
    \includegraphics[width = .3\textwidth]{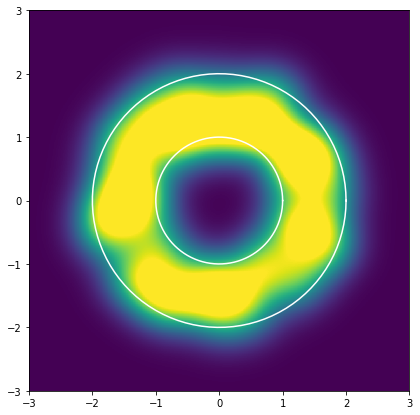}
    \includegraphics[width = .3\textwidth]{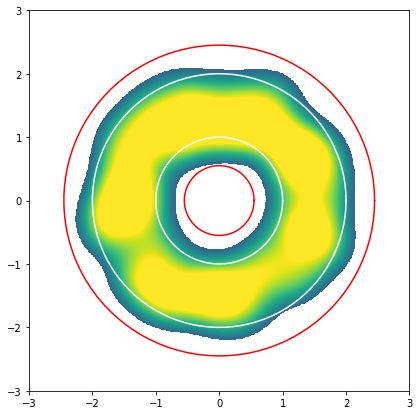}
    \includegraphics[width = .3\textwidth]{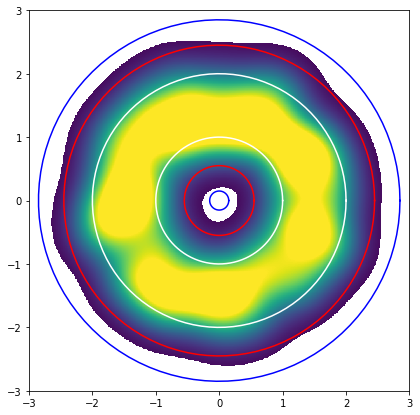}
    \caption{An example of the concentrated definition. The left side of the figure illustrates a probability density function (PDF) 
 $p$ which is $\epsilon$-concentrated on an annulus $\X$. The center image illustrates the thickened space $\X_\delta$, bounded by the red curves, and the super-level set $D_{L_1}$. The right side of the figure illustrates the thickened space $\X_\epsilon$, bounded by the blue curves, and the super-level set $ D_{L_2}$. Together, we see that $\X\subset D_{L_1}\subset \X_\delta\subset D_{L_2}\subset \X_\epsilon$. 
    }
    \label{fig:AnnulusExample_Concetrated}
\end{figure}

We now turn our attention to $\cU$, a nice cover of $\R$.

\begin{definition}
The \emph{local $H_0$-critical value over $V$} is defined as
$$ \delta_V=\sup\{\delta \mid  H_0(\X^V)\xrightarrow{\cong}H_0(\X^V_\delta)\}.$$

Let
$
\cU' :=\{V\subset \R: V=\cap_{\alpha\in A}U_\alpha \textrm{ for some } \{U_\alpha\}_{\alpha\in A}\subset \cU\}.
$
The \emph{global $H_0$-critical value over $\cU$} is defined as
\[\delta_\cU:=\min_{V\in\cU'}\delta_V.\]
\end{definition}

Throughout the paper, we assume that the global $H_0$-critical value over $\cU$ is positive, i.e.
   \[\delta_\cU=\min_{V\in\cU'}\delta_V>0.\]
The positivity of local $H_0$-critical values is nontrivial and often fails for constructible $\R$-spaces which have singularities which lie over the boundary of one of the open sets in the open cover $\cU$. In future work, it would be interesting to relax this assumption, and study convergence when the diameter of the union of open sets $V$ for which $\delta_V=0$, is small.

\subsection{Approximation by super-level sets}
\label{ssec:ApproximationBySuperLevelSets}
In this section, we study how super-level sets of probability density functions (PDFs) can model the topology of constructible $\R$-spaces.

We need some further control over the relationship between the PDF $p$ and the cover elements via the following definition.
\begin{definition}
  \label{defn:H0_Regular}
Given an open set $V$, we say that $L$ is \emph{$H_0$-regular over $V$} if there exists $\nu>0$ such that for all $\e_1<\e_2\in(L-\nu,L+\nu)$, the inclusion $D_{\e_2}\subset D_{\e_1}$ induces an isomorphism  $H_0(D^V_{\e_2})\xrightarrow{\cong}H_0(D^V_{\e_1})$.
\end{definition}
Throughout the paper we will assume that the PDF $p$ is \emph{tame}, in the sense that the set of points which are $H_0$-regular over $V$ is dense in $\R$, for any given open set $V$. 

Assume the global $H_0$-critical value $\delta_\cU$ is positive, and $p$ is $\delta_2$-concentrated on $\X$ for some $\delta_2$ such that $0<\delta_2<\delta_\cU$. By definition, there exist $L_1,L_2$ and $\delta_1$ such that 
    \begin{enumerate}\denselist
        \item $\X\subset D_{L_1}\subset \X_{\delta_1}\subset D_{L_2}\subset \X_{\delta_2}$
        \item $0<\delta_1<\delta_2<\delta_{\cU}$
        \item $L_1$ and $L_2$ are $H_0$-regular over $V$ for each $V\in\cU'$.
    \end{enumerate}

The set of points which are $H_0$-regular over $V$ for each $V\in \cU'$ is dense in $\R$. If $L_1$ is not $H_0$-regular over $V$ for some $V\in\cU'$, then $L_1$ can be turned into a regular value with an arbitrarily small perturbation. Moreover, by the Definition \ref{defn:epsilon-concentrated}, this perturbation can be done without breaking the chain of inclusions $\X\subset D_{L_1}\subset \X_{\delta_1}\subset D_{L_2}\subset\X_{\delta_2}$. We therefore continue under the assumption that $L_1$ is $H_0$-regular over $V$ for each $V\in\cU'$.

\begin{proposition}\label{prop:box}
Assume that $p$ is $\epsilon$-concentrated on $\X$ for some $\epsilon<\delta_\cU$. Let
\[
\sD(V):= \im \left(H_0(D_{L_1}^V)\rightarrow H_0(D_{L_2}^V)\right).
\]
Then for each $V \in \cU'$, we have $H_0(\X^V) \cong \sD(V) $
and further for each $V\subset W\in \cU'$ the following diagram commutes, 
\begin{equation*}
  \begin{tikzcd}
    H_0(\X^V) \ar[r,"\cong"] \ar[d] & \sD(V) \ar[d] \\
    H_0(\X^U) \ar[r,"\cong"] & \sD(U).
  \end{tikzcd}
\end{equation*}
\end{proposition}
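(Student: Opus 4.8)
The plan is to establish the isomorphism $H_0(\X^V)\cong\sD(V)$ by constructing a commuting ladder of inclusion-induced maps between the zeroth homologies (or equivalently $\pi_0$-sets, viewed in $H_0$) of the chain of spaces
\[
\X^V\subset D_{L_1}^V\subset \X_{\delta_1}^V\subset D_{L_2}^V\subset \X_{\delta_2}^V,
\]
which we get by intersecting the chain of inclusions from $\epsilon$-concentration (item 1 of the setup, with $\delta_2=\epsilon$) with $f^{-1}(V)$. First I would observe that, since $V\in\cU'$ and $\delta_2<\delta_\cU\le\delta_V$, the definition of the local $H_0$-critical value gives that $H_0(\X^V)\to H_0(\X_{\delta_2}^V)$ is an isomorphism; and since $\delta_1<\delta_2$, the intermediate map $H_0(\X^V)\to H_0(\X_{\delta_1}^V)$ is injective while $H_0(\X_{\delta_1}^V)\to H_0(\X_{\delta_2}^V)$ is surjective, and by the two-out-of-three/commutativity both are in fact isomorphisms. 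So all four maps
\[
H_0(\X^V)\to H_0(D_{L_1}^V)\to H_0(\X_{\delta_1}^V)\to H_0(D_{L_2}^V)\to H_0(\X_{\delta_2}^V)
\]
compose to an isomorphism $H_0(\X^V)\xrightarrow{\cong}H_0(\X_{\delta_2}^V)$, whence the first map $H_0(\X^V)\to H_0(D_{L_1}^V)$ is injective and the last map $H_0(D_{L_2}^V)\to H_0(\X_{\delta_2}^V)$ is surjective. A short diagram chase then identifies $\sD(V)=\im(H_0(D_{L_1}^V)\to H_0(D_{L_2}^V))$: the composite $H_0(\X^V)\to H_0(D_{L_1}^V)\to H_0(D_{L_2}^V)$ factors the isomorphism $H_0(\X^V)\cong H_0(\X_{\delta_2}^V)$ through $H_0(D_{L_2}^V)$, so $H_0(\X^V)\to\sD(V)$ is injective; and surjectivity of $H_0(D_{L_2}^V)\to H_0(\X_{\delta_2}^V)$ combined with the fact that every component of $D_{L_1}^V$ already contains a component of $\X^V$ (because $\X^V\subset D_{L_1}^V$ and $H_0(\X^V)\to H_0(\X_{\delta_2}^V)$ is onto) forces $\sD(V)$ to be exactly the image of $H_0(\X^V)$. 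This yields $H_0(\X^V)\cong\sD(V)$, natural in $V$.

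Second, for the commuting square: given $V\subset W$ in $\cU'$, all of $\X,D_{L_1},\X_{\delta_1},D_{L_2},\X_{\delta_2}$ are fixed subsets of $\R^d$, so intersecting with $f^{-1}(V)\subset f^{-1}(W)$ produces, for each of these five spaces, an inclusion of the $V$-piece into the $W$-piece, and all the resulting squares commute because they are all induced by honest inclusions of subspaces of $\R^d$ (functoriality of $H_0$ on the poset of subspaces). The square in the statement is obtained by pasting the naturality square for $H_0(\X^{(-)})\to H_0(D_{L_1}^{(-)})$ with the naturality square for $H_0(D_{L_1}^{(-)})\to H_0(D_{L_2}^{(-)})$ and then restricting the bottom-right corner to the image subgroup $\sD(-)$; since the isomorphisms $H_0(\X^{(-)})\cong\sD(-)$ are exactly these composites, the square commutes on the nose. (Note $U$ in the displayed diagram should read $W$; I would fix that typo.)

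The one genuinely delicate point — and the step I expect to be the main obstacle — is justifying that the maps in the $\delta$-chain restricted to $f^{-1}(V)$ still behave as claimed, because, as the paper itself emphasizes after Figure~\ref{fig:AnnulusExample_Notation}, $\X_\delta^V\neq(\X^V)_\delta$ in general, so one cannot simply quote the local $H_0$-critical value estimate for the space $\X^V$ in isolation. The resolution is that $\delta_V$ is \emph{defined} in terms of the map $H_0(\X^V)\to H_0(\X_\delta^V)$ directly (not via thickening $\X^V$ inside $f^{-1}(V)$), so the hypothesis $\delta_2<\delta_\cU\le\delta_V$ is precisely the statement we need; the role of this proof step is just to make sure every arrow is applied to the correct one of $\X^V$ versus $(\X^V)_\delta$. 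Similarly, the $H_0$-regularity of $L_1,L_2$ over every $V\in\cU'$ (item 3 of the setup) is what guarantees the super-level-set spaces $D_{L_i}^V$ sit \emph{stably} between the thickenings, so that the four-term factorization above is valid; I would state this dependence explicitly. Once these bookkeeping issues are handled, the argument is a routine two-out-of-three plus diagram chase.
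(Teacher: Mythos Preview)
Your overall strategy coincides with the paper's: restrict the $\epsilon$-concentration chain $\X\subset D_{L_1}\subset \X_{\delta_1}\subset D_{L_2}\subset \X_{\delta_2}$ to $f^{-1}(V)$, use $\delta_1<\delta_2<\delta_\cU\le\delta_V$ to get the isomorphisms $H_0(\X^V)\cong H_0(\X_{\delta_1}^V)\cong H_0(\X_{\delta_2}^V)$, and then chase. The paper isolates the chase as a standalone five-term lemma (Lemma~\ref{lemma:interleave}) applied with $C=H_0(\X^V)$, $A=H_0(D_{L_1}^V)$, $D=H_0(\X_{\delta_1}^V)$, $B=H_0(D_{L_2}^V)$, $E=H_0(\X_{\delta_2}^V)$; your naturality argument for the square $V\subset W$ and your remarks about $\X_\delta^V$ versus $(\X^V)_\delta$ are correct and match the paper.

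There is one genuine gap in your surjectivity step. You claim that ``every component of $D_{L_1}^V$ already contains a component of $\X^V$'', citing $\X^V\subset D_{L_1}^V$ and the surjectivity of $H_0(\X^V)\to H_0(\X_{\delta_2}^V)$. That inference is invalid: nothing rules out an extra component of $D_{L_1}^V$ disjoint from $\X^V$ which only merges with an $\X^V$-component once you pass to $\X_{\delta_1}^V$. (Concretely, a small bump of $p$ just off $\X$ but inside $\X_{\delta_1}$ can create such a component.) The correct chase uses the middle term: given $z\in\sD(V)$, lift to $y\in H_0(D_{L_1}^V)$, push $y$ to $H_0(\X_{\delta_1}^V)$, invoke $H_0(\X^V)\xrightarrow{\cong}H_0(\X_{\delta_1}^V)$ to find $x\in H_0(\X^V)$ with the same image there, and conclude that $x$ and $y$ map to the same element of $H_0(D_{L_2}^V)$. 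This is precisely what the paper's lemma packages, and it is the step your sketch does not supply.

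A minor side remark: $H_0$-regularity of $L_1,L_2$ is not used here. The inclusion chain comes straight from the definition of $\epsilon$-concentrated; regularity only enters later, in the KDE comparison.
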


The proof will require the following technical lemma.
\begin{lemma}
\label{lemma:interleave}
Suppose we have the following commutative diagram of vector spaces
      \begin{center}
\begin{tikzpicture}[scale=1]
\node (A) at (-2,1) {$A$};
\node (B) at (2,1) {$B$};
\node (C) at (0,0) {$D$};
\node (H) at (-4,0) {$C$};
\node (J) at (4,0) {$E$};
\path[->] (H) edge node[above] {$\cong$}(C);
\path[->] (C) edge node[above] {$\cong$} (J);
\path[->] (A) edge (B);
\path[->] (A) edge (C);
\path[->] (C) edge (B);
\path[->] (H) edge (A);
\path[->] (B) edge (J);
\end{tikzpicture}
\end{center}
with $C\cong D\cong E$. Then $\im(D\rightarrow B)= \im(A\rightarrow B)$ and the map
\[
D\xrightarrow{\cong} \im(A\rightarrow B)
\]
is an isomorphism of vector spaces.
\end{lemma}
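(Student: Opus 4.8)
The plan is to exploit the two isomorphisms $C \xrightarrow{\cong} D$ and $D \xrightarrow{\cong} E$ together with the commutativity of the triangles to squeeze $\im(D \to B)$ between two copies of $\im(A \to B)$. First I would observe that the diagram gives two composites into $B$: the path $A \to D \to B$ (which factors the map $A \to B$, so $\im(A \to B) \subseteq \im(D \to B)$ is immediate from commutativity of the lower-left triangle $A \to D \to B$ — wait, we must read the arrows carefully: the diagram has $A \to D$, $D \to B$, and $A \to B$, with the triangle $A,D,B$ commuting, so indeed $A \to B$ equals $A \to D \to B$, giving $\im(A\to B)\subseteq \im(D\to B)$).

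Next I would establish the reverse inclusion $\im(D \to B) \subseteq \im(A \to B)$. Here is where the isomorphisms do the work. Since $C \xrightarrow{\cong} D$ is an isomorphism, every element of $D$ is the image of some element of $C$; and the triangle $C \to A \to D$ (using the arrows $C \to A$ and $A \to D$) together with the isomorphism $C \to D$ forces $A \to D$ to be \emph{surjective} — indeed $C \to D$ factors as $C \to A \to D$, and a composite that is an isomorphism has surjective second factor. Therefore $A \to D$ is surjective, so $\im(D \to B) = \im(A \to D \to B) = \im(A \to B)$. Combined with the first inclusion this yields $\im(D \to B) = \im(A \to B)$, which is the first claim.

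For the second claim, that $D \xrightarrow{\cong} \im(A \to B)$ via the map $D \to B$ (corestricted to its image, which we have just identified with $\im(A\to B)$), I would show the map $D \to B$ is injective. Using the other isomorphism: $D \to E$ is injective (it is an isomorphism), and by commutativity of the triangle $D \to B \to E$ (arrows $D \to B$ and $B \to E$), the composite $D \to B \to E$ equals the isomorphism $D \to E$; since this composite is injective, its first factor $D \to B$ is injective. An injective linear map is an isomorphism onto its image, and that image is $\im(D\to B) = \im(A \to B)$, completing the proof.

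The main obstacle is purely bookkeeping: one must be careful about the exact orientation of each arrow in the given diagram (in particular which of the triangles $CAD$ and $DBE$ commute and in which direction), since the argument is a chase that uses \emph{surjectivity of $A\to D$} on one side and \emph{injectivity of $D\to B$} on the other, each deduced from a factorization of an isomorphism. Once the arrows are pinned down, no calculation is needed beyond ``a composite isomorphism has surjective last factor and injective first factor.'' I would also note that the hypotheses $C \cong D \cong E$ are used only through the specific isomorphisms appearing in the diagram, not as abstract isomorphisms, so the statement should really be read with those structure maps fixed.
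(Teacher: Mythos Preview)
Your proof is correct and follows essentially the same approach as the paper's: both arguments deduce injectivity of $D\to B$ from the factorization $D\to B\to E$ of the isomorphism $D\to E$, and both obtain $\im(D\to B)\subseteq\im(A\to B)$ by lifting through $C$ via the isomorphism $C\to D$. The only cosmetic difference is that you package the second step as ``$A\to D$ is surjective because it is the second factor of the isomorphism $C\to D$,'' whereas the paper spells this out as an element chase (pick $d$, lift to $c$, push to $a$); these are the same observation.
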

\begin{proof}
The map $D\rightarrow B$ is injective since $D\rightarrow E$ is an isomorphism and the diagram commutes. Therefore, $\im(D\rightarrow B)\cong D$. Moreover, since the diagram commutes, $\im(A\rightarrow B)\subset \im(D\rightarrow B)$. Suppose $b\in \im(D\rightarrow B)$, i.e., there exists $d\in D$ which maps to $b$. Since $C\rightarrow D$ is an isomorphism, there exists $c\in C$ which maps to $d\in D$. Let $a\in A$ be the image of $c\in C$ under the map $C\rightarrow A$. Since the diagram commutes, we have that $a\in A$ maps to $b\in B$ under the map $A\rightarrow B$. Therefore, $b\in\im(A\rightarrow B)$. We have therefore shown that $\im(A\rightarrow B)=\im(D\rightarrow B)\cong D$.
\qed
\end{proof}

\begin{proof}[Proof of \ref{prop:box}]
Choose $\delta_2>0$ such that $\delta_2<\delta_\cU$ and $p$ is $\delta_2$-concentrated on $\X$. Applying the definition of $\delta_2$-concentrated, we have $\X\subset D_{L_1}\subset \X_{\delta_1}\subset D_{L_2}\subset \X_{\delta_2}$. For $V\subset W$ we have the following commutative diagram of vector spaces
   \begin{center}
\begin{tikzpicture}[scale=1]
\node (A) at (-2,1) {$H_0(D_{L_1}^V)$};
\node (B) at (2,1) {$H_0(D^V_{L_2})$};
\node (C) at (0,0) {$H_0(\X^V_{\delta_1})$};
\node (D) at (-2,-3) {$H_0(D_{L_1}^W)$};
\node (E) at (2,-3) {$H_0(D_{L_2}^W)$};
\node (F) at (0,-2) {$H_0(\X^W_{\delta_1})$};
\node (G) at (-4,-2) {$H_0(\X^W)$};
\node (H) at (-4,0) {$H_0(\X^V)$};
\node (I) at (4,-2) {$H_0(\X^W_{\delta_2})$};
\node (J) at (4,0) {$H_0(\X^V_{\delta_2})$};
\path[->] (H) edge (C);
\path[->] (C) edge (J);
\path[->] (G) edge (F);
\path[->] (F) edge (I);
\path[->] (A) edge (B);
\path[->] (A) edge (D);
\path[->] (A) edge (C);
\path[->] (B) edge (E);
\path[->] (C) edge (B);
\path[->] (D) edge (E);
\path[->] (D) edge (F);
\path[->] (C) edge (F);
\path[->] (F) edge (E);
\path[->] (H) edge (A);
\path[->] (B) edge (J);
\path[->] (G) edge (D);
\path[->] (E) edge (I);
\path[->] (J) edge (I);
\path[->] (H) edge (G);

\end{tikzpicture}
\end{center}
Since $\delta_1 < \delta_2<\delta_\cU$, by definition of global $H_0$-critical value over $\cU$, all four horizontal maps
\[
H_0(\X^V)\longrightarrow H_0(\X_{\delta_1}^V)\longrightarrow H_0(\X_{\delta_2}^V)
\qquad
\text{and}
\qquad
H_0(\X^W)\longrightarrow H_0(\X_{\delta_1}^W)\longrightarrow H_0(\X_{\delta_2}^W)
\]
are isomorphisms. Applying \ref{lemma:interleave}, we can conclude that
\[
H_0(\X^V)\longrightarrow\sD(V)
\qquad
\text{and}
\qquad
H_0(\X^W)\longrightarrow\sD(W)
\]
are isomorphisms of vector spaces.
Since the diagram commutes, the image of $\sD(V)$ under the map $H_0(D_{L_2}^V)\rightarrow H_0(D_{L_2}^W)$ is contained in $\sD(W)$.
Therefore, $H_0(D_{L_2}^V)\rightarrow H_0(D_{L_2}^W)$ induces a map $\sD(V)\rightarrow \sD(W)$, which completes the commutative diagram of the theorem.
\qed
\end{proof}
\subsection{Point-cloud mapper algorithm}
\label{ssec:PointCloudMapper}
 Given data $\{X_1,\cdots, X_n\}\overset{\text{i.i.d}}{\sim}p$, where $p$ is a PDF, we can estimate $p$ using a kernel density estimator (KDE) of the form,
    \[\hat{p}(x):=\frac{1}{C_K n r^d}\sum_{i=1}^nK_r(x-X_i),\]
    where $K(x)$ is a given kernel function, $K_r := K(x/r)$, and $C_K$ is a constant defined below. The kernel function should satisfy the following:
\begin{enumerate}\denselist
    \item $\mathrm{supp}(K)\subset B_1(0)$, and $K(x)$ is smooth in $B_1(0)$.
    \item $K(x) \in [0,1]$, and $\max_x K(x) = K(0) = 1$,
    \item $\int_{\R^d} K(x)dx = C_K$ with $C_K\in (0,1)$.
\end{enumerate}
    Using $\hat p$ we can estimate the super-level sets $D_L$ using
\begin{equation}
\label{eqn:def_DL}
\hat{D}_L(n,r):=\bigcup_{i: \hat p(X_i) \ge L} B_r(X_i),
\end{equation}
and the sets $D_L^V$ using
\begin{equation}
\label{eqn:def_DLV}
\hat{D}^V_L(n,r):= \hat{D}_L(n,r)\cap f^{-1}(V).
\end{equation}
Choose $\varepsilon_i$ such that $L_i+2\varepsilon_i,L_i-2\varepsilon_i$ are within the $H_0$-regularity range of $L_i$ over $V$ for each $V\in\cU$ and $L_1-2\varepsilon_1 >L_2+2\varepsilon_2$. In the following, we will use the term ``with high probability" (w.h.p.) to mean  that the probability of an event to occur converges to $1$ as $n\to \infty$.
\begin{proposition}\label{BMT:thm}
Fix $L$ and $V$, and set $\hat D_L^V := \hat D_L^V(n,r)$. Fixing $\varepsilon>0$, there exists a constant $C_\varepsilon >0$ (independent of $L$ and $V$) such that if $nr^d \ge C_\varepsilon \log n$, then the following diagram of inclusion relations holds w.h.p., 
     \begin{center}
\begin{tikzpicture}[scale=1]
\node (A) at (-2,1) {$\hat{D}_{L+\varepsilon}^V$};
\node (B) at (2,1) {$\hat{D}_{L-\varepsilon}^V$};
\node (C) at (0,0) {$D_{L}^V$};
\node (H) at (-4,0) {$D_{L+2\varepsilon}^V$};
\node (J) at (4,0) {$D_{L-2\varepsilon}^V$};
\path[right hook->] (H) edge (C);
\path[right hook->] (C) edge (J);
\path[right hook->] (A) edge (B);
\path[right hook->] (A) edge (C);
\path[right hook->] (C) edge (B);
\path[right hook->] (H) edge (A);
\path[right hook->] (B) edge (J);
\end{tikzpicture}
\end{center}
\end{proposition}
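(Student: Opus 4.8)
The plan is to deduce the entire ladder of inclusions from two uniform estimates on the kernel density estimator $\hat p$, both established in \cite{BobrowskiMukherjeeTaylor2017}; the only extra work is a short bookkeeping argument. Since intersecting every set in the diagram with $f^{-1}(V)$ preserves inclusions --- and a diagram all of whose arrows are inclusions of subsets commutes automatically --- it suffices to prove the unrestricted chain
\[
D_{L+2\varepsilon}\subseteq\hat D_{L+\varepsilon}\subseteq D_L\subseteq\hat D_{L-\varepsilon}\subseteq D_{L-2\varepsilon}
\]
with probability tending to $1$; the remaining four arrows of the pentagon then follow by transitivity. This reduction makes the required constant independent of $V$ for free, and the argument below produces a $C_\varepsilon$ depending only on $\varepsilon$, the dimension $d$, the kernel $K$, $\|p\|_\infty$, and the fixed range in which $L$ lies, hence independent of the particular value of $L$ as well. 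Throughout I work over a compact neighbourhood of $\X$ containing $D_{L\pm 2\varepsilon}$ (available because $p$ is $\varepsilon$-concentrated on $\X$), and I use the standing bandwidth hypothesis of \cite{BobrowskiMukherjeeTaylor2017} that $r=r_n\to 0$.

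The first ingredient is uniform sup-norm control, $\|\hat p-p\|_\infty<\varepsilon/2$ w.h.p. For this I would apply a standard VC-type uniform deviation bound for kernel density estimators (using that $\{x\mapsto K((x-u)/r)\}_u$ is a bounded class of polynomial discrimination) to get a constant $C_\varepsilon$ such that $nr^d\ge C_\varepsilon\log n$ forces $\|\hat p-\mathbb{E}\hat p\|_\infty<\varepsilon/4$ w.h.p.; combined with the elementary bias bound $\|\mathbb{E}\hat p-p\|_\infty\le\omega_p(r_n)\to 0$ (which uses continuity of $p$ on the compact region, as assumed in \cite{BobrowskiMukherjeeTaylor2017}, and $r_n\to 0$), this gives $\|\hat p-p\|_\infty<\varepsilon/2$. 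I also record the consequence $|p(y)-p(X_i)|<\varepsilon/4$ whenever $\|y-X_i\|\le r$, for $n$ large.

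The second ingredient is uniform $r$-coverage: w.h.p. every point of $D_L$ lies within distance $r$ of some sample $X_i$. Here I would fix a minimal $(r/2)$-net of the compact set $D_L$, of cardinality $O(r^{-d})$; for $n$ large each net ball has $p$-mass at least a constant times $r^d$ (using $p\ge L>0$ and continuity near $D_L$), so a union bound shows that the probability that some net ball contains no sample is at most $O(r^{-d})e^{-cnr^d}\le O(r^{-d})\,n^{-cC_\varepsilon}$, which tends to $0$ once $C_\varepsilon$ is large, since $r^{-d}\le n/(C_\varepsilon\log n)$ is only polynomial in $n$. Conditioning on the two w.h.p.\ events, the four inclusions of the chain follow directly: if $y\in\hat D_{L+\varepsilon}$ (resp.\ $\hat D_{L-\varepsilon}$) then some $X_i$ with $\|X_i-y\|\le r$ has $\hat p(X_i)\ge L+\varepsilon$ (resp.\ $\ge L-\varepsilon$), hence $p(y)\ge\hat p(X_i)-\|\hat p-p\|_\infty-\omega_p(r)>L$ (resp.\ $>L-2\varepsilon$); conversely if $y\in D_{L+2\varepsilon}$ (resp.\ $D_L$) then the coverage event supplies $X_i$ with $\|X_i-y\|\le r$, and then $\hat p(X_i)\ge p(y)-\omega_p(r)-\|\hat p-p\|_\infty>L+\varepsilon$ (resp.\ $>L-\varepsilon$), so $y\in B_r(X_i)\subseteq\hat D_{L+\varepsilon}$ (resp.\ $\hat D_{L-\varepsilon}$). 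Intersecting everything with $f^{-1}(V)$ completes the proof.

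I expect the only real obstacle to be the coverage estimate of the second step --- the one place where the hypothesis $nr^d\ge C_\varepsilon\log n$ is genuinely used. There one must keep the covering number of $D_L$ polynomial in $n$ \emph{and} bound the $p$-mass of small balls from below uniformly over all of $D_L$, and then verify that the constant so produced does not secretly depend on $L$ or $V$; this is exactly where compactness (via $\varepsilon$-concentration) and the tameness of $p$ near the level $L$ enter. Since all of this is carried out in \cite{BobrowskiMukherjeeTaylor2017}, in the write-up I would phrase the proposition as an application of their results and include only the reduction and deduction above.
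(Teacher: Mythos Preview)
Your proposal is correct and aligns with the paper's own treatment: the paper does not give an independent argument but simply records that the proposition is contained in the proof of Theorem~3.3 of \cite{BobrowskiMukherjeeTaylor2017}. Your write-up goes further by sketching the two standard ingredients (uniform sup-norm control of $\hat p$ and an $r$-coverage estimate for $D_L$) and the reduction to the unrestricted chain via intersection with $f^{-1}(V)$; this is exactly the content of the cited argument, so nothing is lost or gained relative to the paper beyond the added detail.
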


\begin{proof}
The proof appears as part of the proof of Theorem 3.3 in {\citep{BobrowskiMukherjeeTaylor2017}}.
\qed
\end{proof}
Next, define the random vector space
\[ \hat{\sD}_i(V):= \im \left(H_0(\hat{D}_{L_i+\varepsilon_i}^V)\rightarrow H_0(\hat{D}_{L_i-\varepsilon_i}^V)\right).\]
\begin{corollary}\label{cor:isom}
If $nr^d \ge C_{\varepsilon_i}\log n$, then w.h.p. the random map
\[ H_0(D_{L_i}^V)\rightarrow \hat{\sD}_i(V)\]
is an isomorphism.
\end{corollary}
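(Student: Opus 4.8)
The plan is to apply the $H_0$ functor (with field coefficients, so that everything lives in vector spaces) to the diagram of inclusions furnished by Proposition \ref{BMT:thm}, and then to recognize the resulting commutative diagram of vector spaces as an instance of Lemma \ref{lemma:interleave}. Concretely, I would first invoke Proposition \ref{BMT:thm} with $L=L_i$ and $\varepsilon=\varepsilon_i$: since $nr^d\ge C_{\varepsilon_i}\log n$, there is an event of probability tending to $1$ on which the five sets $\hat D_{L_i+\varepsilon_i}^V$, $\hat D_{L_i-\varepsilon_i}^V$, $D_{L_i}^V$, $D_{L_i+2\varepsilon_i}^V$, $D_{L_i-2\varepsilon_i}^V$ fit into the displayed commutative square of inclusions. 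Applying $H_0$ turns this into a commutative diagram of vector spaces and linear maps, which I would match to the diagram of Lemma \ref{lemma:interleave} via the identifications $A=H_0(\hat D_{L_i+\varepsilon_i}^V)$, $B=H_0(\hat D_{L_i-\varepsilon_i}^V)$, $D=H_0(D_{L_i}^V)$, $C=H_0(D_{L_i+2\varepsilon_i}^V)$, $E=H_0(D_{L_i-2\varepsilon_i}^V)$.

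Next I would verify the hypothesis $C\cong D\cong E$ of Lemma \ref{lemma:interleave}. The value $\varepsilon_i$ was chosen so that both $L_i+2\varepsilon_i$ and $L_i-2\varepsilon_i$ lie within the $H_0$-regularity range of $L_i$ over $V$; hence, by Definition \ref{defn:H0_Regular}, the inclusions $D_{L_i+2\varepsilon_i}^V\subset D_{L_i}^V\subset D_{L_i-2\varepsilon_i}^V$ induce isomorphisms $H_0(D_{L_i+2\varepsilon_i}^V)\xrightarrow{\cong}H_0(D_{L_i}^V)\xrightarrow{\cong}H_0(D_{L_i-2\varepsilon_i}^V)$, i.e.\ $C\xrightarrow{\cong}D\xrightarrow{\cong}E$. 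This is the only place where the choice of $\varepsilon_i$ and the tameness / $H_0$-regularity assumptions on $p$ enter.

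With these checks in place, Lemma \ref{lemma:interleave} gives $\im(D\to B)=\im(A\to B)$ together with an isomorphism $D\xrightarrow{\cong}\im(A\to B)$. Unwinding the identifications, $\im(A\to B)=\hat\sD_i(V)$ by definition of $\hat\sD_i(V)$, and the map $D\to\im(A\to B)$ is precisely the corestriction onto its image of $H_0(D_{L_i}^V)\to H_0(\hat D_{L_i-\varepsilon_i}^V)$, which is the map named in the statement. Thus on the high-probability event the map $H_0(D_{L_i}^V)\to\hat\sD_i(V)$ is an isomorphism, and since that event has probability converging to $1$, the conclusion holds w.h.p.

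I do not expect a genuine obstacle here: the probabilistic content is entirely absorbed into Proposition \ref{BMT:thm} (quoted from \citep{BobrowskiMukherjeeTaylor2017}) and the homological-algebra content into Lemma \ref{lemma:interleave}. The only point requiring care is bookkeeping — checking that the direction of each inclusion is correct (super-level sets shrink as the threshold increases, and intersecting with $f^{-1}(V)$ preserves inclusions), so that the arrows line up with those of Lemma \ref{lemma:interleave}, and confirming that the map singled out in the corollary is the one produced by the lemma rather than some other composite in the diagram.
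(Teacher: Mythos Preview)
Your proposal is correct and matches the paper's approach exactly: the paper's proof consists of the single sentence ``The corollary follows from applying Lemma~\ref{lemma:interleave} to Proposition~\ref{BMT:thm},'' and you have spelled out precisely how that application works, including the identification of $A,B,C,D,E$ and the use of $H_0$-regularity of $L_i$ to verify the hypothesis $C\cong D\cong E$.
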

\begin{proof}
The corollary follows from applying \ref{lemma:interleave} to \ref{BMT:thm}.
\qed
\end{proof}
From here on, unless otherwise stated, we will assume that $r$ is chosen so that $nr^d \ge \max(C_{\varepsilon_1}, C_{\varepsilon_2})\log n$, so that \ref{cor:isom} holds for both $\varepsilon_1,\varepsilon_2$.

\begin{proposition}
  \label{prop:level1-box}
For every $V\subset W\in \cU'$,
we have the following commutative diagram w.h.p., 
\begin{center}
\begin{tikzpicture}[scale=1]
\node at (0,0) {$\circlearrowright$};
\node (A) at (-1,1) {$H_0(D_{L_i}^V)$};
\node (B) at (1,1) {$ \hat{\sD}_i(V)$};
\draw[<-] (B) edge node[above] {$\cong$} (A);
\node (C) at (-1,-1) {$H_0(D_{L_i}^W)$};
\node (D) at (1,-1) {$ \hat{\sD}_i(W)$.};
\path[<-] (D) edge node[above] {$\cong$} (C);
\path[->] (A) edge (C);
\path[->] (B) edge (D);
\end{tikzpicture}
\end{center}
\end{proposition}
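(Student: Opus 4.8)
The plan is to exhibit every arrow in the square as the map induced on $H_0$ by an inclusion of subsets of $\R^d$, so that commutativity becomes automatic from the functoriality of $H_0$, and then to invoke Corollary~\ref{cor:isom} to see that the two horizontal arrows are isomorphisms. Throughout we work under the standing assumptions of Section~\ref{sec:Model} and fix $i\in\{1,2\}$.

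First I would record the inclusions coming from $V\subseteq W$. Since $f^{-1}(V)\subseteq f^{-1}(W)$, intersecting with $D_L$ and with $\hat D_L(n,r)$ gives $D_L^V\subseteq D_L^W$ and $\hat D_L^V\subseteq\hat D_L^W$ for every level $L$; in particular the left vertical map $H_0(D_{L_i}^V)\to H_0(D_{L_i}^W)$ is inclusion-induced. To obtain the right vertical map, observe that the square
\begin{equation*}
\begin{tikzcd}
H_0(\hat D_{L_i+\varepsilon_i}^V)\ar[r]\ar[d] & H_0(\hat D_{L_i-\varepsilon_i}^V)\ar[d]\\
H_0(\hat D_{L_i+\varepsilon_i}^W)\ar[r] & H_0(\hat D_{L_i-\varepsilon_i}^W)
\end{tikzcd}
\end{equation*}
commutes (all four arrows are inclusion-induced), so the right-hand map carries $\hat{\sD}_i(V)=\im\big(H_0(\hat D_{L_i+\varepsilon_i}^V)\to H_0(\hat D_{L_i-\varepsilon_i}^V)\big)$ into $\hat{\sD}_i(W)$; restricting it defines the map $\hat{\sD}_i(V)\to\hat{\sD}_i(W)$ appearing on the right of the statement.

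Next I would identify the horizontal isomorphisms. Unwinding the proof of Corollary~\ref{cor:isom} — namely the application of Lemma~\ref{lemma:interleave} to the diagram of Proposition~\ref{BMT:thm} at $L=L_i$, $\varepsilon=\varepsilon_i$ over $V$ — shows that the isomorphism $H_0(D_{L_i}^V)\xrightarrow{\cong}\hat{\sD}_i(V)$ is precisely the corestriction to its image of the inclusion-induced map $H_0(D_{L_i}^V)\to H_0(\hat D_{L_i-\varepsilon_i}^V)$, and this holds w.h.p.; likewise over $W$. Now apply $H_0$ to the commuting square of inclusions
\begin{equation*}
\begin{tikzcd}
D_{L_i}^V\ar[r]\ar[d] & \hat D_{L_i-\varepsilon_i}^V\ar[d]\\
D_{L_i}^W\ar[r] & \hat D_{L_i-\varepsilon_i}^W
\end{tikzcd}
\end{equation*}
and corestrict the bottom-right corner to $\hat{\sD}_i(W)$: this is legitimate because both composites $H_0(D_{L_i}^V)\to H_0(\hat D_{L_i-\varepsilon_i}^W)$ factor through $H_0(D_{L_i}^W)\to H_0(\hat D_{L_i-\varepsilon_i}^W)$, whose image is $\hat{\sD}_i(W)$ by the $W$-case of the previous sentence. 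The resulting commuting square has top and bottom arrows the isomorphisms just described, left arrow the inclusion-induced $H_0(D_{L_i}^V)\to H_0(D_{L_i}^W)$, and right arrow the map $\hat{\sD}_i(V)\to\hat{\sD}_i(W)$ built in the second paragraph — that is, it is exactly the square in the statement.

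For the w.h.p. assertion, the construction only uses the inclusion diagrams of Proposition~\ref{BMT:thm} at level $L_i$ over the two sets $V$ and $W$, together with the isomorphisms of Corollary~\ref{cor:isom} over $V$ and $W$; these amount to finitely many events each holding with probability tending to $1$, so their conjunction also holds w.h.p., and on that event all of the identifications above are valid. The argument is essentially a diagram chase, so there is no single hard step; the only point requiring care is the bookkeeping in the previous paragraph — checking that the isomorphisms produced by Lemma~\ref{lemma:interleave} are the ones induced by the inclusions $D_{L_i}^{(\cdot)}\subseteq\hat D_{L_i-\varepsilon_i}^{(\cdot)}$, so that they are compatible with the inclusion-induced vertical maps, and that the relevant images remain inside $\hat{\sD}_i(W)$. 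Once those identifications are in place, functoriality of $H_0$ finishes the proof.
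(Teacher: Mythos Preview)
Your argument is correct and is essentially the same as the paper's: the paper simply says the proof ``follows the same arguments as the proof of Proposition~\ref{prop:box}, and using Corollary~\ref{cor:isom},'' which amounts to building the analogous diagram of inclusion-induced maps (the hatted version of the diagram in the proof of Proposition~\ref{prop:box}) and reading off commutativity from functoriality of $H_0$, with Corollary~\ref{cor:isom} supplying the horizontal isomorphisms w.h.p. You have spelled out explicitly the one point the paper leaves implicit---that the isomorphism furnished by Lemma~\ref{lemma:interleave} is the corestriction of the inclusion-induced map $H_0(D_{L_i}^{\,\cdot})\to H_0(\hat D_{L_i-\varepsilon_i}^{\,\cdot})$, so that it is automatically compatible with the vertical inclusion-induced maps.
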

\begin{proof}
 The proof follows the same arguments as the proof of Proposition \ref{prop:box}, and using Corollary \ref{cor:isom}.
 \qed
\end{proof}

    Finally, we define the following random vector space, $$\hat{\sD}(V) : = \im \left( H_0(\hat{D}^V_{L_1+\varepsilon_1})\rightarrow H_0(\hat{D}^V_{L_2-\varepsilon_2})\right). $$
\begin{proposition}\label{prop:level2-box}
Assume that $p$ is $\epsilon$-concentrated on $\X$ for some $\epsilon<\delta_\cU$. For every $V\subset W\in \cU'$, we have the following commutative diagram w.h.p., 
    \begin{center}
\begin{tikzpicture}[scale=1]
\node at (0,0) {$\circlearrowright$};
\node (A) at (-1,1) {$H_0(\X^V)$};
\node (B) at (1,1) {$\hat{\sD}(V) $};
\path[<-] (B) edge node[above] {$\cong$} (A);
\node (C) at (-1,-1) {$H_0(\X^W)$};
\node (D) at (1,-1) {$\hat{\sD}(W),$};
\path[<-] (D) edge node[above] {$\cong$} (C);
\path[->] (A) edge (C);
\path[->] (B) edge (D);
\end{tikzpicture}
\end{center}
where the constants $L_1$ and $L_2$ (defining $\hat{\sD}$) are given by the definition of $\epsilon$-concentrated, and the constants $\varepsilon_1$ and $\varepsilon_2$ are given by the $H_0$-regularity of $L_1$ and $L_2$, respectively. 
\end{proposition}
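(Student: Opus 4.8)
The plan is to transcribe the argument used for Proposition~\ref{prop:box}, but with the estimated super-level sets $\hat D$ inserted into the nested family of spaces, so that Lemma~\ref{lemma:interleave} directly produces the isomorphism $H_0(\X^V)\cong\hat\sD(V)$; the commuting square over $V\subset W$ will then follow from functoriality, exactly as in Proposition~\ref{prop:box}.

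First I would fix $V\in\cU'$ and assemble, with high probability, a chain of inclusions of subsets of $f^{-1}(V)$. Shrinking $\varepsilon_1,\varepsilon_2$ if necessary so that, in addition to the constraints already imposed in Section~\ref{ssec:PointCloudMapper} ($H_0$-regularity of $L_i$ over every $V\in\cU'$, with $L_i\pm2\varepsilon_i$ in the regularity range, and $L_1-2\varepsilon_1>L_2+2\varepsilon_2$), the values $L_i\pm2\varepsilon_i$ lie in the intervals $I_i$ of Definition~\ref{defn:epsilon-concentrated}, the $\epsilon$-concentration hypothesis ($\epsilon<\delta_\cU$) gives $\X\subset D_{L_1+2\varepsilon_1}$, $D_{L_1}\subset\X_{\delta_1}\subset D_{L_2}$ and $D_{L_2-2\varepsilon_2}\subset\X_\epsilon$ with $0<\delta_1\le\epsilon<\delta_\cU$, while Proposition~\ref{BMT:thm} (applied once at $L_1$ and once at $L_2$) gives w.h.p.\ $D^V_{L_1+2\varepsilon_1}\subset\hat D^V_{L_1+\varepsilon_1}\subset D^V_{L_1}$ and $D^V_{L_2}\subset\hat D^V_{L_2-\varepsilon_2}\subset D^V_{L_2-2\varepsilon_2}$. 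Intersecting the first group of inclusions with $f^{-1}(V)$ and concatenating, we obtain the chain (valid w.h.p.)
\[
\X^V\subset D^V_{L_1+2\varepsilon_1}\subset\hat D^V_{L_1+\varepsilon_1}\subset D^V_{L_1}\subset\X^V_{\delta_1}\subset D^V_{L_2}\subset\hat D^V_{L_2-\varepsilon_2}\subset D^V_{L_2-2\varepsilon_2}\subset\X^V_{\epsilon}.
\]
Since $\cU$, hence $\cU'$, is finite and the constant of Proposition~\ref{BMT:thm} is independent of $L$ and $V$, we may take this chain to hold simultaneously for all $V\in\cU'$ on a single high-probability event.

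Next I would apply $H_0$ to this chain and feed the resulting commutative diagram into Lemma~\ref{lemma:interleave} with $A=H_0(\hat D^V_{L_1+\varepsilon_1})$, $B=H_0(\hat D^V_{L_2-\varepsilon_2})$, $C=H_0(\X^V)$, $D=H_0(\X^V_{\delta_1})$ and $E=H_0(\X^V_{\epsilon})$. All of the maps $C\to A\to B$, $A\to D\to B$, $C\to D\to E$ and $B\to E$ are induced by the displayed inclusions, and $C\to D$ and $D\to E$ are isomorphisms because $\delta_1,\epsilon<\delta_\cU=\min_{W\in\cU'}\delta_W\le\delta_V$, so that $\X^V\hookrightarrow\X^V_{\delta_1}\hookrightarrow\X^V_{\epsilon}$ induce $H_0$-isomorphisms. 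Lemma~\ref{lemma:interleave} then gives $\hat\sD(V)=\im(A\to B)=\im(D\to B)\cong D$, and since the diagram commutes the composite $H_0(\X^V)=C\to A\to\hat\sD(V)$ (the map $C\to A$ being induced by $\X^V\subset\hat D^V_{L_1+\varepsilon_1}$) realizes this isomorphism; this is the first assertion of the proposition. For $V\subset W\in\cU'$, the inclusions $f^{-1}(V)\subset f^{-1}(W)$ produce a parallel chain over $W$ together with vertical maps between the two chains, and all squares commute because every arrow is induced by an inclusion of spaces; stacking the two instances of Lemma~\ref{lemma:interleave} shows that the isomorphisms $H_0(\X^V)\cong\hat\sD(V)$ and $H_0(\X^W)\cong\hat\sD(W)$ intertwine $H_0(\X^V)\to H_0(\X^W)$ with the map $\hat\sD(V)\to\hat\sD(W)$ induced by $H_0(\hat D^V_{L_2-\varepsilon_2})\to H_0(\hat D^W_{L_2-\varepsilon_2})$, which carries $\hat\sD(V)$ into $\hat\sD(W)$ by the same commutativity argument used in Proposition~\ref{prop:box}.

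The step I expect to be the main obstacle is not conceptual but organizational: one must exhibit a single choice of $\varepsilon_1,\varepsilon_2$ --- and then of the kernel radius $r=r(n)$ with $nr^d\ge C\log n$ --- simultaneously meeting the $H_0$-regularity requirements over every $V\in\cU'$, the membership $L_i\pm2\varepsilon_i\in I_i$, the separation $L_1-2\varepsilon_1>L_2+2\varepsilon_2$, and the hypothesis of Proposition~\ref{BMT:thm}, and then verify that the finitely many high-probability events (one per $V\in\cU'$) may be intersected. Once the sandwich chain above is in place, the rest is a direct transcription of the proof of Proposition~\ref{prop:box}, with the single-level comparisons of Corollary~\ref{cor:isom} and Proposition~\ref{prop:level1-box} subsumed into one application of Lemma~\ref{lemma:interleave} to the enlarged chain.
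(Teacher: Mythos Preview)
Your argument is correct, and it is a genuine simplification of the paper's route. The paper proves the proposition in two layers: it first invokes Corollary~\ref{cor:isom} (itself an application of Lemma~\ref{lemma:interleave} to the five-term diagram of Proposition~\ref{BMT:thm}) to identify $H_0(D_{L_i}^V)\cong\hat\sD_i(V)$, then builds a map $\hat\sD_1(V)\to\hat\sD_2(V)$ and combines this with Proposition~\ref{prop:box} (another application of Lemma~\ref{lemma:interleave}) to get $H_0(\X^V)\cong\sD(V)\cong\im(\hat\sD_1(V)\to\hat\sD_2(V))=\hat\sD(V)$; the naturality in $V\subset W$ is then read off a very large commutative diagram of inclusions. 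You instead concatenate all of the relevant inclusions into a single nine-term chain and apply Lemma~\ref{lemma:interleave} once, with $C,D,E$ the three $\X$-type terms and $A,B$ the two $\hat D$-terms; this bypasses the intermediate objects $\sD(V)$ and $\hat\sD_i(V)$ entirely. The trade-off is that your chain requires the extra inclusions $\X\subset D_{L_1+2\varepsilon_1}$ and $D_{L_2-2\varepsilon_2}\subset\X_\epsilon$, which the paper's layered argument does not need; you supply these by shrinking $\varepsilon_i$ so that $L_i\pm2\varepsilon_i\in I_i$. This is harmless (the $I_i$ are open and the $\varepsilon_i$ in Section~\ref{ssec:PointCloudMapper} are chosen freely subject to finitely many open constraints), but it is worth flagging that you are tacitly imposing one more condition on the $\varepsilon_i$ than the paper does. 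Your handling of the finitely many high-probability events and of the commuting square over $V\subset W$ via functoriality of inclusions is the same as the paper's.
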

\begin{proof}
We will use the assumption that $L_1-2\varepsilon_1> L_2+2\varepsilon_2$ repeatedly for each of the super-level set inclusions in the proof. The inclusion of spaces $\hat{D}^V_{L_1-\varepsilon_1}\subset \hat{D}^V_{L_2-\varepsilon_2}$ induces a homomorphism $H_0(\hat{D}^V_{L_1-\varepsilon_1})\rightarrow H_0( \hat{D}^V_{L_2-\varepsilon_2})$. Restricting the domain of this map, we get a homomorphism $\hat{\sD_1}(V)\rightarrow  H_0( \hat{D}^V_{L_2-\varepsilon_2})$.  Since $L_1-\varepsilon_1>L_2+\varepsilon_2>L_2-\varepsilon_2$, the map $\hat{\sD_1}(V)\rightarrow  H_0( \hat{D}^V_{L_2-\varepsilon_2})$ factors through $H_0(\hat{D}_{L_2+\varepsilon_2}^V)\rightarrow H_0(\hat{D}_{L_2-\varepsilon_2}^V)$, forming the commutative diagram
 \begin{center}
\begin{tikzpicture}[scale=1]
\node at (-.25,.25) {$\circlearrowright$};
\node (A) at (-1,1) {$\hat{\sD_1}(V)$};
\node (B) at (2,1) {$ H_0( \hat{D}^V_{L_2-\varepsilon_2})$};
\path[<-] (B) edge node[above] {} (A);
\node (C) at (-1,-1) {$H_0(\hat{D}_{L_2+\varepsilon_2}^V)$};
\path[<-] (B) edge node[above] {} (C);
\path[->] (A) edge (C);
\end{tikzpicture}
\end{center}
This implies that $\im(\hat{\sD_1}(V)\rightarrow H_0(\hat{D}^V_{L_2-\varepsilon_2}))\subset \hat{\sD_2}(V) $, and gives a map $\hat{\sD_1}(V)\rightarrow \hat{\sD_2}(V)$, which w.h.p. completes the following commutative diagram,
    \begin{center}
\begin{tikzpicture}[scale=1]
\node at (0,0) {$\circlearrowright$};
\node (A) at (-1,1) {$H_0(D_{L_1}^V)$};
\node (B) at (1,1) {$\hat{\sD}_1(V) $};
\path[<-] (B) edge node[above] {$\cong$} (A);
\node (C) at (-1,-1) {$H_0(D_{L_2}^V)$};
\node (D) at (1,-1) {$\hat{\sD}_2(V) $};
\path[<-] (D) edge node[above] {$\cong$} (C);
\path[->] (A) edge (C);
\path[->] (B) edge (D);
\end{tikzpicture}
\end{center}
where the horizontal maps are given by Corollary \ref{cor:isom}. Therefore, applying Proposition \ref{prop:box}, we have
\[
H_0(\X^V)\xrightarrow{\cong} \sD(V)\overset{\text{w.h.p.}}{\cong} \im\left(\hat{\sD_1}(V)\rightarrow \hat{\sD_2}(V)\right). 
\]
Since $\hat{D}_{L_1+\varepsilon_1}^V\subset\hat{D}_{L_1-\varepsilon_1}^V\subset\hat{D}_{L_2+\varepsilon_2}^V\subset  \hat{D}_{L_2-\varepsilon_2}^V$, we have that $\im\left(\hat{\sD_1}(V)\rightarrow \hat{\sD_2}(V)\right)= \hat{\sD}(V)$. The map $\hat{\sD}(V)\rightarrow \hat{\sD}(W)$ in the statement of the proposition (and the commutativity of the resulting diagram) is induced by the inclusion $\hat{D}_{L_2-\varepsilon_2}^V\hookrightarrow \hat{D}_{L_2-\varepsilon_2}^W$ in the following commutative diagram. \qed

\begin{center}
\begin{tikzpicture}[scale=0.66, rotate=90]
\node (A) at (-2,1) {$\hat{D}^V_{L_1+\varepsilon_1}$};
\node (B) at (2,1) {$\hat{D}^V_{L_1-\varepsilon_1}$};
\node (C) at (0,4) {$D^V_{L_1}$};
\node (D) at (-2,-2) {$\hat{D}^W_{L_1+\varepsilon_1}$};
\node (E) at (2,-2) {$\hat{D}^W_{L_1-\varepsilon_1}$};
\node (F) at (0,-5) {$D_{L_1}^W$};
\node (G) at (-4.25,-3) {$D^W_{L_1+2\varepsilon_1}$};
\node (H) at (-4.25,2) {$D^V_{L_1+2\varepsilon_1}$};
\node (I) at (4.25,-3) {$D^W_{L_1-2\varepsilon_1}$};
\node (J) at (4.25,2) {$D^V_{L_1-2\varepsilon_1}$};

\path[right hook->] (A) edge (B);
\path[right hook->] (A) edge (D);
\path[right hook->] (A) edge (C);
\path[right hook->] (B) edge (E);
\path[right hook->] (C) edge (B);
\path[right hook->] (D) edge (E);
\path[right hook->] (D) edge (F);
\path[right hook->] (C) edge (F);
\path[right hook->] (F) edge (E);
\path[right hook->] (H) edge (A);
\path[right hook->] (B) edge (J);
\path[right hook->] (G) edge (D);
\path[right hook->] (E) edge (I);
\path[right hook->] (J) edge (I);
\path[right hook->] (H) edge (G);
\path[right hook->] (H) edge (C);
\path[right hook->] (C) edge (J);
\path[right hook->] (G) edge (F);
\path[right hook->] (F) edge (I);

\node (A2) at (9,1) {$\hat{D}^V_{L_2+\varepsilon_2}$};
\node (B2) at (13,1) {$\hat{D}^V_{L_2-\varepsilon_2}$};
\node (C2) at (11,4) {$D^V_{L_2}$};
\node (D2) at (9,-2) {$\hat{D}^W_{L_2+\varepsilon_2}$};
\node (E2) at (13,-2) {$\hat{D}^W_{L_2-\varepsilon_2}$};
\node (F2) at (11,-5) {$D_{L_2}^W$};
\node (G2) at (6.75,-3) {$D^W_{L_2+2\varepsilon_2}$};
\node (H2) at (6.75,2) {$D^V_{L_2+2\varepsilon_2}$};
\node (I2) at (15.25,-3) {$D^W_{L_2-2\varepsilon_2}$};
\node (J2) at (15.25,2) {$D^V_{L_2-2\varepsilon_2}$};

\path[right hook->] (A2) edge (B2);
\path[right hook->] (A2) edge (D2);
\path[right hook->] (A2) edge (C2);
\path[right hook->] (B2) edge (E2);
\path[right hook->] (C2) edge (B2);
\path[right hook->] (D2) edge (E2);
\path[right hook->] (D2) edge (F2);
\path[right hook->] (C2) edge (F2);
\path[right hook->] (F2) edge (E2);
\path[right hook->] (H2) edge (A2);
\path[right hook->] (B2) edge (J2);
\path[right hook->] (G2) edge (D2);
\path[right hook->] (E2) edge (I2);
\path[right hook->] (J2) edge (I2);
\path[right hook->] (H2) edge (G2);
\path[right hook->] (H2) edge (C2);
\path[right hook->] (C2) edge (J2);
\path[right hook->] (G2) edge (F2);
\path[right hook->] (F2) edge (I2);

\node (A3) at (-5.5,5) {$\X^V$};
\node (B3) at (-5.5,-6) {$\X^W$};
\node (C3) at (5.5,7) {$\X^V_{\delta_1}$};
\node (D3) at (5.5,-8) {$\X^W_{\delta_1}$};
\node (E3) at (16.5,5) {$\X^V_{\delta_2}$};
\node (F3) at (16.5,-6) {$\X^W_{\delta_2}$};

\path[right hook->] (A3) edge (C);
\path[right hook->] (A3) edge (B3);
\path[right hook->] (B3) edge (F);
\path[right hook->] (F) edge (D3);
\path[right hook->] (C) edge (C3);
\path[right hook->] (C) edge (C2);
\path[right hook->] (F) edge (F2);
\path[right hook->] (C3) edge (C2);
\path[right hook->] (D3) edge (F2);
\path[right hook->] (C2) edge (E3);
\path[right hook->] (F2) edge (F3);
\path[right hook->] (E3) edge (F3);
\path[right hook->] (A3) edge (C3);
\path[right hook->] (C3) edge (E3);
\path[right hook->] (B3) edge (D3);
\path[right hook->] (D3) edge (F3);
\path[right hook->] (J) edge (H2);
\path[right hook->] (I) edge (G2);
\end{tikzpicture}
\end{center}

\end{proof}

\section{Main results}
\label{sec:MainResults}
In this section, we prove convergence of the random enhanced mapper graph to the Reeb graph, as well as stability of the enhanced mapper graph under certain perturbations of the corresponding real valued function. Using the model described in Section \ref{sec:Model}, we generate random data, which is used to define a cosheaf which estimates the connected components of fibers of the real valued function associated to a given constructible $\R$-space. In the proof of Theorem \ref{thm:main}, we show that, with high probability, the cosheaf constructed using random data is isomorphic to the mapper functor applied to the Reeb cosheaf defined in Section \ref{sec:Background}. We then use the results established in Section \ref{sec:Background} to translate the cosheaf theoretic statement into a geometric statement (Corollary \ref{cor:main}) for the corresponding $\R$-graphs. 

To begin, we identify a sufficient condition for determining when a morphism of constructible cosheaves is an isomorphism. 
A morphism $\sF\rightarrow\sG$ of cosheaves is a family of maps $\sF(V)\rightarrow \sG(V)$, for each open set $V$, which form a commutative diagram
\begin{center}
\begin{tikzpicture}[scale=1]
\node at (0,0) {$\circlearrowright$};
\node (A) at (-1.5,1) {$\sF(V)$};
\node (B) at (1.5,1) {$ \sG(V)$};
\draw[<-] (B) edge node[above] {} (A);
\node (C) at (-1.5,-1) {$\sF(W)$};
\node (D) at (1.5,-1) {$\sG(W)$};
\path[<-] (D) edge node[above] {} (C);
\path[->] (A) edge (C);
\path[->] (B) edge (D);
\end{tikzpicture}
\end{center}
for each pair of open sets $V\subset W$. The morphism $\sF\rightarrow \sG$ is an isomorphism if each of the maps $\sF(V)\rightarrow \sG(V)$ is an isomorphism. Our first result shows that for cosheaves of the form $\cM_\cU(\sF)$, it is sufficient to consider only the maps $\cM_\cU(\sF)(V)\rightarrow \cM_\cU(\sG)(V)$ for open sets $V\in\cU'$.
\begin{proposition}\label{prop:cosheaf-iso}
Let $\sC$ and $\sD$ be cosheaves on $\R$. An isomorphism $\cM_\cU(\sC)\rightarrow\cM_\cU(\sD)$ of cosheaves is uniquely determined by a family of isomorphisms $\cM_\cU(\sC)(V)\rightarrow \cM_\cU(\sD)(V)$ for each $V\in\cU'$, which form a commutative diagram
\begin{center}
\begin{tikzpicture}[scale=1]
\node at (0,0) {$\circlearrowright$};
\node (A) at (-2,1) {$\cM_\cU(\sC)(V)$};
\node (B) at (2,1) {$ \cM_\cU(\sD)(V)$};
\draw[<-] (B) edge node[above] {$\cong$} (A);
\node (C) at (-2,-1) {$\cM_\cU(\sC)(W)$};
\node (D) at (2,-1) {$\cM_\cU(\sD)(W)$};
\path[<-] (D) edge node[above] {$\cong$} (C);
\path[->] (A) edge (C);
\path[->] (B) edge (D);
\end{tikzpicture}
\end{center}
for each pair $V\subset W\in\cU'$.
\end{proposition}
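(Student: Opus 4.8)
The plan is to exploit the fact that the mapper cosheaf $\cM_\cU(\sC)$ is, by definition, determined by the values $\sC(\cI_\cU(U))$ on intervals of the form $\cI_\cU(U)$, and that every such interval is built out of the sets $W_x = \bigcap_{V \in \cU_x} V$, each of which lies in $\cU'$. First I would observe that for any open interval $U \in \Int$, the lemma of Section \ref{sec:CategorifiedMapper} gives $\cI_\cU(U) = \bigcup_{x \in U} W_x$, and that the collection $\cW_U := \{\bigcap_{a \in A} W_a : A \subseteq U\}$ is an open cover of $\cI_\cU(U)$ closed under finite intersections, consisting entirely of elements of $\cU'$. By the cosheaf property applied to $\sC$ (and to $\sD$),
\[
\cM_\cU(\sC)(U) = \sC(\cI_\cU(U)) = \varinjlim_{W \in \cW_U} \sC(W),
\]
and likewise for $\sD$. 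Thus $\cM_\cU(\sC)(U)$ is a colimit over a diagram indexed by a sub-poset of $\cU'$ (ordered by inclusion), and the value of $\cM_\cU(\sC)$ on $U$, together with the restriction maps into $U' \supseteq U$, is completely recovered from the restriction of $\cM_\cU(\sC)$ to $\cU'$ (note $\cM_\cU(\sC)(W) = \sC(\cI_\cU(W)) = \sC(W)$ for $W \in \cU'$ since $\cI_\cU$ fixes such sets up to the isomorphism guaranteed by constructibility — more precisely $\cI_\cU(W)$ and $W$ contain the same boundary points of $\cU$, so $\sC[W \subseteq \cI_\cU(W)]$ is an isomorphism by the constructibility argument in Proposition \ref{prop:mapper-functor}).

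**Carrying it out.** Given the family $\{\theta_V : \cM_\cU(\sC)(V) \to \cM_\cU(\sD)(V)\}_{V \in \cU'}$ of isomorphisms forming the stated commutative squares, I would define, for an arbitrary $U \in \Int$, the map $\theta_U : \cM_\cU(\sC)(U) \to \cM_\cU(\sD)(U)$ as the map induced on colimits: since both sides are $\varinjlim_{W \in \cW_U}$ of the respective diagrams, and the $\theta_W$ for $W \in \cW_U \subseteq \cU'$ commute with all the restriction maps $\sC[W' \subseteq W'']$ (by the hypothesis, since these are inclusions within $\cU'$), the universal property of the colimit produces a unique map $\theta_U$ compatible with the cocone legs. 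Functoriality of colimits, together with the fact that $\theta^{-1}$ also assembles into such a family, shows $\theta_U$ is an isomorphism with inverse the colimit of the $\theta_W^{-1}$. Naturality of $\{\theta_U\}_{U \in \Int}$ with respect to an arbitrary inclusion $U \subseteq U'$ follows because $\cW_U$ refines $\cW_{U'}$ (indeed $\cI_\cU(U) \subseteq \cI_\cU(U')$), so the relevant square of colimits commutes by the universal property; on the generating sets $W \in \cW_U$ it reduces to the hypothesized commutative squares. Finally, uniqueness: any morphism $\cM_\cU(\sC) \to \cM_\cU(\sD)$ restricting to the given $\theta_V$ on $\cU'$ must, by naturality against the cocone legs $\sC(W) \to \cM_\cU(\sC)(U)$, agree with the colimit-induced map on every $U$, hence equals $\{\theta_U\}$.

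**Main obstacle.** The routine parts are the colimit bookkeeping; the one genuinely delicate point is verifying that the given commutativity hypothesis on squares indexed by pairs $V \subseteq W$ in $\cU'$ is enough to guarantee that the $\theta_W$, $W \in \cW_U$, form a morphism of diagrams over the (possibly complicated) index poset $\cW_U$ — i.e.\ that every arrow in $\cW_U$ is an inclusion between elements of $\cU'$ and is thus covered by the hypothesis. This is where the identification $\cI_\cU(U) = \bigcup_{x} W_x$ and the closure of $\cU'$ under intersection do the work: each $W \in \cW_U$ is a finite intersection of sets $W_x$, each $W_x \in \cU'$, and an intersection of elements of $\cU'$ is again in $\cU'$, so every object of $\cW_U$ lies in $\cU'$ and every morphism of $\cW_U$ is an inclusion of the required form. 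Once this is in place, the colimit argument goes through mechanically, and the proof is essentially the statement that a cosheaf is determined by its restriction to a basis — here the basis being (the inclusion-closure of) the cover-generated intervals.
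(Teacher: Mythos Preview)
Your argument is correct and more self-contained than the paper's. The paper's proof is two lines: it observes via Proposition~\ref{prop:mapper-functor} that $\cM_\cU(\sC)$ and $\cM_\cU(\sD)$ are constructible cosheaves, and then invokes \cite[Proposition~3.10]{deSilvaMunchPatel2016}, which characterizes morphisms of constructible cosheaves in terms of their values on a suitable generating family. You instead unwind this directly from the cosheaf colimit axiom, using that $\cI_\cU(U)$ is covered by the intersection-closed family $\cW_U \subseteq \cU'$, so that $\cM_\cU(\sC)(U)=\varinjlim_{W\in\cW_U}\sC(W)$ and the morphism is forced by universality. One small over-caution: for $W\in\cU'$ you have $\cI_\cU(W)=W$ on the nose (as the paper itself notes in the proof of Theorem~\ref{thm:main}), not merely up to a constructibility isomorphism, so $\cM_\cU(\sC)(W)=\sC(W)$ exactly and the identification of the given data with maps $\sC(W)\to\sD(W)$ is literal. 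The trade-off is that the paper's route is shorter and leverages an existing structural result about constructible cosheaves, while yours is elementary, avoids the external reference, and makes transparent exactly why $\cU'$ is the right indexing poset---namely because it is an intersection-closed basis for all the intervals $\cI_\cU(U)$.
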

\begin{proof}
Proposition \ref{prop:mapper-functor} shows that $\cM_\cU(\sC)$ and $\cM_\cU(\sD)$ are constructible cosheaves over $\R$. The proof then follows from \cite[Proposition 3.10]{deSilvaMunchPatel2016}.
\qed
\end{proof}
Recalling the notation of Section \ref{sec:Background} and Section \ref{sec:Model}, for a super-level set $D_L$ of $p$, let $\sR_{D_{L}}$ be the Reeb cosheaf of $(D_L,f)$ on $\R$, defined by
$$
\sR_{D_L}(U)=\pi_0(D_L^U)
$$
for each open set $U\subset \R$. Let $\sR_{\hat{D}_L}$ be the Reeb cosheaf of $(\hat{D}_L,f)$ on $\R$, defined by
$$
\sR_{\hat{D}_L}(U)= \pi_0(\hat{D}_L^U)
$$
where $\hat D_L, \hat D_L^U$ are defined in \eqref{eqn:def_DL}, \eqref{eqn:def_DLV}, respectively, and $U\subset \R$ is an open set. We should note that $(D_L,f)$ and $(\hat{D}_L,f)$ are not apriori constructible spaces, so the cosheaves $\sR_{D_L}$ and $\sR_{\hat{D}_L}$ are not necessarily constructible. However, in what follows we will work exclusively with $\cM_\cU(\sR_{D_L})$ and $\cM_\cU(\sR_{\hat{D}_L})$, which are constructible cosheaves by Proposition \ref{prop:mapper-functor}. 

   Let $\hat{\sD}^\pi_n$ be the cosheaf defined by
   \[
   \hat{\sD}_n^\pi:=\cM_\cU\left(\im\left(\sR_{\hat{D}_{L_1+\varepsilon_1}}\rightarrow \sR_{\hat{D}_{L_2-\varepsilon_2}}\right)\right),
   \]
   with constants $n$, $L_1$, $L_2$, $\varepsilon_1$, and $\varepsilon_2$ chosen in Section \ref{sec:Model}. More explicitly, $\hat{\sD}^\pi_n$ maps an open interval $U$ to elements of the set $\sR_{\hat{D}_{L_2-\varepsilon_2}}(\cI_\cU(U))$ which lie in the image of the set $\sR_{\hat{D}_{L_1+\varepsilon_1}}(\cI_\cU(U))$ under the map induced by the inclusion $\hat{D}_{L_1+\varepsilon_1}\subseteq\hat{D}_{L_2-\varepsilon_2}$. By Proposition \ref{prop:mapper-functor}, $\hat{\sD}^\pi_n$ is a constructible cosheaf. 
\begin{theorem}\label{thm:main}
Assume there exists $\epsilon<\delta_\cU$ such that $p$ is $\epsilon$-concentrated on $\X$, then
    \[  \limninf\prob{d_I(\hat{\sD}^\pi_n,\sR_\X)\le\text{res}_f\cU} =1.\]
\end{theorem}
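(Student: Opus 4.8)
The plan is to assemble the statement from three ingredients already in hand: the deterministic interleaving bound of Theorem~\ref{thm:interleave}, the high-probability recovery of $H_0$ of fibers from Proposition~\ref{prop:level2-box}, and the fact that the interleaving distance is a pseudometric (so isomorphic cosheaves are at distance $0$). First I would record the deterministic half: apply Theorem~\ref{thm:interleave} with $\sF=\sR_\X$. Since $\sR_\X(V)\neq\emptyset$ precisely when $V\cap f(\X)\neq\emptyset$, we have $\res_{\sR_\X}\cU=\res_f\cU$ in the sense of Definition~\ref{def:resolution}, so $d_I(\sR_\X,\cM_\cU(\sR_\X))\le\res_f\cU$ with no randomness involved.

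Next I would establish that, with high probability, $\hat{\sD}^\pi_n\cong\cM_\cU(\sR_\X)$ as constructible cosheaves. The key preliminary observation is that $\cI_\cU(V)=V$ for every $V\in\cU'$: if $x\in V$ then $\bigcap_{W\in\cU_x}W\subseteq V$, because $V$ is an intersection of members of $\cU_x$, while $y\in\bigcap_{W\in\cU_y}W$ for every $y\in V$; combined with the identification $\cI_\cU(V)=\bigcup_{x\in V}\bigcap_{W\in\cU_x}W$ this gives equality. Hence for $V\in\cU'$ one has $\cM_\cU(\sR_\X)(V)=\sR_\X(V)=\pi_0(\X^V)$ and $\hat{\sD}^\pi_n(V)=\im\bigl(\pi_0(\hat{D}^V_{L_1+\varepsilon_1})\to\pi_0(\hat{D}^V_{L_2-\varepsilon_2})\bigr)$, and under the identification of $\pi_0$ with $H_0$ (the free abelian group on path components, which also commutes with taking images of inclusion-induced maps) these are exactly the groups $H_0(\X^V)$ and $\hat{\sD}(V)$ appearing in Proposition~\ref{prop:level2-box}. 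Since $\cU$ is finite (as required in Proposition~\ref{prop:mapper-functor}), so is $\cU'$, and I would intersect the finitely many high-probability events supplied by Proposition~\ref{prop:level2-box}, one for each pair $V\subset W\in\cU'$, under the standing assumption $nr^d\ge\max(C_{\varepsilon_1},C_{\varepsilon_2})\log n$. On this intersection we obtain a family of isomorphisms $\cM_\cU(\sR_\X)(V)\xrightarrow{\cong}\hat{\sD}^\pi_n(V)$ for $V\in\cU'$ commuting with all restriction maps; since both cosheaves are constructible (Proposition~\ref{prop:mapper-functor}), Proposition~\ref{prop:cosheaf-iso} upgrades this family to an isomorphism of cosheaves $\cM_\cU(\sR_\X)\xrightarrow{\cong}\hat{\sD}^\pi_n$.

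Finally, on the event that this isomorphism exists, $d_I(\hat{\sD}^\pi_n,\cM_\cU(\sR_\X))=0$, so the triangle inequality for the interleaving pseudometric together with the bound of the first step gives $d_I(\hat{\sD}^\pi_n,\sR_\X)\le d_I(\hat{\sD}^\pi_n,\cM_\cU(\sR_\X))+d_I(\cM_\cU(\sR_\X),\sR_\X)\le\res_f\cU$. Therefore $\prob{d_I(\hat{\sD}^\pi_n,\sR_\X)\le\res_f\cU}$ is bounded below by the probability of that event, which tends to $1$ as $n\to\infty$, proving the theorem. The substantive content — the probabilistic recovery of the $H_0$ of fibers from kernel-density super-level-set estimates — is entirely packaged inside Proposition~\ref{prop:level2-box} (ultimately relying on \citep{BobrowskiMukherjeeTaylor2017}), so I expect the only genuine work here to be bookkeeping: checking $\cI_\cU(V)=V$ on $\cU'$, matching the $\pi_0$/$H_0$ descriptions so that the output of Proposition~\ref{prop:level2-box} is \emph{literally} the data demanded by Proposition~\ref{prop:cosheaf-iso}, and confirming that the relevant event is an intersection of \emph{finitely many} w.h.p.\ events (which is where finiteness of $\cU$, equivalently compactness of $\X$, enters).
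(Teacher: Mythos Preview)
Your proposal is correct and follows essentially the same route as the paper: identify $\cI_\cU(V)=V$ for $V\in\cU'$, use Proposition~\ref{prop:level2-box} (translated from $H_0$ to $\pi_0$) to obtain the commuting isomorphisms on $\cU'$, invoke Proposition~\ref{prop:cosheaf-iso} to conclude $\hat{\sD}^\pi_n\cong\cM_\cU(\sR_\X)$ w.h.p., and finish with Theorem~\ref{thm:interleave} and the triangle inequality. If anything, you are more explicit than the paper about the bookkeeping---the verification of $\cI_\cU(V)=V$, the $\pi_0/H_0$ matching, and the finite intersection of w.h.p.\ events---but the argument is the same.
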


 \begin{proof}
\myblue{An inclusion of open sets $Y\subset Z$ induces a map 
\[
\pi_0(Y)\rightarrow \pi_0(Z),
\]
of the corresponding sets of path-connected components of $Y$ and $Z$ respectively. Each set of path-connected components forms a basis for the homology group in degree 0. Therefore, the map from $\pi_0(Y)$ to $\pi_0(Z)$ extends to a map between homology groups, resulting in the following commutative diagram
\begin{center}
\begin{tikzpicture}[scale=1]
\node at (0,0) {$\circlearrowright$};
\node (A) at (-1,1) {$\pi_0(Y)$};
\node (B) at (1,1) {$H_0(Y) $};
\path[<-] (B) edge node {} (A);
\node (C) at (-1,-1) {$\pi_0(Z)$};
\node (D) at (1,-1) {$H_0(Z).$};
\path[<-] (D) edge node {} (C);
\path[->] (A) edge (C);
\path[->] (B) edge (D);
\end{tikzpicture}
\end{center}
By combining the preceding commutative diagram with Proposition \ref{prop:level2-box}, we see that for every $V\subset W\in \cU'$, the following diagram commutes w.h.p. }
    \begin{center}
\begin{tikzpicture}[scale=1]
\node at (0,0) {$\circlearrowright$};
\node (A) at (-1,1) {$\pi_0(\X^{V})$};
\node (B) at (1,1) {$\hat{\sD}^\pi_n(V) $};
\path[<-] (B) edge node[above] {$\cong$} (A);
\node (C) at (-1,-1) {$\pi_0(\X^W)$};
\node (D) at (1,-1) {$\hat{\sD}^\pi_n(W).$};
\path[<-] (D) edge node[above] {$\cong$} (C);
\path[->] (A) edge (C);
\path[->] (B) edge (D);
\end{tikzpicture}
\end{center}
Notice that if $V\in\cU'$, then $\cI_\cU(V)=V$. By Proposition \ref{prop:cosheaf-iso} we have that $$\hat{\sD}^\pi_n\overset{\text{w.h.p.}}{\cong}\cM_\cU(\sR_\X).$$ Therefore, w.h.p.
\[
d_I(\hat{\sD}^\pi_n,\cM_\cU(\sR_\X))=0.
\]
Theorem \ref{thm:interleave}, combined with the triangle inequality, implies the theorem.
\qed
 \end{proof}

 \begin{corollary}\label{cor:main}
 Let $\cR(\X,f)$ be the Reeb graph of a constructible $\R$-space $(\X,f)$, and $\cD(\hat{\sD}^\pi_n)$ be the display locale of the mapper cosheaf defined above. If there exists $\epsilon<\delta_\cU$ such that $p$ is $\epsilon$-concentrated on $\X$, then
  \[  \limninf\prob{d_R\big(\cD(\hat{\sD}^\pi_n),\cR(\X,f)\big)\le\text{res}_f\cU} =1.\]
 \end{corollary}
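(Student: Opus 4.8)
The plan is to obtain Corollary~\ref{cor:main} as a direct geometric reinterpretation of Theorem~\ref{thm:main}, pushing the cosheaf-theoretic bound through the display locale functor $\cD$. The key fact I would use is the proposition of~\citep{deSilvaMunchPatel2016} recalled in Section~\ref{ssec-interleavings}: for constructible cosheaves $\sF$ and $\sG$, there is an $\e$-interleaving between $\sF$ and $\sG$ if and only if there is an $\e$-interleaving between $\cD(\sF)$ and $\cD(\sG)$ as $\R$-graphs. Taking the infimum over admissible $\e$, this says that $\cD$ is an isometry, $d_R(\cD(\sF),\cD(\sG)) = d_I(\sF,\sG)$, on the category of constructible cosheaves.

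Concretely, I would proceed as follows. First, note that both cosheaves appearing in Theorem~\ref{thm:main} are constructible: $\hat{\sD}^\pi_n$ is constructible by Proposition~\ref{prop:mapper-functor}, being of the form $\cM_\cU(-)$, and $\sR_\X$, the Reeb cosheaf of the constructible $\R$-space $(\X,f\vert_\X)$, lies in $\Cshc$ by the definition of the Reeb cosheaf functor $\cC$. Second, apply the isometry above to the (random) pair $(\hat{\sD}^\pi_n,\sR_\X)$, which gives, for every realization of the sample, the equality $d_R(\cD(\hat{\sD}^\pi_n),\cD(\sR_\X)) = d_I(\hat{\sD}^\pi_n,\sR_\X)$. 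Third, recall from~\citep{deSilvaMunchPatel2016} that $\cD(\sR_\X)$ is naturally isomorphic, as an $\R$-graph, to the Reeb graph $\cR(\X,f)$; since isomorphic $\R$-graphs are $0$-interleaved, the triangle inequality for $d_R$ yields $d_R(\cD(\hat{\sD}^\pi_n),\cR(\X,f)) = d_R(\cD(\hat{\sD}^\pi_n),\cD(\sR_\X))$. Chaining these equalities, the event $\{\, d_R(\cD(\hat{\sD}^\pi_n),\cR(\X,f)) \le \res_f\cU \,\}$ is literally the same event as $\{\, d_I(\hat{\sD}^\pi_n,\sR_\X) \le \res_f\cU \,\}$, so the two have equal probability for every $n$; letting $n\to\infty$ and invoking Theorem~\ref{thm:main} finishes the proof.

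I do not expect a genuine obstacle in this argument — it is essentially bookkeeping — but the step requiring the most care is checking the hypotheses of the two cited results from~\citep{deSilvaMunchPatel2016}: namely, that every cosheaf in play is really constructible (so that the interleaving correspondence, which is stated only for constructible cosheaves, applies), and that the identification $\cD(\sR_\X) \cong \cR(\X,f)$ is function-preserving, hence an isomorphism of $\R$-graphs and not merely a homeomorphism of underlying topological spaces, so that it contributes nothing to the Reeb interleaving distance.
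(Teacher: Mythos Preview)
Your proposal is correct and matches the paper's intended argument: the paper states Corollary~\ref{cor:main} without an explicit proof, relying implicitly on applying the display locale functor to Theorem~\ref{thm:main} via the proposition of~\citep{deSilvaMunchPatel2016} recalled in Section~\ref{ssec-interleavings}, together with the identification $\cD(\sR_\X)\cong\cR(\X,f)$. Your write-up simply spells out the bookkeeping (constructibility of both cosheaves, the isometry $d_R\circ\cD=d_I$ on $\Cshc$, and the function-preserving nature of the Reeb graph identification) that the paper leaves tacit.
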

 
 If $p$ is concentrated on $\X$, then the above corollary will hold for nice open covers with arbitrarily small resolution, as long as $\delta_\cU$ remains positive. Therefore, Corollary~\ref{cor:main} implies that we can use random point samples from $p$ to construct mapper graphs that are (w.h.p.) arbitrarily close (in the Reeb distance) to the Reeb graph of $\X$. 

To conclude, we will turn our attention to the stability of mapper cosheaves corresponding to a constructible space $(\X,f)$ under perturbations of the function $f$. The following theorem uses the machinery of cosheaf theory to prove that the mapper cosheaf is stable as long as the singular points of the constructible $\R$-space $\X$ are sufficiently ``far away" from the set of boundary points of our open cover $\cU$. 
 \begin{theorem}\label{thm:stability}
     Suppose $\sF$ and $\sG$ are constructible cosheaves over $\R$, with a common set of critical values $S$. Let $\cU$ be a nice open cover of $\R$, with set of boundary points $B$. Assume that
     \[
     d_I(\sF,\sG)<\min\{|s-b|:s\in S,b\in B\}.
     \]
     Then
     \[
     d_I(\cM_\cU(\sF),\cM_\cU(\sG))<d_I(\sF,\sG).
     \]
     Moreover, if $\sF$ is the Reeb cosheaf of $(\X,f)$ and $\sG$ is the Reeb cosheaf of $(\X,g)$, then
     \[
     d_I(\cM_\cU(\sF),\cM_\cU(\sG))<||f-g||_\infty.
     \]
 \end{theorem}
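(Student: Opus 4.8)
The plan is to reduce the first assertion, via Proposition~\ref{prop:cosheaf-iso}, to comparing the groups $\cM_\cU(\sF)(V)=\sF(V)$ and $\cM_\cU(\sG)(V)=\sG(V)$ over $V\in\cU'$, exploiting the single geometric fact that every $W\in\cU'$ (indeed every $W$ in the image of the functor $\cI_\cU$, since $\cI_\cU(W)=\bigcup_{x\in W}\bigcap_{V\in\cU_x}V$ has endpoints in $B\cup\{\pm\infty\}$ and $B$ is locally finite) is an interval whose two endpoints lie at distance $\ge m:=\min\{|s-b|:s\in S,\ b\in B\}$ from the common critical set $S$. Write $\epsilon_0=d_I(\sF,\sG)<m$, fix $\epsilon$ with $\epsilon_0\le\epsilon<m$ admitting an $\epsilon$-interleaving $(\varphi,\psi)$ of $\sF$ and $\sG$, and write $W_{-\epsilon}=(a+\epsilon,b-\epsilon)$ for $W=(a,b)$. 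Because no $s\in S$ lies within $\epsilon<m$ of a point of $B$, one has $S\cap W_{-\epsilon}=S\cap W=S\cap W_\epsilon$ for every such $W$, so by constructibility the restriction maps $\sF[W_{-\epsilon}\subseteq W]$, $\sF[W\subseteq W_\epsilon]$, $\sG[W_{-\epsilon}\subseteq W]$, $\sG[W\subseteq W_\epsilon]$ are all isomorphisms; this is the mechanism that will let us ``correct'' the interleaving maps onto the correct intervals.

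First I would treat the \emph{wide} cover intersections $W$, with $\diam(W)>2\epsilon$: then $W_{-\epsilon}$ is a nonempty interval with $(W_{-\epsilon})_\epsilon=W$ and $(W_{-\epsilon})_{2\epsilon}=W_\epsilon$, so the interleaving identities applied to $W_{-\epsilon}$ read $\psi_W\circ\varphi_{W_{-\epsilon}}=\sF[W_{-\epsilon}\subseteq W_\epsilon]$ and $\varphi_W\circ\psi_{W_{-\epsilon}}=\sG[W_{-\epsilon}\subseteq W_\epsilon]$, both isomorphisms. Combining the first with the naturality square of $\varphi$ for $W_{-\epsilon}\subseteq W$ (and the isomorphisms above) forces $\varphi_W:\sF(W)\to\sG(W_\epsilon)$ to be bijective, so $\Theta_W:=\sG[W\subseteq W_\epsilon]^{-1}\circ\varphi_W$ is an isomorphism $\sF(W)\xrightarrow{\cong}\sG(W)$, and the family $\{\Theta_W\}$ is natural in $W$ by functoriality of $\sF,\sG$ and naturality of $\varphi$. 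For the \emph{narrow} intersections, $\diam(W)\le 2\epsilon$: here $W$ can contain no point of $S$ (if $s\in W=(b_1,b_2)$ with $b_1,b_2\in B$ then $b_2-b_1\ge 2m>2\epsilon$), so $\sF(W),\sG(W)$ are the generic values of $\sF,\sG$ on the component of $\R\setminus S$ meeting $W$, and the comparison is obtained not by an isomorphism but by transporting $\varphi_{\cI_\cU(U)}$ into $\cM_\cU(\sG)$ evaluated on a slightly enlarged input: one forms $\sG[(\cI_\cU(U))_\epsilon\subseteq\cI_\cU(U_\delta)]\circ\sG[\cI_\cU(U)\subseteq(\cI_\cU(U))_\epsilon]^{-1}\circ\varphi_{\cI_\cU(U)}$ together with its mate built from $\psi$, where $\delta$ is chosen so that the coarsening step $U\mapsto\cI_\cU(U_\delta)$ already engulfs $(\cI_\cU(U))_\epsilon$. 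Since enlarging the input by $\delta$ makes $\cI_\cU$ jump to the next boundary points, which sit at distance $\ge m$ beyond the critical values that the $\epsilon$-thickening is trying to reach, this works with some $\delta<\epsilon_0$ (one can arrange $\delta=\max(0,\epsilon_0-m/2)$); letting $\epsilon\downarrow\epsilon_0$ then yields $d_I(\cM_\cU(\sF),\cM_\cU(\sG))\le\delta<\epsilon_0=d_I(\sF,\sG)$. When $\epsilon_0<m/2$ there are no narrow obstructions and one gets $\cM_\cU(\sF)\cong\cM_\cU(\sG)$.

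I expect the main obstacle to be exactly this last point: controlling how the quantization functor $\cI_\cU$ interacts with thickening — the containment $(\cI_\cU(U))_\epsilon\subseteq\cI_\cU(U_\delta)$ and the naturality of the resulting comparison maps — and the bookkeeping of the wide/narrow dichotomy. This is where the hypothesis $d_I(\sF,\sG)<m$ is used rather than mere finiteness: it forces the endpoints of every cover intersection, and of the thickenings appearing in the interleaving identities, to stay clear of $S$, so that the restriction maps used to correct $\varphi,\psi$ are isomorphisms; without the bound the corrections fail and the distance need not drop. Throughout, $\cM_\cU(\sF)$ and $\cM_\cU(\sG)$ are constructible with critical values in $B$ by Proposition~\ref{prop:mapper-functor}, so the interleaving we build is between constructible cosheaves, and $\res$-type estimates play no role beyond what is already in Theorem~\ref{thm:interleave}.

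For the moreover, suppose $\sF=\sR_f$ and $\sG=\sR_g$. From $\|f-g\|_\infty=:\delta$ one has $f^{-1}(U)\subseteq g^{-1}(U_\delta)$ and $g^{-1}(U)\subseteq f^{-1}(U_\delta)$ for all $U$, so the induced maps $\varphi_U:\pi_0(f^{-1}(U))\to\pi_0(g^{-1}(U_\delta))$ and $\psi_U:\pi_0(g^{-1}(U))\to\pi_0(f^{-1}(U_\delta))$ form a $\delta$-interleaving of $\sR_f$ and $\sR_g$ (the composites being the inclusion-induced maps $\pi_0(f^{-1}(U))\to\pi_0(f^{-1}(U_{2\delta}))$ and similarly for $g$); hence $d_I(\sR_f,\sR_g)\le\|f-g\|_\infty$. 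Feeding this into the first part gives $d_I(\cM_\cU(\sR_f),\cM_\cU(\sR_g))<d_I(\sR_f,\sR_g)\le\|f-g\|_\infty$, as claimed.
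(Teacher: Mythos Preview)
Your proposal has the correct intuition---the hypothesis guarantees that every interval in the image of $\cI_\cU$ has its endpoints in $B$, hence at distance $\ge m$ from $S$, so $\epsilon$-thickening such an interval never picks up a new critical value---but the execution goes astray in two places.

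First, the detour through Proposition~\ref{prop:cosheaf-iso} and the wide/narrow dichotomy does not lead cleanly to the result. That proposition produces an \emph{isomorphism} of cosheaves, not an interleaving; you only get the isomorphism when every $W\in\cU'$ is wide, and that is a condition on the cover (the minimal overlap width) rather than on $\epsilon_0$ versus $m$, so the claim ``when $\epsilon_0<m/2$ there are no narrow obstructions'' is not justified. In the narrow case you abandon this framework anyway and try to build an interleaving directly, so the dichotomy buys nothing.

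Second, and more seriously, the interleaving map you write down does not typecheck and rests on a containment that is false. After $\sG[\cI_\cU(U)\subseteq(\cI_\cU(U))_\epsilon]^{-1}$ you are at $\sG(\cI_\cU(U))$, so the next map $\sG[(\cI_\cU(U))_\epsilon\subseteq\cI_\cU(U_\delta)]$ does not compose. More importantly, the containment $(\cI_\cU(U))_\epsilon\subseteq\cI_\cU(U_\delta)$ for some universal $\delta<\epsilon_0$ is the ``main obstacle'' you flag, and it genuinely fails: if $U$ is a tiny interval centred deep inside a cover intersection $(b_1,b_2)$, then $\cI_\cU(U_\delta)=(b_1,b_2)$ for all $\delta$ small enough, while $(\cI_\cU(U))_\epsilon=(b_1-\epsilon,b_2+\epsilon)$ already sticks out. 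No bound of the form $\delta=\max(0,\epsilon_0-m/2)$ will rescue this.

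The paper sidesteps both problems by never seeking a containment between $\cI_\cU(U)_\epsilon$ and $\cI_\cU(U_\epsilon)$. Instead it passes through their \emph{intersection} $N_{\cU,\epsilon}(U):=\cI_\cU(U_\epsilon)\cap\cI_\cU(U)_\epsilon$, which contains $\cI_\cU(U)$ and therefore shares the same critical values with $\cI_\cU(U)_\epsilon$; constructibility then makes $\sG[N_{\cU,\epsilon}(U)\subseteq\cI_\cU(U)_\epsilon]$ invertible, and the composite
\[
\cM_\cU(\sF)(U)\xrightarrow{\ \varphi_{\cI_\cU(U)}\ }\sG(\cI_\cU(U)_\epsilon)\xrightarrow{\ \cong^{-1}\ }\sG(N_{\cU,\epsilon}(U))\longrightarrow\sG(\cI_\cU(U_\epsilon))=\cM_\cU(\sG)(U_\epsilon)
\]
gives the $\epsilon$-interleaving directly, uniformly for all $U$, with no case split. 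Your inverse step $\sG[\cI_\cU(U)\subseteq(\cI_\cU(U))_\epsilon]^{-1}$ is essentially the same idea; had you followed it with the honest inclusion $\cI_\cU(U)\subseteq\cI_\cU(U_\epsilon)$ (rather than the spurious $\delta$-containment), you would have recovered the paper's argument.

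Your treatment of the ``moreover'' clause is correct and is exactly what the paper does (invoking \cite[Theorem~4.4]{deSilvaMunchPatel2016}).
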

 \begin{proof}
 Suppose $\varphi_U:\sF(U)\rightarrow \sG(U_\varepsilon)$ and $\psi_U:\sG(U)\rightarrow\sF(U_\varepsilon)$ give an $\varepsilon$-interleaving of $\sF$ and $\sG$. Recall that
 \[
 \cM_\cU(\sF)(U)=\sF(\cI_\cU(U)).
 \]
 Then
 \[
 \varphi_{\cI_\cU(U)}:\cM_\cU(\sF)(U)\rightarrow \sG(\cI_\cU(U)_\varepsilon).
 \]
 In general, this does not give us an $\varepsilon$-interleaving of $\cM_\cU(\sF)$ and $\cM_\cU(\sG)$, because $\cI_\cU(U)_\varepsilon\neq \cI_\cU(U_\varepsilon)$. However, we will proceed by showing that each of these sets contain the same set of critical values.
 
Following the definition of $\cI_\cU$, we see that for each $U\in \Int$, $\cI_\cU(U)$ is an open interval in $\R$, with boundary points contained in $B$. Therefore $\cI_\cU(U)\cap S\subset \cI_\cU(U)_\epsilon\cap S$. If the inclusion is not an equality, then there must exist $s\in S$ such that $s\in \cI_\cU(U)_\epsilon$ and $s\notin \cI_\cU(U)$. In other words, if $\cI_\cU(U)\cap S\subsetneq \cI_\cU(U)_\epsilon\cap S$, then there exists $s\in S$ and $b\in B$ such that $|s-b|<\epsilon$. 

Define
 \[
 N_{\cU,\varepsilon}(U):=\cI_\cU(U_\varepsilon)\cap \cI_\cU(U)_\varepsilon.
 \]
By the arguments above, if $ \varepsilon<\min\{|s-b|:s\in S,b\in B\}$, then $\cI_\cU(U)\cap S= \cI_\cU(U)_\epsilon\cap S$. It follows that $N_{\cU,\varepsilon}(U)\cap S = \cI_\cU(U)\cap S = \cI_\cU(U)_\varepsilon\cap S$, because $\cI_\cU(U)\subset \cI_\cU(U_\epsilon)$. By the definition of constructibility, this implies that the natural extension map $\sG[N_{\cU,\varepsilon}(U)\subset \cI_\cU(U)_\varepsilon]$ (denoted by $e$ for notational brevity)  
 \[
 \sG(N_{\cU,\varepsilon}(U))\xrightarrow{\quad e \quad} \sG(\cI_\cU(U)_\varepsilon)
 \]
is an isomorphism, and therefore is invertible. The composition
 \[
 \cM_\cU(\sF)(U)\xrightarrow{\varphi}\sG(\cI_\cU(U)_\varepsilon)\xrightarrow{e^{-1}}  \sG(N_{\cU,\varepsilon}(U))\rightarrow \sG(\cI_\cU(U_\varepsilon))= \cM_\cU(\sG)(U_\varepsilon)
 \]
 gives an $\varepsilon$-interleaving of $\cM_\cU(\sF)$ and $\cM_\cU(\sG)$, because each map in the composition is natural with respect to inclusions. Therefore 
 \[
     d_I(\cM_\cU(\sF),\cM_\cU(\sG))<d_I(\sF,\sG).
 \]
When $\sF$ is the Reeb cosheaf of $(\X,f)$ and $\sG$ is the Reeb cosheaf of $(\X,g)$, the second statement of the theorem is a direct consequence of the above inequality and \cite[Theorem 4.4]{deSilvaMunchPatel2016}. 
\qed
 \end{proof}


\section{Discussion}
\label{sec:discussion}

In this paper, we work with a categorification of the Reeb graph~\citep{deSilvaMunchPatel2016} and introduce a categorified version of the mapper construction. 
This categorification provides the framework for using cosheaf theory and interleaving distances to study convergence and stability for mapper constructions applied to point cloud data. In this setting, the Reeb graph of a constructible $\R$-space is realized as the display locale of a constructible cosheaf (which we refer to as the Reeb cosheaf, following \cite{deSilvaMunchPatel2016}). In Section \ref{sec:CategorifiedMapper}, we define a mapper functor from the category of cosheaves to the category of constructible cosheaves, giving a category theoretic interpretation of the mapper construction. We then define the \emph{enhanced mapper graph} to be the display locale of the mapper functor applied to the Reeb cosheaf. 
We give an explicit geometric realization of the display locale as the quotient of a disjoint union of closed intervals, as illustrated in Figure \ref{fig:discussion}.  
In Section \ref{sec:Model}, we give a model for randomly sampling points from a probability density function concentrated on a constructible $\R$-space. After applying kernel density estimates, we consider an enhanced mapper graph generated by the random data. The main result of the paper, Theorem \ref{thm:main}, then gives (with high probability) a bound on the Reeb distance between the Reeb graph and the enhanced mapper graph generated by a random sample of points. 

\begin{figure}[!ht]
\begin{center}
\includegraphics[width=0.80\textwidth]{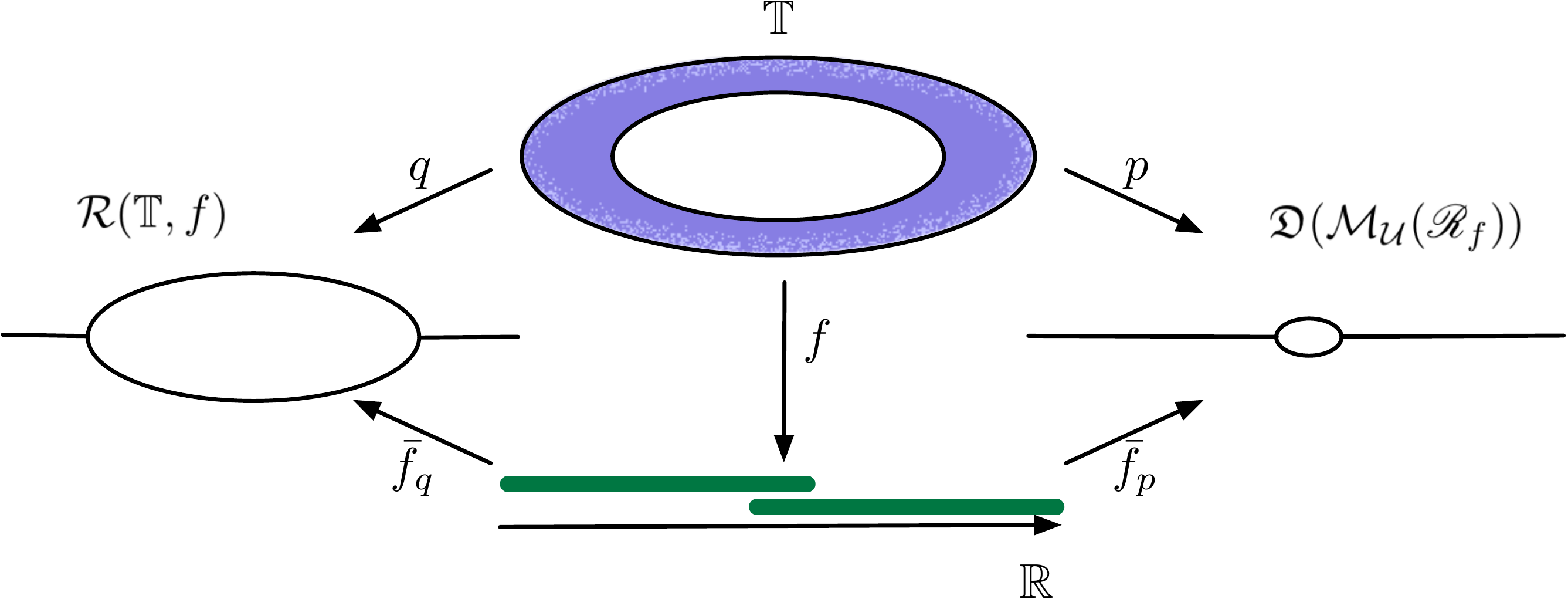} 
\caption{An example of the enhanced mapper graph $\mathfrak{D}(\cM_\cU(\sR_f))$ and the Reeb graph $\cR(\mathbb{T},f)$ of the height function $f$ on the torus $\mathbb{T}$, with an open cover $\cU$ of $f(\mathbb{T})$ consisting of two open intervals. The maps $q$ and $\overline{f}_q$ are the natural quotient factorization of $f$ obtained from the definition of the Reeb graph. Similarly, $p$ and $\overline{f}_p$ are the quotient map and factorization of $f$ obtained from the definition of the enhanced mapper graph.}
\label{fig:discussion}
\end{center}
\end{figure}

\para{Refinement to classic mapper graph.}
The enhanced mapper graph suggests a few refinements to the classic mapper construction. Firstly, rather than an open cover $\cU$ of $f(\X)$ (the image of the constructible $\R$-space $\X$ in $\R$), it is more natural from the enhanced mapper perspective to start with a finite subset $\R_0$ of $\R$. From this finite subset, the enhanced mapper graph can be computed by first producing a finite disjoint union of closed intervals, with each interval associated to a connected component of the complement of $\R_0$. Then, by prescribing attaching maps on boundary points of the disjoint union of closed intervals, one can obtain a combinatorial description of the enhanced mapper graph as a graph with vertices labeled with real numbers. The enhanced mapper graph then has the structure of a stratified cover of $f(\X)$, the image of the constructible $\R$-space $\X$ in $\R$. As such, the enhanced mapper graph contains more information than the classic mapper graph. Specifically, edges of the enhanced mapper graph have a naturally defined length which captures geometric information about the underlying constructible $\R$-space. Therefore, the enhanced mapper graph is naturally geometric, meaning that it comes equipped with a map to $\R$. 


\para{Variations of mapper graphs.}
We return to an in-depth discussion among variations of classic mapper graphs. 
As illustrated in Figure~\ref{fig:torus-enhanced-mapper} for the $\Rspace$ $(\mathbb{T}, f)$, that is, a torus equipped with a height function,  the enhanced mapper graphs (g), geometric mapper graphs (i) studied by~\cite{MunchWang2016}, and multinerve mapper graphs (j), have all been shown to be interleaved with Reeb graphs (b)~\citep{MunchWang2016,CarriereOudot2018}. To further illustrate the subtle differences among the enhanced, geometric, mutinerve and classic mapper graphs, we give additional examples in Figure~\ref{fig:enhanced-mapper-revisited} and Figure~\ref{fig:segments-enhanced-mapper}. 
In certain scenarios, some of these constructions appear to be identical or very similar to each other. 
We would like to understand the information content associated with the above variants of mapper graphs, all of which are used as approximations of the Reeb graph of a constructible $\R$-space. 
As illustrated in Figure~\ref{fig:torus-enhanced-mapper}, given an enhanced mapper graph (g) and an open cover (c), one can recover the the multinerve mapper graph (j), the geometric mapper graph (i), and the classic mapper graph (k). 
In future work, it would be interesting to quantify precisely the reconstruction ordering of these variants with and without any knowledge of the open cover.

In order to study convergence and stability of each variation of the mapper graph, it is necessary to assign function values to vertices of the graph. For the classic mapper graph or multinerve mapper graph, each vertex can be assigned, for instance, the value of the midpoint of a corresponding interval in $\R$. However, the display locale of a cosheaf over $\R$ admits a natural projection onto the real line, making a choice of function values unnecessary for the enhanced mapper graph. For this reason, we view the enhanced mapper graph as a natural variation of the mapper graph, well-suited for studying stability and convergence, with a natural interpretation in terms of cosheaf theory. 

\begin{figure}[!ht]
\begin{center}
\includegraphics[width=0.80\textwidth]{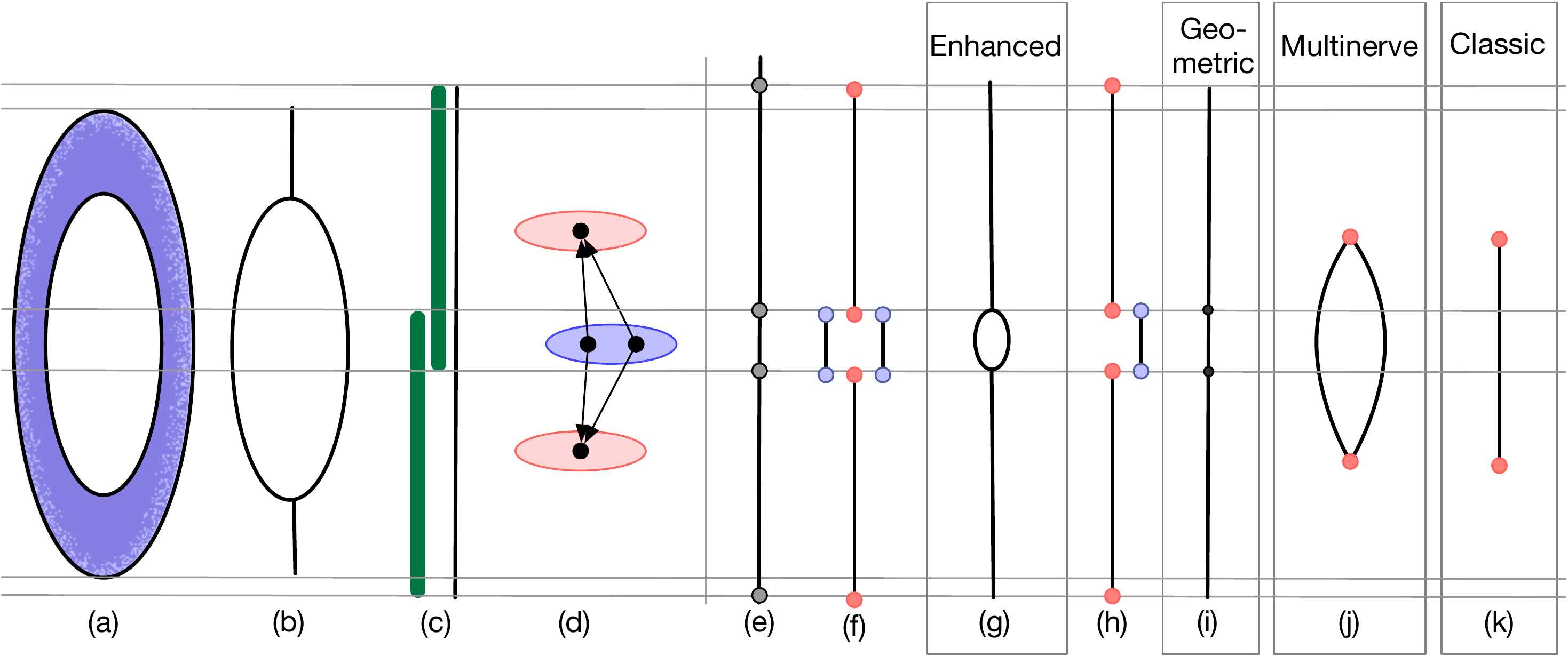} 
\caption{Variations of mapper graphs for the height function on a torus. (a) Torus with a height function. (b) Reeb graph. (c) Nice cover. (d) Visualization of the mapper cosheaf. (e) Stratification of $\R$. (f) Disjoint union of closed intervals, $\widetilde{\mathfrak{D}}(\cM_\cU(\sR_f))$, with quotient isomorphic to the enhanced mapper graph. (g) Enhanced mapper graph, $\mathfrak{D}(\cM_\cU(\sR_f))$. 
     (h) Disjoint union of closed intervals used to construct geometric mapper graph~\citep{MunchWang2016}. 
     (i) Geometric mapper graph. (j) Multinerve mapper graph. (k) Classic mapper graph.}
\label{fig:torus-enhanced-mapper}
\end{center}
\end{figure}

\begin{figure}[!ht]
\begin{center}
\includegraphics[width=0.85\textwidth]{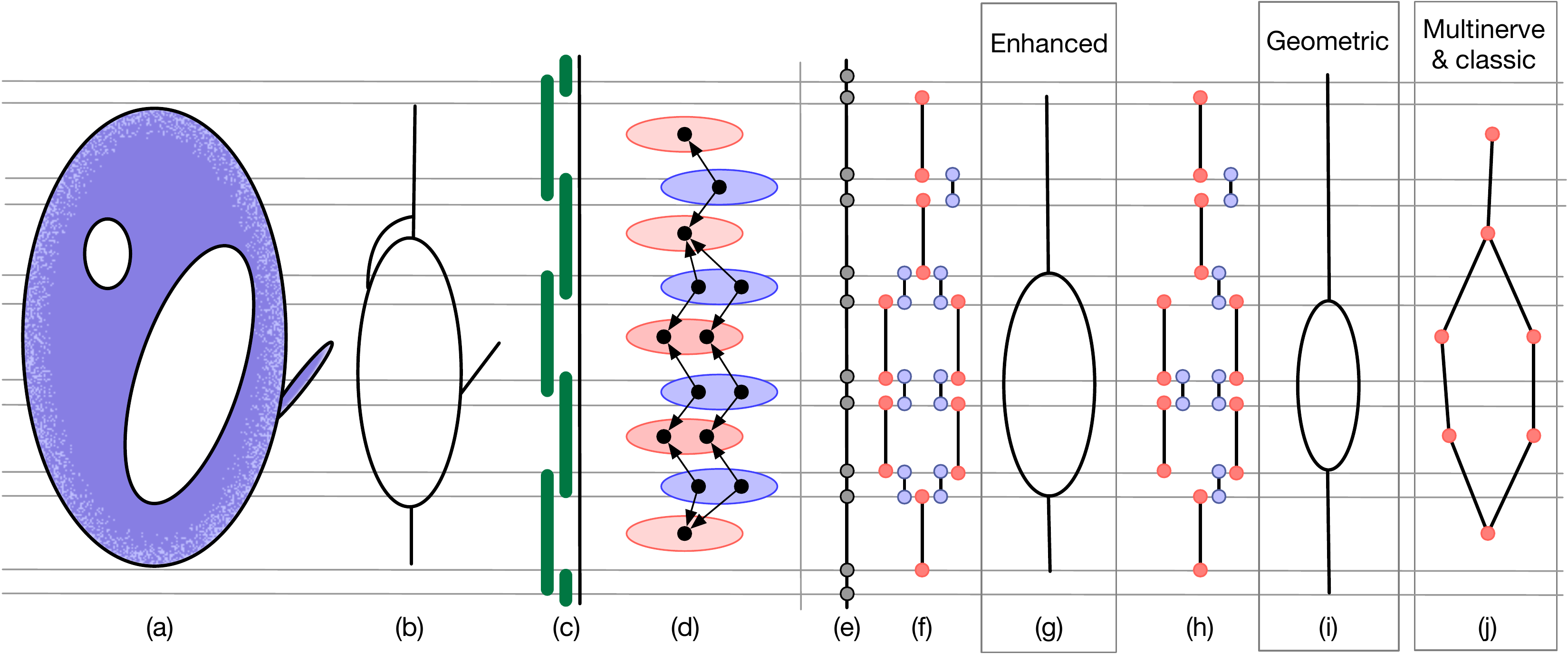} 
\caption{A return to the example illustrated in Figure~\ref{fig:enhanced-mapper}. Variations of mapper graphs of a height function on a topological space. (a) A topological space with a height function. (b) Reeb graph. (c) Nice cover. (d) Visualization of the mapper cosheaf. (e) Stratification of $\R$. (f) Disjoint union of closed intervals with quotient isomorphic to the enhanced mapper graph. (g) Enhanced mapper graph. 
     (h) Disjoint union of closed intervals used to construct geometric mapper graph~\citep{MunchWang2016}. 
     (i) Geometric mapper graph. (j) Multinerve and classic mapper graph.}
\label{fig:enhanced-mapper-revisited}
\end{center}
\end{figure}

\begin{figure}[!ht]
\begin{center}
\includegraphics[width=0.75\textwidth]{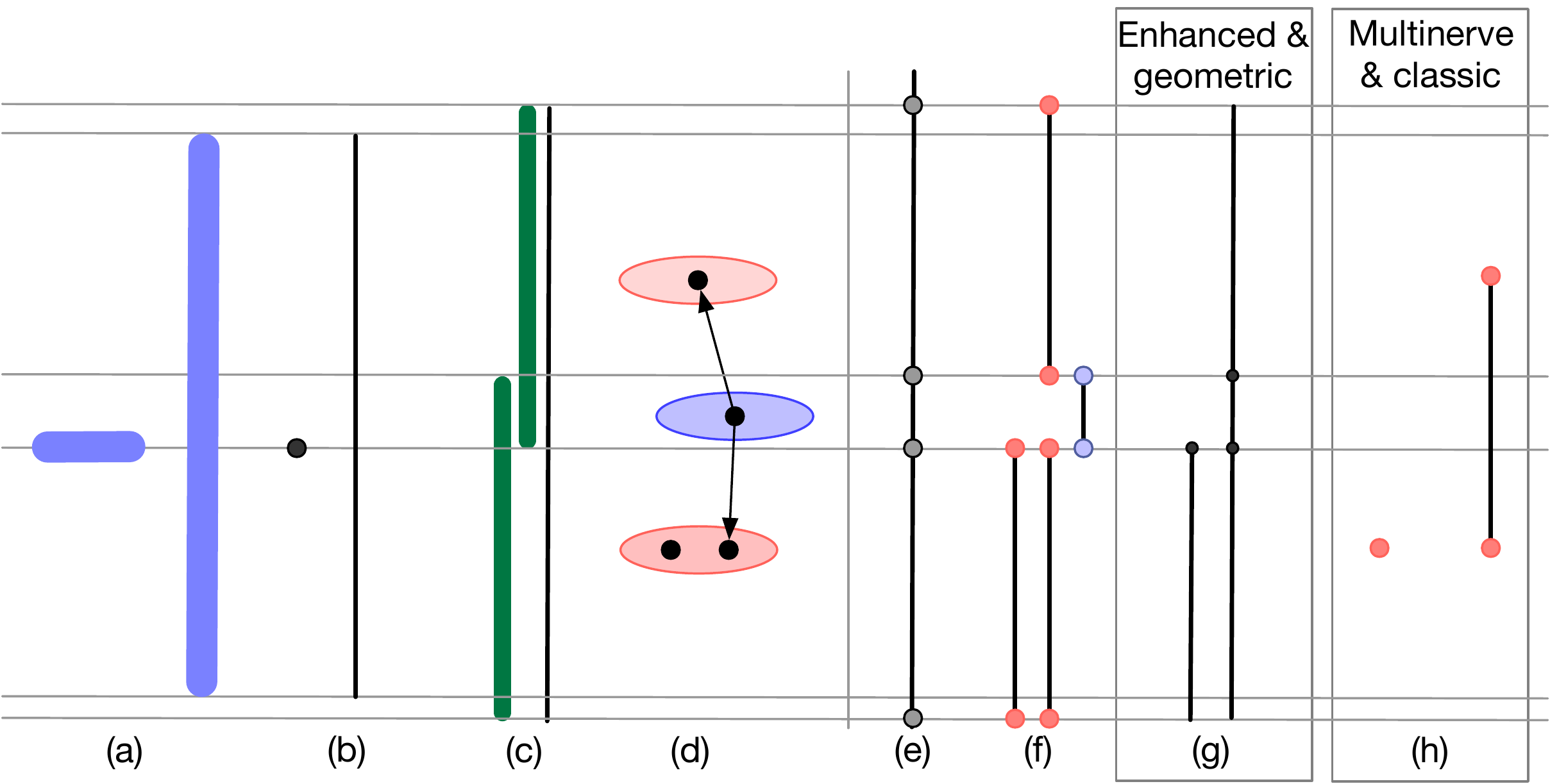} 
\caption{Variations of mapper graphs of a height function on a topological space consisting of two line segments. (a) A topological space consisting of two line segments. (b) Reeb graph. (c) Nice cover. (d) Visualization of the mapper cosheaf. (e) Stratification of $\R$. (f) Disjoint union of closed intervals with quotient isomorphic to the enhanced mapper graph. (g) Enhanced and geometric mapper graph. 
     (i) Multinerve and classic mapper graph.}
\label{fig:segments-enhanced-mapper}
\end{center}
\end{figure}

\para{Multidimensional setting and parameter tuning.}
 It is natural to extend the enhanced mapper graph (and more generally the categorification of mapper graphs) to multidimensional Reeb spaces and multi-parameter mapper through studying constructible cosheaves and stratified covers of $\R^N$, for $N>1$. We would also like to study the behavior of the parameter $\delta_\cU$ for various constructible spaces and open covers. In general, this parameter can vanish for ``bad" choices of open cover $\cU$. It would be worthwhile to extend the results of this paper to obtain bounds on the interleaving distance when $\delta_\cU$ vanishes. In conclusion, we hope for the results of this paper to promote the utility of combining methods from statistics and sheaf theory for the purpose of analyzing algorithms in computational topology. 

\begin{acknowledgements} 
AB was supported in part by the European Union’s Horizon 2020 research and innovation programme under the Marie Sklodowska-Curie Grant Agreement No. 754411 and NSF IIS-1513616.
OB was supported in part by the Israel Science Foundation, Grant 1965/19.
BW was supported in part by NSF IIS-1513616 and DBI-1661375.
EM was supported in part by NSF CMMI-1800466, DMS-1800446, and  CCF-1907591. 
We would like to thank the Institute for Mathematics and its Applications for hosting a workshop titled \emph{Bridging Statistics and Sheaves} in May 2018, where this work was conceived.
\end{acknowledgements}

\section*{Conflict of interest}
The authors declare that they have no conflict of interest.

\appendix
\section{\myblue{Pseudocode for the Enhanced Mapper Graph Algorithm}}
\label{sec:pseudocode}

The following pseudocode (Algorithm~\ref{algorithm:enhanced-mapper}) outlines an algorithm for computing the enhanced mapper graph, which is stored as a graph $G=(F,E)$ with a vertex set $F$ and an edge set $E$, together with a real-valued function $f:F\rightarrow \R$.

The algorithm assumes that we are given sets $\pi_0(f^{-1}(U))$ (denoted by $\Sigma$ in the pseudocode) and set maps $\pi_0(f^{-1}(U))\rightarrow \pi_0(f^{-1}(V))$ (denoted by $\rho$ in the pseudocode) for various $U \subset V\subset \R$. 
In other words, the algorithm assumes that there is an oracle (referred to as a \emph{set oracle}) that takes as input an inverse mapping of an interval and returns its corresponding set of path-connected components. 
It also assumes that there is a \emph{set-map oracle} that keeps tracks of set maps between a pair of path-connected components (each component is denoted by $s$ in the pseudocode). 
In Section \ref{sec:Model}, we give a statistical approach for computing such sets and set maps through kernel density estimates. 

In Algorithm~\ref{algorithm:enhanced-mapper}, let $\mathcal{U} = \{U_{i}\}_{i \in A}$ denote a finite set of pairwise intersecting open intervals.
For simplicity, suppose the index set $A \subset \mathbb{Z}$ contains consecutive integers.  
That is, for each interval $U_{i}:=(u_{i}^-,u_{i}^+)$ (for some $i \in A$), we have ~$u_{i}^-<u_{i-1}^+<u_{i+1}^-<u_i^+<u_{i+1}^+$ (assuming $i-1, i+1 \in A$). 
For each interval $U_i$, $\Sigma_i := \pi_0(f^{-1}(U_i))$ denotes the set of path-connected components. 
For each path-connected component $s \in \Sigma_i$, the pairs $(s,+)\in \Sigma_i\times\{+,-\}$ and $(s,-)\in \Sigma_i\times\{+,-\}$ represent the two vertices associated to the edge in the enhanced mapper graph which corresponds to $s$. 
Similarly, for each path-connected component $t \in \Sigma_{(i,i+1)}$, the pairs $(\rho_i^-(t),+)$ and $(\rho_i^+(t),-)$ represent the two vertices associated to the edge  in the enhanced mapper graph which corresponds to $t$. 

For clarity, Figure~\ref{fig:pseudocode} illustrates notations used in the pseudocode of Algorithm~\ref{algorithm:enhanced-mapper}. 
It is based on a zoomed view of Figure~\ref{fig:enhanced-mapper}(c)-(f). 
The maps $\rho_i^-$ and $\rho_i^+$ define how the red vertices and blue vertices (as end points of intervals) are glued together to form an enhanced mapper graph. 
In this particular example,  $(\rho^-_i(t), +)$ (a blue vertex) matches with $(s,+)$ (a red vertex), due to the fact that $\rho_i^-(t)=s$.


\begin{figure}[!ht]
\begin{center}
\includegraphics[width=0.85\textwidth]{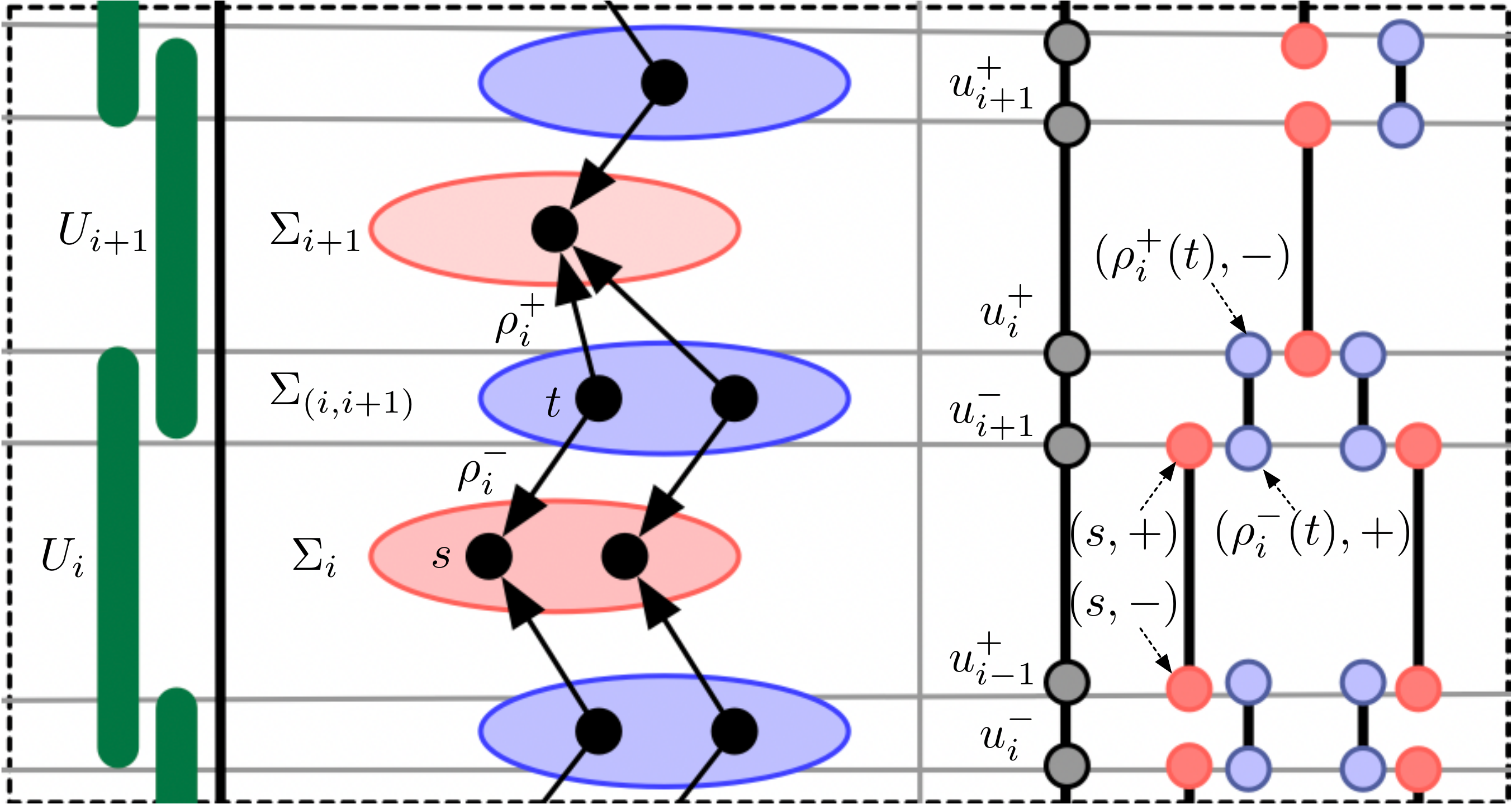} 
\caption{An illustration of notations used in the pseudocode of Algorithm~\ref{algorithm:enhanced-mapper}.}
\label{fig:pseudocode}
\end{center}
\end{figure}

\begin{algorithm}[H]
\label{algorithm:enhanced-mapper}
\caption{Compute an enhanced mapper graph with oracles. }

\Input{ 
\begin{itemize}
\item {A finite set of pairwise intersecting open intervals}: $\{U_{i}:=(u_{i}^-,u_{i}^+)\}_{i \in A}$ 
\item {For each interval, a set returned by a set oracle}: 
\begin{itemize}
\item For each $U_i$, a set  $\Sigma_i := \pi_0(f^{-1}(U_i))$  
\item For each $(U_i, U_{i+1})$, a set $\Sigma_{(i,i+1)}:=\pi_0(f^{-1}(U_i \cap U_{i+1}))$
\end{itemize}
\item {For each pair of intervals, a set map returned by a  set-map oracle}:\\
For each $(U_i, U_{i+1})$, set maps   
$$
\rho^-_{i}:\Sigma_{(i,i+1)}\rightarrow \Sigma_i, 
 \quad  \rho^+_{i}: \Sigma_{(i,i+1)}\rightarrow \Sigma_{i+1}.
$$ 
\end{itemize}} 

\Output{ 
\begin{itemize}
\item A graph $G=(F,E)$ with a vertex set $F$ and an edge set $E \subseteq F \times F$ 
\item A function $f:F\rightarrow \R$
\end{itemize}
}

Initialize $F=\emptyset$ and $E=\emptyset$ \\

\For{$i\in A$}{
Set $\Sigma^+_i:=\Sigma_i\times\{+\}$ \\
Set $\Sigma^-_i:=\Sigma_i\times\{-\}$\\
$F\leftarrow F\sqcup \Sigma^-_i\sqcup \Sigma^+_i$\\
	\For {$s\in\Sigma_i$}{
	$E\leftarrow E\sqcup ((s,-),(s,+))$
	\begin{eqnarray*}
	    f((s,+))&:=\begin{cases}
		    u_{i+1}^-&\text{ if }i+1\in  A\\
		    u_i^+&\text{ if }i+1\notin A,
		    \end{cases}\\
	    f((s,-))&:=\begin{cases}
		    u_{i-1}^+&\text{ if }i-1\in A\\
		    u_i^-&\text{ if }i-1\notin A,
		    \end{cases}
	\end{eqnarray*}}
}
\For {$(i,i+1) \in A \times A$}{
	\For {$t \in \Sigma_{(i,i+1)}$}{
		$E \leftarrow E \sqcup ((\rho_i^-(t),+),(\rho_i^+(t),-))$
	}
}
\end{algorithm}

\clearpage
\bibliography{mapper}

\end{document}